
\documentclass[12pt]{amsart}

\usepackage{a4wide}
\pagestyle{plain}
\usepackage[normalem]{ulem}
\usepackage{amsmath}
\usepackage{amssymb,amsthm}
\usepackage{graphicx}

\usepackage{enumerate}

\usepackage{tikz}

\usepackage{hyperref}

\usepackage{color}
\definecolor{darkblue}{rgb}{0,0,0.8}

\linespread{1.1}

\newtheorem{propo}{Proposition}[section]

\newtheorem{lemma}[propo]{Lemma}
\newtheorem{corol}[propo]{Corollary}

\newtheorem{theo}[propo]{Theorem}

\newtheorem{prop}[propo]{Proposition}

\newcommand\Cay{{\rm Cay}}

\newcommand\PSL{{\rm PSL}}

\newcommand\PG{{\rm PG}}
\newcommand\PGL{{\rm PGL}}
\newcommand\AGL{{\rm AGL}}
\newcommand\PGammaL{{\rm P}\Gamma{\rm L}}
\newcommand\GL{{\rm GL}}
\newcommand\GammaL{\Gamma{\rm L}}

\newcommand\SigmaL{\Sigma{\rm L}}
\newcommand\PSigmaL{{\rm P}\Sigma{\rm L}}
\newcommand\GF{{\rm GF}}

\newcommand\Sym{{\rm Sym}}

\newcommand\Aut{{\rm Aut}}

\newcommand\K{{\rm K}}

\newcommand\Ha{{\rm H}}

\begin{document}
\title{Arc-transitive bicirculants }

\thanks{The second author was supported by the Australian Research Council grant DP150101066 while  the third author is supported by NSFC (11661039,12061034) and NSF of Jiangxi (2018ACB21001, 20192ACBL21007,GJJ190273)}

\author[A. Devillers]{Alice Devillers}
\address{Alice Devillers\\Centre for the Mathematics of Symmetry and Computation\\ Department of Mathematics and Statistics\\
The University of Western Australia\\
Crawley, WA 6009, Australia} \email{alice.devillers@uwa.edu.au}
\author[M. Giudici]{Michael Giudici}
\address{Michael Giudici\\Centre for the Mathematics of Symmetry and Computation\\  Department of Mathematics and Statistics\\
The University of Western Australia\\
Crawley, WA 6009, Australia} \email{michael.giudici@uwa.edu.au}
\author[W. Jin]{Wei Jin}
\address{Wei Jin\\School of Statistics\\
Jiangxi University of Finance and Economics\\
 Nanchang, Jiangxi, 330013, P.R.China}
\address{School of Mathematics and Statistics\\
Central South University\\
Changsha, Hunan, 410075, P.R.China}
 \email{jinweipei82@163.com}


\begin{abstract}
In this paper,  we characterise  the family of finite arc-transitive bicirculants.
We show that every finite  arc-transitive  bicirculant
is a normal $r$-cover of an arc-transitive graph that lies in one of  eight infinite families or is one of seven sporadic arc-transitive graphs.
Moreover, each of these ``basic'' graphs  is either an arc-transitive bicirculant or an arc-transitive circulant, and  each graph in the latter case has an arc-transitive bicirculant normal $r$-cover for some integer $r$.
\end{abstract}

\maketitle

\vspace{2mm}

 \hspace{-17pt}{\bf Keywords:}  bicirculant, arc-transitive graph, permutation group.

 \hspace{-17pt}{\bf Math. Subj. Class.:} 05E18; 20B25

\section{Introduction}

A graph with $2n$ vertices   is called a \emph{bicirculant} if it admits an automorphism $g$ of order $n$ with exactly two cycles of
length $n$. The class of bicirculants includes Cayley graphs of dihedral groups, the Generalised Petersen graphs \cite{FGW-1971}, Rose-Window graphs \cite{Wilson} and Taba\v{c}jn graphs \cite{AHKOS-2015}.
The last  three families are particularly nice as the two orbits of $\langle g\rangle$ are cycles in the graph.
Arc-transitive bicirculants have been classified for small  valencies, for example valency three \cite{FGW-1971,MP-2000,P-2007}, valency four \cite{KKM-2010,KKM,KKMW-2012} and valency five  \cite{AHK-2015,AHKOS-2015}. Moreover, bicirculants of valency 6 were recently investigated \cite{JMSV}, while the automorphisms of bicirculants on $2p$ vertices for $p$ a prime are well understood \cite{MMSF-2007}.

We call a graph with $n$ vertices  a {\it circulant} if it has an automorphism $g$ that is an  $n$-cycle.
Let $\Gamma $ be a connected arc-transitive circulant which is not a complete graph. Then
Kov\'acs \cite{Kovacs-2004} and Li \cite{LCH-circulant-2005} proved that
either $\langle g\rangle $ is a normal subgroup of $ \Aut(\Gamma)$, or $\Gamma $ is the lexicographic product of
an  arc-transitive circulant   and  an empty graph,
or $\Gamma $ is obtained by  the lexicographic product of
an  arc-transitive circulant $\Sigma$  and  an empty graph and then removing some copies of $\Sigma$.
In this paper, we continue the study of arc-transitive bicirculants and give a characterisation for these graphs  which is of a similar style to  the result of Kov\'acs  and Li.

Let $\Gamma$ be a $G$-vertex-transitive graph. Let $\mathcal{B}=\{B_1,\ldots,B_m\}$ be a $G$-invariant partition of the vertex set, that is, for each $B_i$ and each $g\in G$, either $B_i^g\cap B_i= \varnothing $ or $B_i^g=B_i$.
Then  the \emph{quotient graph} $\Gamma_{\mathcal{B}}$
of $\Gamma$ induced on $\mathcal{B}$ is the graph with vertex set $\mathcal{B}$, and
$B_i,B_j$ are adjacent if there exist $x\in B_i$ and $y\in B_j$ such that $x,y$ are adjacent in $\Gamma$.
The graph  $\Gamma$ is said to be an
\emph{$r$-cover} of $\Gamma_{\mathcal{B}}$ if for each edge $\{B_i,B_j\}$ of
$\Gamma_{\mathcal{B}}$ and each $v\in B_i$, we have $|\Gamma(v)\cap B_j|=r$. In particular, if $r=1$, then
$\Gamma$ is called a \emph{cover} of $\Gamma_{\mathcal{B}}$.
Moreover, if $\mathcal{B}$ is the set of orbits of an intransitive normal subgroup of $G$, then
$\Gamma$ is called a \emph{normal $r$-cover} of $\Gamma_{\mathcal{B}}$.

Our main theorem  characterises the family of  arc-transitive bicirculants. The definitions of the graphs arising in Theorem  \ref{bicirculant-reduction-th2} will be given in Subsection 2.1.

\begin{theo}\label{bicirculant-reduction-th2}
Every finite connected arc-transitive  bicirculant which is not equal to $\K_2$
is a normal $r$-cover of
one of the following   graphs:
 \begin{enumerate}[{\rm (a)}]
\item    $\K_{n,n}$, $n\geqslant 2$;
\item   $\K_n$, $ \K_{n[2]}$,  $\K_{n,n}-n\K_2$, $n\geqslant 3$;
\item   $G(2p,e)$, where $p$ is a prime and  $e>1$ divides $p-1$;
\item   $B(\PG(n,q))$, $B'(\PG(n,q))$ where $q$ is a prime power and $n\geqslant 2$;
\item  $\Cay(p,e)$,  where  $p$ is a prime and $e$ is an even integer dividing  $p-1$;
\item   $B'(H(11))$;
\item Petersen graph, $\Ha(2,4)$, Clebsch graph, and their  complements.
\end{enumerate}
Moreover, each of the graphs in $(a)-(g)$ is either an arc-transitive bicirculant  or an arc-transitive circulant  that has
an   arc-transitive bicirculant normal $r$-cover.
\end{theo}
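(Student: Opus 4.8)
The plan is to argue by induction on $n$, reducing an arbitrary arc-transitive bicirculant to a \emph{basic} one by iterated normal quotients and then classifying the basic graphs. Throughout let $\Gamma$ be a connected arc-transitive bicirculant on $2n$ vertices with $\Gamma\ne\K_2$, set $G=\Aut(\Gamma)$, and let $M=\langle g\rangle\le G$ be the prescribed cyclic group of order $n$, semiregular with two orbits $U,W$ of length $n$; since $\Gamma$ is connected and arc-transitive, $G$ is vertex- and edge-transitive. Call $\Gamma$ \emph{basic} if $G$ has no intransitive normal subgroup with at least three orbits.

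First I would establish the reduction. Let $1\ne N\trianglelefteq G$ be intransitive with orbit set $\mathcal B$, and form the normal quotient $\Gamma_{\mathcal B}$. By the normal quotient theory for arc-transitive graphs, $G/N$ acts arc-transitively on $\Gamma_{\mathcal B}$ and, $\Gamma$ being connected and arc-transitive, $\Gamma$ is a genuine normal $r$-cover of $\Gamma_{\mathcal B}$. The key observation is that $\Gamma_{\mathcal B}$ is again a bicirculant or a circulant: the group $\bar M$ induced by $M$ on $\mathcal B$ is cyclic and faithful, and it has at most two orbits because already $M$ has two orbits on vertices; when it has two orbits these are the images of the equal-length sets $U,W$ and so have equal length, whence $\bar M$, being faithful cyclic with equal orbit lengths, is semiregular. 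Thus $\Gamma_{\mathcal B}$ is an arc-transitive bicirculant (two $\bar M$-orbits, when $\mathcal B$ refines $\{U,W\}$) or an arc-transitive circulant (one $\bar M$-orbit, when the blocks straddle $\{U,W\}$). In the first case induction applies; in the second the theorems of Kov\'acs \cite{Kovacs-2004} and Li \cite{LCH-circulant-2005} describe $\Gamma_{\mathcal B}$ and place it in the list. Composition is formal: if $\Gamma$ is a normal $r$-cover of $\Gamma_{\mathcal B}$ via $N$ and $\Gamma_{\mathcal B}$ is a normal $s$-cover of some $\Sigma$ in the list via an intransitive $\bar K\trianglelefteq G/N$, then the preimage $K$ of $\bar K$ in $G$ is an intransitive normal subgroup exhibiting $\Gamma$ as a normal $rs$-cover of $\Sigma$. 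Hence it suffices to treat the basic case.

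Now suppose $\Gamma$ is basic. Since the orbit set of any intransitive normal subgroup is an $M$-invariant partition, and the only nontrivial $M$-invariant $2$-partition is $\{U,W\}$, in the basic case every intransitive normal subgroup has exactly the two orbits $U,W$; I would split according to whether $\{U,W\}$ is $G$-invariant. If it is, then by connectivity and edge-transitivity $\Gamma$ is bipartite with parts $U,W$, so $\Gamma$ is an arc-transitive bi-Cayley graph over $\mathbb Z_n$; one studies the action on $U$ of the index-two subgroup $G^+$ fixing the parts, which contains the regular cyclic group $M|_U\cong\mathbb Z_n$, using the classification of (bi-)quasiprimitive groups and of groups with a regular cyclic subgroup (both resting on CFSG). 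This yields $\K_{n,n}$, the projective incidence graphs $B(\PG(n,q))$ and $B'(\PG(n,q))$ coming from a Singer cycle, and $B'(H(11))$. If $\{U,W\}$ is not $G$-invariant, then $G$ has no intransitive normal subgroup at all, that is, $G$ is quasiprimitive on the $2n$ vertices; here the decisive constraint is that $G$ contains the semiregular cyclic subgroup $M$ of order $n=|V\Gamma|/2$, equivalently an automorphism with exactly two cycles, each of length $n$. Applying the O'Nan--Scott description of quasiprimitive groups together with the CFSG-based classifications of $2$-transitive groups, of primitive rank-$3$ groups, and of primitive groups containing so large a cyclic semiregular subgroup, leaves only affine and almost simple possibilities, giving $\K_n$, $\K_{n[2]}$, $\K_{n,n}-n\K_2$, the Frobenius-type graphs $\Cay(p,e)$ and $G(2p,e)$, and the rank-$3$ graphs (the Petersen graph, $\Ha(2,4)$, the Clebsch graph and their complements).

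The reduction step is essentially formal once $\bar M$ is seen to remain cyclic, faithful and semiregular with at most two orbits, so the heart of the argument, and its main obstacle, is the basic classification. The genuinely difficult part is the quasiprimitive almost simple and affine cases: one must match the O'Nan--Scott type of $G$ against the strong arithmetic and structural constraint imposed by an automorphism having exactly two cycles, each of length $|V\Gamma|/2$, to eliminate all but the listed families, and then verify that each surviving family is precisely one of the graphs in $(a)$--$(g)$; the bipartite sub-case additionally requires identifying the incidence geometries of $\PG(n,q)$ and the Hadamard design on $11$ points. I expect this case-by-case elimination, rather than the normal-quotient machinery, to absorb almost all of the work.
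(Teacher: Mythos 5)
Your overall strategy is the same as the paper's (reduce by normal quotients to a ``basic'' case where the group is quasiprimitive or bi-quasiprimitive, then classify via Jones--Li and M\"uller), but your basic-case analysis rests on a false claim: that the only nontrivial $M$-invariant $2$-partition of the vertex set is $\{U,W\}$. This fails whenever $n$ is even. If $g$ generates $M$ with orbits $U=\{u_0,\ldots,u_{n-1}\}$ and $W=\{w_0,\ldots,w_{n-1}\}$, then the partition with parts $\{u_{2i},w_{2i}\}_i$ and $\{u_{2i+1},w_{2i+1}\}_i$ is $M$-invariant and distinct from $\{U,W\}$, and configurations of exactly this shape occur in nature: in the cube $\K_{4,4}-4\K_2=GP(4,1)$ (and likewise in $GP(8,3)$), the cyclic semiregular group has as its orbits the two rim cycles, which are \emph{not} the bipartite halves, and there is a bi-quasiprimitive arc-transitive $G\cong S_4$ whose bipartition straddles the $M$-orbits. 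Consequently your dichotomy ``either $\{U,W\}$ is $G$-invariant, or $G$ is quasiprimitive'' silently discards the entire bi-quasiprimitive case in which the bipartite halves differ from the $H$-orbits. The paper devotes Proposition \ref{bicir-biqp-2} to precisely this case, applying Theorem \ref{bicirculant-primitive-2} to $G^+$ on each half (where the cyclic subgroup now has two \emph{equal} orbits per half), and the elimination work there --- the affine $\AGL(m,2)$ subcases, the standard double covers of the Petersen graph and its complement, the cycle $C_8$ --- is substantive, not vacuous. By luck the graphs emerging from that case ($\K_{n,n}$, $\K_{n,n}-n\K_2$, $B(\PG(d-1,q))$, $B'(\PG(d-1,q))$) are already on your list, but your proof as written asserts the case cannot arise, which is simply wrong.

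Two further gaps. First, when a quotient is a circulant you cannot ``place it in the list'' by citing Kov\'acs--Li: Theorem \ref{arccirculant-1} also yields lexicographic products $\Sigma[\overline{\K_b}]$ (and deleted products), which are arc-transitive circulants not among (a)--(g). The paper avoids this by choosing $N$ \emph{maximal} with at least three orbits, which forces $G/N$ to be faithful and quasiprimitive or bi-quasiprimitive on the quotient (faithfulness of your $\bar{M}$ is likewise not automatic for an arbitrary intransitive $N$; maximality is exactly what buys it), and then proving Proposition \ref{circ-biqp-th1}, which classifies the quasiprimitive and bi-quasiprimitive arc-transitive circulants --- this terminal step for circulant quotients is missing from your induction. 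Relatedly, your allocation of graphs to cases is off: $\Cay(p,e)$ has odd order and $G(2p,e)$ is bipartite (so its arc-transitive groups are never vertex-quasiprimitive), hence neither can arise in your quasiprimitive bicirculant case; they come from the circulant quotients and from Proposition \ref{bicirc-biqp-lem2} respectively. Second, your proposal never addresses the theorem's final assertion --- that each listed graph is an arc-transitive bicirculant, or an arc-transitive circulant with an arc-transitive bicirculant normal $r$-cover --- which the paper handles in Lemmas \ref{basiccirc-1} and \ref{basiccirc-3} together with the standard double cover construction of Lemma \ref{circ-sdc}.
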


Lemma \ref{basiccirc-1} determines exactly which graphs listed in Theorem \ref{bicirculant-reduction-th2} are circulants and Lemma \ref{basiccirc-3} determines which ones are bicirculants. Note that a graph can be both a circulant and a bicirculant.

A transitive permutation group $G\leqslant \Sym(\Omega)$ is said to be \emph{quasiprimitive}, if every non-trivial normal subgroup of $G$ is transitive on $\Omega$, while
$G$ is said to be \emph{bi-quasiprimitive} if every non-trivial normal subgroup of $G$ has at most two orbits on $\Omega$ and there exists one which has exactly two orbits on $\Omega$.
Quasiprimitivity  is a generalisation
of primitivity as every normal subgroup of a primitive group is transitive,
but there exist quasiprimitive groups which are not primitive. For more information about quasiprimitive and bi-quasiprimitive permutation groups, refer to \cite{Praeger-1993-onanscott,Praeger-2,Praeger-2003-biq}.

The proof of Theorem  \ref{bicirculant-reduction-th2} proceeds as follows.
Let $\Gamma$ be a connected $G$-arc-transitive  bicirculant  with at least three vertices (otherwise $\Gamma=\K_2$).
Let $N$ be a  normal subgroup of $G$ maximal with respect to having  at least $3$ orbits.
It is proved in Theorem 3.3 that:
$\Gamma$ is an $r$-cover of the  quotient graph induced by $N$  whose automorphism group contains  $G/N$,
this  quotient graph  is $G/N$-arc-transitive and  is either a circulant or a bicirculant, and $G/N$ is quasiprimitive or bi-quasiprimitive on its vertex set.
The families of  vertex quasiprimitive and  bi-quasiprimitive arc-transitive circulants  are  obtained  in   Proposition \ref{circ-biqp-th1}.
Then we determine precisely the  vertex quasiprimitive arc-transitive bicirculants in  Proposition \ref{bicirculant-quasiprimitive-th1}, and
a key part of the proof is M\"{u}ller's classification of primitive groups containing a cyclic subgroup with two orbits.
The vertex bi-quasiprimitive arc-transitive bicirculants are given in  Propositions \ref{bicirc-biqp-lem2} and \ref{bicir-biqp-2}.
Note that the graphs in Theorem  \ref{bicirculant-reduction-th2} do not necessarily  have an arc-transitive quasiprimitive or  bi-quasiprimitive
group of automorphisms for all values of the parameters.
Each of the graphs in $(a)-(g)$ is either an arc-transitive bicirculant or an arc-transitive circulant by Lemmas \ref{basiccirc-1} and \ref{basiccirc-3}.
Moreover, the ones that are circulants have a normal cover that is an arc-transitive bicirculant by Lemma \ref{circ-sdc}.

Finally, in Section 6,  we relate Theorem \ref{bicirculant-reduction-th2} to the classifications of arc-transitive bicirculants of valencies 3, 4 and 5.

\section{Preliminaries}

In this section, we give some definitions about groups and graphs and also prove some  results which will be used in the
following discussion.

\subsection{Groups and graphs}

All graphs in this paper are finite, simple, connected and undirected. For a graph $\Gamma$, we use  $V(\Gamma)$ and
$\Aut(\Gamma)$ to denote its \emph{vertex set}  and
\emph{automorphism group}, respectively. The size of the vertex set of a graph is said to be the \emph{order} of the graph. For the group theoretic terminology not defined here we refer the reader to \cite{Cameron-1,DM-1,Wielandt-book}.

We denote by $\mathbb{Z}_n$ the cyclic group of order $n$.
Let $G$ be a permutation group on a set $\Omega$ and $\alpha \in \Omega$. Denote by $G_\alpha$ the stabiliser of  $\alpha$ in $G$, that is, the subgroup of $G$ fixing the point $\alpha$. We say that $G$ is \emph{semiregular} on
$\Omega$ if $G_\alpha=1$ for every $\alpha\in \Omega$ and \emph{regular} if $G$ is transitive and semiregular.
An \emph{arc} of  a graph  is an ordered pair of adjacent vertices.  A graph $\Gamma$ is said to be  \emph{$G$-vertex-transitive} or \emph{$G$-arc-transitive} if  $G\leqslant \Aut(\Gamma)$ is
transitive on  the  set of vertices or on  the  set of arcs, respectively. Moreover, if $G= \Aut(\Gamma)$, then we drop the prefix ``$G$-" in the definitions.

Let  $G$ be a transitive permutation group on a set  $\Omega$ and
let $\mathcal{B}$ be a $G$-invariant partition of $\Omega$. Then $G$ induces a transitive permutation group on $\mathcal{B}$, denoted by $G^{\mathcal{B}}$. If the only possibilities for $\mathcal{B}$ are the partition into one part, or the partition into singletons then $G$ is called \emph{primitive}. The \emph{kernel} of $G$ on $\mathcal{B}$ is the normal subgroup of $G$ consisting of all elements that fix setwise each $B\in\mathcal{B}$. We call $\mathcal{B}$ \emph{maximal} if
$G^{\mathcal{B}}$ is primitive on  $ \mathcal{B}$.
Let $B$ be a non-empty
subset of $\Omega$. Then $B$ is called a \emph{block} of
$G$ if, for any $g\in G$, either $B^g=B$ or
$B^g\cap B =\varnothing$.
If $N$ is an intransitive normal subgroup of $G$, then each $N$-orbit is a block of $G$, and the
set of $N$-orbits $\mathcal{B}$   forms a
$G$-invariant partition of $\Omega$.
Let $\Gamma$ be a $G$-vertex-transitive graph and  let $\mathcal{B}$ be  the set of $N$-orbits on $V(\Gamma)$ for some normal subgroup $N$ of $G$,
then we denote $\Gamma_{\mathcal{B}}$ by $\Gamma_N$ and call $\Gamma_N$ a \emph{normal quotient graph}.

For a finite group $T$, and a subset $S$ of $T$ such that $1\notin
S$ and $S=S^{-1}$, the \emph{Cayley graph} $\Cay(T,S)$ of $T$ with
respect to $S$ is  the graph with vertex set $T$ and edge set
$\{\{g,sg\} \,|\,g\in T,s\in S\}$. In particular, $\Cay(T,S)$ is
connected if and only if $T=\langle S\rangle$.
The group $R(T) = \{
\sigma_t|t\in T\}$ of right multiplications $\sigma_t : x \mapsto xt$
is a subgroup of the automorphism group of $\Cay(T,S)$  and acts
regularly on the vertex set. Indeed, a graph is a Cayley graph if and only if it admits a regular group of automorphisms. We note that circulants are precisely the Cayley graphs for cyclic groups.

A graph $\Gamma$ is said to be a \emph{bi-Cayley graph} over a group $H$ if it admits $H$ as a semiregular automorphism
group with two orbits of equal size. (Some authors have used the term \emph{semi-Cayley} instead, see \cite{RJ-1992,LM-1993}.) Moreover, bicirculants are exactly the bi-Cayley graphs over cyclic groups.
The family of  bi-Cayley graphs has been extensively studied, for example cubic bi-Cayley graphs over abelian groups were investigated by Zhou and Feng \cite{ZF-bicay-2014} while the automorphism groups of bi-Cayley graphs were studied in \cite{ZF-bicir-2016}.

 For a graph $\Gamma$, its
\emph{complement} $\overline{\Gamma}$ is the graph with vertex set $V(\Gamma)$,
and two vertices are adjacent if and only if they are not adjacent in $\Gamma$.
We denote the complete graph on $n$ vertices by $\K_n$.

Let $\Gamma_1$ and $\Gamma_2$  be two graphs.
The \emph{lexicographic product} $\Gamma_1[\Gamma_2]$, of $\Gamma_1$ and $\Gamma_2$, is the
graph with vertex set $V(\Gamma_1)\times V(\Gamma_2)$ such that $(u_1,u_2)$ is
adjacent to $(v_1,v_2)$ if and only if either $\{u_1,v_1\}$ is an edge of $\Gamma_1$, or $u_1=v_1$ and
$\{u_2,v_2\}$ is an edge of $\Gamma_2$.

The following theorem is the characterisation of arc-transitive circulants by Kov\'acs \cite{Kovacs-2004} and Li \cite{LCH-circulant-2005}. A \emph{normal circulant} is a Cayley graph $\Cay(T,S)$ such that $T$ is a cyclic group and $R(T)$ is a normal subgroup of $\Aut(\Gamma)$.

\begin{theo}{\rm (\cite[Theorem 1.3]{LCH-circulant-2005})}\label{arccirculant-1}
Let $\Gamma $ be a connected arc-transitive graph  of order $n$ which is not a complete graph and with a cyclic regular group.
Then either
\begin{enumerate}[{\rm (1)}]
\item $\Gamma $ is a normal circulant, or
\item there exists an arc-transitive circulant $\Sigma$  of order $m$, such that $ mb=n$ with $m,b\geqslant 2$, and
 \[\Gamma=\left\{
\begin{array}{ll}
\Sigma[\overline{\K_b}], \ \ or \\
\Sigma[\overline{\K_b}]-b\Sigma, (m,b)=1.
\end{array}\right.\]
\end{enumerate}
\end{theo}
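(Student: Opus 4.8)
Set $G=\Aut(\Gamma)$ and let $R=\langle g\rangle\cong\mathbb{Z}_n$ be the given cyclic regular subgroup; the base distinction is whether or not $R$ is normal in $G$. If $R\trianglelefteq G$ then $\Gamma$ is by definition a normal circulant and conclusion $(1)$ holds, so from now on I would assume $R\not\trianglelefteq G$ and aim for conclusion $(2)$. Because $R$ is regular and cyclic, its $G$-invariant block systems are linearly ordered and indexed by the divisors of $n$: the subgroup $\langle g^{n/b}\rangle$ of order $b$ has as its cosets a block system with $m=n/b$ blocks of size $b$. First I would pass to a \emph{maximal} such block system $\mathcal{B}$, so that the induced group $G^{\mathcal{B}}$ is primitive on the $m$ blocks and contains the regular cyclic subgroup $R^{\mathcal{B}}\cong\mathbb{Z}_m$. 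The crucial external input is the classification of finite primitive permutation groups containing a regular cyclic subgroup (Burnside in the prime-degree case, and in general the CFSG-based list of Jones): such a group is either $2$-transitive, or affine of prime degree with $\mathbb{Z}_p\trianglelefteq G^{\mathcal{B}}\le\AGL(1,p)$. In the affine case $R^{\mathcal{B}}$ is normal in $G^{\mathcal{B}}$, and analysing the kernel of the action on $\mathcal{B}$ then either shows $R\trianglelefteq G$ after all (conclusion $(1)$) or produces a proper block structure to which induction on $n$ applies; in the $2$-transitive case arc-transitivity forces the quotient $\Gamma_{\mathcal{B}}$ to be the complete graph $\K_m$.

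The reconstruction step is next: from the quotient together with the within- and between-block adjacencies I would recover $\Gamma$ as a (possibly deleted) lexicographic product. When distinct vertices sharing a block have identical neighbourhoods, the resulting \emph{module}/twin relation is $R$-invariant and collapses the blocks to an arc-transitive circulant $\Sigma$ of order $m=n/b$ with $\Gamma=\Sigma[\overline{\K_b}]$. The remaining, subtler possibility is the deleted product $\Gamma=\Sigma[\overline{\K_b}]-b\Sigma$; here the coprimality hypothesis $(m,b)=1$ is exactly the arithmetic condition under which the $b$ vertex-disjoint copies of $\Sigma$ to be removed can be chosen $R$-invariantly, so that the result still admits the cyclic regular group $R$.

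The step I expect to be the main obstacle is this last reconstruction, namely proving that a non-normal cyclic regular subgroup forces \emph{precisely} one of the two product shapes and nothing else, together with pinning down the condition $(m,b)=1$. The cleanest route I know is algebraic: translate $\Gamma$ and $G$ into the Schur ring over $\mathbb{Z}_n$ determined by the $2$-closure of $G$, and invoke the structural classification of Schur rings over cyclic groups (Leung--Man, Kov\'acs--Muzychuk), in which every such ring decomposes through tensor and wreath products. A nontrivial wreath-product factor of the S-ring corresponds exactly to a lexicographic decomposition of $\Gamma$; translating that algebraic decomposition into the two displayed graph families, and verifying the coprimality constraint in the deleted case, is the technical heart of the argument.
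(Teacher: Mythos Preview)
The paper does not prove this theorem at all: it is quoted verbatim as an external result of Li \cite{LCH-circulant-2005} (independently Kov\'acs \cite{Kovacs-2004}) and used as a black box in Lemmas~\ref{basiccirc-g2p-1} and~\ref{basiccirc-1}. There is therefore no ``paper's own proof'' against which to compare your proposal.

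For what it is worth, your outline is a fair hybrid of the two published proofs. Li's argument is the permutation-group route you sketch first (pass to a primitive quotient containing a regular cyclic subgroup and invoke the Jones/Li classification), while Kov\'acs's argument is the Schur-ring route you invoke for the reconstruction step. Two comments on the sketch itself. First, in your affine branch you say ``analysing the kernel \ldots\ either shows $R\trianglelefteq G$ after all or produces a proper block structure to which induction applies''; this is the genuine technical core of Li's proof, not a side case, and it is where the coprimality condition $(m,b)=1$ for the deleted product actually emerges. Second, in the $2$-transitive branch the conclusion $\Gamma_{\mathcal{B}}\cong\K_m$ does \emph{not} by itself yield $\Gamma=\K_m[\overline{\K_b}]$ or its deleted version: knowing the quotient is complete says nothing about how the between-block bipartite pieces glue together, so the ``reconstruction'' step cannot be done at the level of the maximal block system alone. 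Both published proofs handle this by structural induction (Li via a minimal non-normal situation, Kov\'acs via the tensor/wreath decomposition of the $S$-ring), which is essentially what you reach for at the end.
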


\subsection{The graphs appearing in Theorem \ref{bicirculant-reduction-th2}}

For $m,n\geqslant 2$, we denote   the \emph{complete multipartite graph}  with $m$ parts of size $n$ by   $\K_{m[n]}$, that is, the graph such that
 two vertices are adjacent if and only if they are in distinct parts.  The graph $\K_{2[n]}$ is often denoted by $\K_{n,n}.$

The    \emph{Hamming graph}  $\Ha(d,r)$  has
vertex set $\Delta^d=\{(x_1,x_2,\ldots,x_d)|x_i\in \Delta\}$, where
$\Delta=\{0,1,\ldots,r-1\}$, and  two vertices $v$ and $v'$ are adjacent if and only if they are different
in exactly one coordinate. The    Hamming graph  $\Ha(d,2)$ is  called a \emph{$d$-cube}, and a \emph{folded $d$-cube} is the quotient of $\Ha(d,2)$ by the partition into blocks of size two given by pairs of vertices at distance $d$.
 The complement of the folded $5$-cube is the \emph{Clebsch graph}.

 Let $p$ be an odd prime  and let $e$ be an even integer such that  $e$ divides $p-1$.
Let $\mathbb{Z}_p$ be the group of integers modulo $p$  and let $\sigma$ be a generator of $\Aut(\mathbb{Z}_p)$ $\cong \mathbb{Z}_{p-1}$.
Suppose that $p-1=er$ for some positive integer $r$.
Let $\tau=\sigma^r$ and $S=\{1,1^{\tau},1^{\tau^2},\ldots,1^{\tau^{e-1}}\}\subseteq \mathbb{Z}_p$.
Then $\Cay(p,e)$ denotes the graph $\Cay(\mathbb{Z}_p,S)$.

Let $p$ be an odd prime and let $r$ be a positive   integer  dividing $p-1$. Let $A$ and $A'$ denote two disjoint copies of $\mathbb{Z}_p$ and denote the corresponding elements of $A$ and $A'$ by $i$ and $i'$, respectively.
Let $L(p,r)$ be the unique subgroup of the multiplicative group of $\mathbb{Z}_p$ of order $r$.   We define
two graphs,  $G(2p,r)$ and $G(2,p,r)$, with vertex set $A\cup A'$.  The graph $G(2p,r)$ has edge set $\{\{x,y'\}|x,y\in \mathbb{Z}_p,y-x\in L(p,r)\},$ while  the graph $G(2,p,r)$ (defined only for $r$ even) has  edge set $\{\{x,y\},\{x',y\},\{x,y'\},\{x',y'\}|x,y\in \mathbb{Z}_p,y-x\in L(p,r)\}.$
Note that $G(2,p,r)$ is a non-bipartite graph as it contains a $p$-cycle and is a  2-cover of $\Cay(p,r)$, while  $G(2p,r)$ is bipartite.

For each integer $d\geq 3$ and prime power $q$, let $B(\PG(d-1,q))$ be the bipartite graph with vertices the 1-dimensional and $(d-1)$-dimensional subspaces of a $d$-dimensional vector space over $\GF(q)$, and two subspaces are adjacent if and only if one is contained in the other. We denote the bipartite complement of $B(\PG(d-1,q))$ by $B'(\PG(d-1,q))$, that is, the bipartite graph with the same vertex set but a 1-subspace and a $(d-1)$-subspace  are adjacent if and only if their intersection is the zero subspace.

We define $B'(H(11))$ to be the bipartite graph with vertices the elements of $\mathbb{Z}_{11}$ and the sets $R+i$, where $i\in\mathbb{Z}_{11}$ and
$R=\{1,3,4,5,9\}$, that is, the set of non-zero quadratic residues modulo 11,  such that $n\in\mathbb{Z}_{11}$  is adjacent to $R+i$ if and only if $n\notin R+i$. We note that the bipartite complement of $B'(H(11))$ is isomorphic to $G(22,5)$, see also \cite[p.200]{CO-1987}.

\subsection{Basic lemmas}

We state some  lemmas  which will be used in the
following discussion. The first lemma gives  three  easy observations.

\begin{lemma}\label{circ-bic-1}
\begin{enumerate}[{\rm (1)}]
\item Every circulant of even order is a bicirculant.
\item  A graph is a circulant if and only if its complement is a circulant.
\item  A graph is a bicirculant if and only if its complement is a bicirculant.
\end{enumerate}
\end{lemma}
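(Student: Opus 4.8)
The plan is to prove each of the three statements in Lemma~\ref{circ-bic-1} directly from the definitions, using the observation that an automorphism's cycle structure is preserved under complementation.

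\textbf{Part (1).} Let $\Gamma$ be a circulant of even order $2n$. By definition $\Gamma$ admits an automorphism $g$ that is a single $2n$-cycle on $V(\Gamma)$. First I would consider the power $h=g^2$. Since $g$ is a $2n$-cycle and $\gcd(2,2n)=2$, the permutation $h$ decomposes into exactly $2$ cycles, each of length $n$. It remains to check that $h$ is an automorphism of $\Gamma$, which is immediate since $h=g^2\in\langle g\rangle\leqslant\Aut(\Gamma)$. Thus $h$ witnesses that $\Gamma$ is a bicirculant.

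\textbf{Parts (2) and (3).} The key observation is that $\Aut(\Gamma)=\Aut(\overline{\Gamma})$: a permutation of the vertex set preserves adjacency in $\Gamma$ if and only if it preserves non-adjacency, i.e.\ adjacency in $\overline{\Gamma}$. Since both $\Gamma$ and $\overline{\Gamma}$ have the same vertex set and hence the same number of vertices, any automorphism with a prescribed cycle structure on $V(\Gamma)$ is equally an automorphism of $\overline{\Gamma}$ with the same cycle structure. For part (2), if $\Gamma$ is a circulant of order $n$ then it has an automorphism $g$ that is an $n$-cycle; since $g\in\Aut(\Gamma)=\Aut(\overline{\Gamma})$, the same $g$ shows $\overline{\Gamma}$ is a circulant, and the converse follows by symmetry (using $\overline{\overline{\Gamma}}=\Gamma$). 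For part (3), the identical argument applies with ``$n$-cycle'' replaced by ``automorphism of order $n$ with exactly two cycles of length $n$.''

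\textbf{Anticipated difficulty.} There is essentially no obstacle here: the only point requiring any care is the verification in Part~(1) that raising a $2n$-cycle to the square yields exactly two $n$-cycles rather than some other decomposition, which is a routine fact about cyclic permutations (the orbits of $g^2$ are the two cosets of $\langle g^2\rangle$ in $\langle g\rangle$). The remaining parts reduce to the single identity $\Aut(\Gamma)=\Aut(\overline{\Gamma})$ together with the symmetry $\overline{\overline{\Gamma}}=\Gamma$. The whole lemma is a collection of ``easy observations'' as the authors note, so I would keep the writeup brief and lean on these two structural facts.
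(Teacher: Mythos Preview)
Your proof is correct. The paper does not actually supply a proof of this lemma; it simply introduces it as ``three easy observations'' and states the result without justification. Your argument---squaring the $2n$-cycle for part~(1), and invoking $\Aut(\Gamma)=\Aut(\overline{\Gamma})$ for parts~(2) and~(3)---is precisely the standard verification one would expect, and there is nothing to compare against.
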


\begin{lemma}\label{basiccirc-g2p-1}
Let $\Gamma=G(2p,r)$ with $r>1$ and $r$ divides $p-1$. Then the following hold.
\begin{enumerate}[{\rm (1)}]
\item $\Gamma$ is  a Cayley graph of a dihedral group and hence is a bicirculant.
\item   $\Gamma$ is a circulant if and only if
$r$ is even.
\end{enumerate}

\end{lemma}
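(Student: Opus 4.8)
The plan is to prove the three implications separately. For (1) I would realise $\Gamma$ explicitly as a Cayley graph of the dihedral group $D=\langle a,b\mid a^p=b^2=1,\ a^b=a^{-1}\rangle$ of order $2p$. Identifying the vertex $i\in A$ with $a^i$ and $i'\in A'$ with $ba^i$, one has $(ba^j)(a^i)^{-1}=ba^{\,j-i}$, so declaring $\Gamma\cong\Cay(D,S)$ with $S=\{ba^m\mid m\in L(p,r)\}$ makes $i$ adjacent to $j'$ precisely when $j-i\in L(p,r)$, as required. Each element of $S$ is an involution, so $S=S^{-1}$ and $1\notin S$, and $\langle S\rangle=D$ since the products $(ba^m)(ba^n)=a^{\,n-m}$ $(m,n\in L(p,r))$ generate $\langle a\rangle$ (any nonzero element generates $\mathbb{Z}_p$ additively). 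Hence $\Gamma$ is a connected Cayley graph of $D$. As $\langle a\rangle$ is a cyclic subgroup of $D$ of order $p$ and index $2$, its right regular representation is semiregular on $\Gamma$ with the two orbits $\langle a\rangle$ and $b\langle a\rangle$; equivalently $t\colon i\mapsto i+1,\ i'\mapsto (i+1)'$ is an automorphism of order $p$ with exactly two cycles of length $p$. Thus $\Gamma$ is a bicirculant, giving (1).

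The reverse implication of (2) is then short. If $r$ is even then $\{1,-1\}\leqslant L(p,r)$, so $-1\in L(p,r)$, and $\phi\colon i\mapsto i',\ i'\mapsto i$ is an automorphism: an edge $\{i,j'\}$ with $j-i\in L(p,r)$ is sent to $\{j,i'\}$, and $i-j=-(j-i)\in L(p,r)$. A direct check shows $\phi$ commutes with $t$, so $\langle t,\phi\rangle\cong\mathbb{Z}_p\times\mathbb{Z}_2\cong\mathbb{Z}_{2p}$. This group has order $2p=|V(\Gamma)|$ and is transitive (as $t$ is transitive on each part and $\phi$ interchanges them), hence regular and cyclic, so $\Gamma$ is a circulant.

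The substance of the lemma is the forward direction of (2): if $\Aut(\Gamma)$ contains a regular cyclic subgroup $C=\langle c\rangle$ of order $2p$, I must show $r$ is even. Since any automorphism interchanging $A$ and $A'$ has only even cycles, every odd-order element preserves the bipartition; in particular $C_p=\langle c^2\rangle$ fixes $A$ and $A'$ setwise and, being semiregular of order $p$, acts regularly on each, while $c$ itself interchanges $A$ and $A'$ by transitivity of $C$. To pin down $c$ I would first bound the Sylow $p$-subgroups: distinct vertices of $A$ have distinct neighbourhoods $(i+L(p,r))'$, so an automorphism fixing $A'$ pointwise is the identity; hence any $p$-subgroup acts faithfully on $A'$ and embeds in $\Sym(A')\cong\Sym(p)$, forcing order at most $p$. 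Thus $\langle t\rangle$ and $C_p$ are conjugate Sylow $p$-subgroups, and after replacing $C$ by a conjugate and $c$ by a suitable generator I may assume $c^2=t$. Writing $c\colon i\mapsto\pi(i)',\ i'\mapsto\rho(i)$, the relation $c^2=t$ gives that both $\rho\pi$ and $\pi\rho$ equal the translation $i\mapsto i+1$; as the centraliser of this $p$-cycle in $\Sym(\mathbb{Z}_p)$ consists of the translations, $\pi(i)=i+s$ and $\rho(i)=i-s+1$ for some constant $s$. Imposing that $c$ preserve adjacency then yields $w\in L(p,r)\iff (2s-1)-w\in L(p,r)$ for all $w$, i.e. $(2s-1)-L(p,r)=L(p,r)$.

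It remains to extract $-1\in L(p,r)$ from this symmetry, and here the multiplicative structure is decisive: the elements of a nontrivial multiplicative subgroup of $\mathbb{Z}_p^{*}$ sum to $0$ (multiply the sum by a generator), so summing the identity $(2s-1)-L(p,r)=L(p,r)$ over $L(p,r)$ gives $r(2s-1)=0$ in $\mathbb{Z}_p$; since $1\leqslant r\leqslant p-1$ this forces $2s-1=0$, whence $-L(p,r)=L(p,r)$ and $-1\in L(p,r)$, so $2\mid r$. I expect the normalisation step — showing the Sylow $p$-subgroup has order $p$ so that $C_p$ may be conjugated onto $\langle t\rangle$ and $\pi,\rho$ forced to be affine — to be the main obstacle; once $\pi,\rho$ are translations, the closing arithmetic with the subgroup sum is immediate.
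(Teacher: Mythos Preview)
Your proof is correct, and for the forward implication of (2) you take a genuinely different route from the paper. The paper invokes the Kov\'acs--Li structure theorem for arc-transitive circulants (Theorem~\ref{arccirculant-1}): assuming $r$ is odd and $\Gamma$ is a circulant, it uses that $\Gamma$ is arc-transitive (from Cheng--Oxley) and then eliminates each structural possibility in turn --- the normal-circulant case via Godsil's lemma (the connecting set would have to consist of involutions), and the lexicographic-product cases $\Sigma[\overline{\K_b}]$ and $\Sigma[\overline{\K_b}]-b\Sigma$ by valency and parity counts. Your argument is entirely self-contained: after a Sylow/conjugacy normalisation you force the putative $2p$-cycle $c$ to act by translations on each part, and then the additive-sum identity for a nontrivial multiplicative subgroup of $\mathbb{Z}_p^{*}$ pins down $2s-1=0$ and hence $-1\in L(p,r)$. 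Your approach is more elementary --- no classification theorem, no appeal to arc-transitivity --- and it explains directly \emph{why} a regular $\mathbb{Z}_{2p}$ forces $-1$ into $L(p,r)$; the paper's approach instead situates the result within the circulant machinery already used elsewhere. One small remark: the step ``as the centraliser of this $p$-cycle \dots'' deserves one extra line --- from $\pi\rho=\rho\pi=(+1)$ you get $\rho=\pi^{-1}(+1)$ and hence $\pi^{-1}(+1)\pi=(+1)$, which is what puts $\pi$ in the centraliser --- but the conclusion is right.
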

\begin{proof} Recall that $V(\Gamma)$ consists of the elements $i$ and $i'$ for $i\in\mathbb{Z}_p$. Let
\begin{align*}
\tau: & \  V(\Gamma)\mapsto V(\Gamma),i\mapsto i+1,i'\mapsto (i+1)',\\
\sigma: & \  V(\Gamma)\mapsto V(\Gamma),i\mapsto (-i)', i'\mapsto -i.
\end{align*}

Then $\tau$ is an automorphism of $\Gamma$ of order  $p$ consisting of two $p$-cycles,
and $\sigma$ is an automorphism of $\Gamma$ of order  $2$ swapping the two orbits of $\tau$.
Moreover,  $\sigma \tau \sigma =\tau^{-1}$, and $\langle \sigma,\rho\rangle \cong D_{2p}$ is a dihedral group of order $2p$ that acts regularly on the vertex set.
Thus $\Gamma$ is a Cayley graph of $D_{2p}$, and
 so $\Gamma$ is a bicirculant, and (1) holds.

If $r$ is even, then $i\in L(p,r)$ if and only if $-i\in L(p,r)$.
Hence
\begin{align*}
\rho: & \  V(\Gamma)\mapsto V(\Gamma), i\mapsto i',i'\mapsto i
\end{align*}
 is a  graph automorphism of $\Gamma$ of order 2 and  $\rho \tau=\tau \rho$. Moreover,  $\langle \tau,\rho \rangle\cong \mathbb{Z}_{2p}$ is regular on $V(\Gamma)$. Thus $\Gamma$ is a circulant.

 Now let $r$ be an odd divisor of $p-1$ and suppose that $\Gamma$ is a circulant of the cyclic group  $T$. Then  $\Gamma=\Cay(T,S)$ where $S$ is a subset of $T\setminus \{1_T\}$. Hence $|S|=r$ is an odd integer.
Since $\Gamma$ is undirected, it follows that $S=S^{-1}$, and so $S$ contains the unique involution of $T$.
Suppose that $T\lhd \Aut(\Gamma)$ and let $u=1_T$. Then by \cite[Lemma 2.1]{Godsil-1981}, $\Aut(\Gamma)_u\leqslant \Aut(T)$. Since $\Gamma$ is arc-transitive and $\Gamma(u)=S$, it follows that all elements in $S$ are involutions, a contradiction.
Thus by Theorem \ref{arccirculant-1},
there exists an arc-transitive circulant $\Sigma$  of order $m$, such that $ mb=2p$ with $m,b\in \{2,p\}$, and
 \[\Gamma=\left\{
\begin{array}{ll}
\Sigma[\overline{\K_b}], \ \ or \\
\Sigma[\overline{\K_b}]-b\Sigma.
\end{array}\right.\]

Assume first that $\Gamma=\Sigma[\overline{\K_b}]$. If $(m,b)=(2,p)$, then $\Gamma\cong \K_{p,p}$ has  valency $p>r$, a contradiction.
If $(m,b)=(p,2)$, then each block has 2 vertices, and any two adjacent blocks induce a subgraph  $\K_{2,2}$, and so  $\Gamma$ has even valency, again a contradiction.
Now suppose that $\Gamma=\Sigma[\overline{\K_b}]-b\Sigma$. If $(m,b)=(2,p)$, then $\Gamma\cong \K_{p,p}-p\K_2$ has valency $p-1>r$, a contradiction.
If $(m,b)=(p,2)$, then each block has 2 vertices, and any two adjacent blocks induce a subgraph $2\K_{2}$. Thus $\Gamma$ is a cover of $\Sigma$, and
$\Sigma$ has  valency $r$ and order $p$.
However, both $p$ and $r$ are odd integers, which is impossible.
Hence    $\Gamma$ is  not a circulant when $r$ is an odd divisor of $p-1$.
Therefore,  $\Gamma$ is a circulant if and only if
$r$ is even, so that (2) holds.
\end{proof}

\begin{lemma}\label{basiccirc-1}
A  graph in Theorem \ref{bicirculant-reduction-th2}(a)--(g) is a circulant if and only if
it is one of the following graphs:

 \begin{enumerate}[{\rm (1)}]
\item    $\K_{n,n}$, $n\geqslant 2$;
\item   $\K_n$, $ \K_{n[2]}$,  $n\geqslant 3$;
\item    $\K_{n,n}-n\K_2$, $n\geqslant 3$ is an odd integer;
\item   $G(2p,r)$, where $p$ is a prime and  even $r>1$ divides $p-1$;
\item  $\Cay(p,e)$,  where  $p$ is a prime and $e$ is an even integer dividing  $p-1$.
\end{enumerate}

\end{lemma}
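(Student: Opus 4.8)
The plan is to treat the two directions separately, handling one graph family at a time. For the ``if'' direction I exhibit an explicit cyclic regular group. The graphs $\Cay(p,e)$ are circulants by definition, and $G(2p,r)$ with even $r$ is a circulant by Lemma~\ref{basiccirc-g2p-1}(2). For the remaining entries of the list I give direct models on $\mathbb{Z}_n$: taking the odd residues of $\mathbb{Z}_{2n}$ as connection set gives $\K_{n,n}=\Cay(\mathbb{Z}_{2n},\{1,3,\dots,2n-1\})$; taking $\mathbb{Z}_n\setminus\{0\}$ gives $\K_n$; taking $\mathbb{Z}_{2n}\setminus\{0,n\}$ gives the cocktail-party graph $\K_{n[2]}$; and when $n$ is odd, $n$ is the unique self-inverse odd element of $\mathbb{Z}_{2n}$, so $\Cay(\mathbb{Z}_{2n},\{1,3,\dots,2n-1\}\setminus\{n\})$ is symmetric of valency $n-1$ and realises the crown graph $\K_{n,n}-n\K_2$.

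For the ``only if'' direction I must show that every graph in (a)--(g) not appearing in (1)--(5) fails to be a circulant. Three of these are immediate: $G(2p,e)$ with $e$ odd is excluded by Lemma~\ref{basiccirc-g2p-1}(2); the complements of the Petersen graph, $\Ha(2,4)$ and the Clebsch graph reduce to the graphs themselves by Lemma~\ref{circ-bic-1}(2); and the crown graph $\K_{n,n}-n\K_2$ with $n$ even is ruled out by a parity count, since a circulant model on $\mathbb{Z}_{2n}$ would have a symmetric connection set $S$ of odd size $n-1$ contained in the odd residues, but for $n$ even there is no self-inverse odd element, so $S$ is a disjoint union of inverse pairs and $|S|$ is even, a contradiction.

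For the remaining graphs $\Gamma$ (Petersen, $\Ha(2,4)$, Clebsch, $B(\PG(n,q))$, $B'(\PG(n,q))$, $B'(H(11))$) the strategy is to assume $\Gamma$ is a circulant and apply the Kov\'acs--Li structure theorem (Theorem~\ref{arccirculant-1}), exactly as in the proof of Lemma~\ref{basiccirc-g2p-1}(2), since each $\Gamma$ is connected, arc-transitive and not complete. If $\Gamma$ were a normal circulant on $\mathbb{Z}_n$, then by Godsil's lemma the vertex stabiliser embeds in the abelian group $\mathrm{Aut}(\mathbb{Z}_n)$; arc-transitivity forces this stabiliser to act regularly on the neighbourhood, whence the valency $k$ divides $\phi(n)$. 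This already excludes the three small graphs ($3\nmid\phi(10)=4$ for Petersen, $5\nmid\phi(16)=8$ for Clebsch, $6\nmid\phi(16)=8$ for $\Ha(2,4)$); for the projective graphs one instead notes that the index-two subgroup of the cyclic group would be a Singer cycle, which is never normal in the relevant almost-simple automorphism group, so the normal case cannot occur. The lexicographic cases $\Gamma=\Sigma[\overline{\K_b}]$ are excluded because such a graph has twin vertices (a whole fibre shares a neighbourhood), whereas all these $\Gamma$ are twin-free; and $\Gamma=\Sigma[\overline{\K_b}]-b\Sigma$ with $\gcd(m,b)=1$ cannot occur for orders $10$ and $16$ by a valency/handshake check (for instance it would force an odd-valent graph on an odd number of vertices), after noting that $16$ admits no nontrivial coprime factorisation.

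The main obstacle is the remaining minus-case for the bipartite projective graphs and for $B'(H(11))$, where $\Sigma[\overline{\K_b}]-b\Sigma$ can be a bipartite double cover and therefore survives the crude valency tests. Here the crux is that the natural cyclic symmetry of these graphs acting on one part (a Singer cycle for $B(\PG(n,q))$ and $B'(\PG(n,q))$, and the translation subgroup $\mathbb{Z}_{11}$ for the Paley biplane underlying $B'(H(11))$) extends to the whole graph only through a duality that \emph{inverts} it, producing a dihedral regular group rather than a cyclic one. Concretely, a circulant structure would require a correlation $g$ of the projective space, respectively a polarity of the biplane, whose square is a Singer cycle, respectively a generator of the translation group; I will show no such element exists, so that $\langle g\rangle$ can only be dihedral. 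This is what distinguishes $B(\PG(n,q))$, $B'(\PG(n,q))$ and $B'(H(11))$, which are bicirculants but not circulants, from their even-$r$ relatives $G(2p,r)$ and $\Cay(p,e)$, and completes the classification.
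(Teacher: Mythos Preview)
Your ``if'' direction and most of the ``only if'' direction are fine, and in places cleaner than the paper: your parity argument for $\K_{n,n}-n\K_2$ with $n$ even is a genuine improvement over the paper's citation of \cite{ACMX-1996}, and your Kov\'acs--Li case analysis handles the sporadic graphs (Petersen, $\Ha(2,4)$, Clebsch, and in fact also $B'(H(11))$, since $6\nmid\phi(22)=10$ for the normal case and the minus case dies on valency grounds just as for the Petersen graph) without recourse to \textsc{Magma}. Two small points: you omit $B'(H(11))$ from your ``three small graphs'' list in the normal case, though your own divisibility test disposes of it; and the phrase ``$\langle g\rangle$ can only be dihedral'' is a slip --- you mean the regular subgroup generated by a Singer cycle together with a duality is dihedral.

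The real gap is the projective case. Your plan runs the Kov\'acs--Li trichotomy, disposes of the normal and lexicographic branches, and then for the surviving minus branch proposes to show directly that no correlation $g\in\Aut(\PSL(d,q))\setminus\PGammaL(d,q)$ has $g^2$ equal to a Singer element. But that last statement is exactly the assertion that $\Aut(\Gamma)$ has no cyclic regular subgroup: if you could prove it, the entire Kov\'acs--Li detour for these graphs would be superfluous. You give no argument for it, and it is not routine --- it amounts to showing $\Aut(\PSL(d,q))$ contains no element of order $2(q^d-1)/(q-1)$ acting as a single cycle, which requires genuine work with Singer normalisers and correlations for all $d\geqslant 3$ and all $q$.

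The paper sidesteps this entirely. Its argument is that if $B(\PG(d-1,q))$ or $B'(\PG(d-1,q))$ were a circulant, then since $\PGL(d,q)\leqslant\Aut(\Gamma)$ is primitive on each bipartite half and $\Gamma$ is neither $\K_{n,n}$ nor $\K_{n,n}-n\K_2$, the non-normal alternatives in Theorem~\ref{arccirculant-1} are excluded (they force a non-trivial $\Aut(\Gamma)$-invariant partition refining the bipartition, hence imprimitivity on a half); so the putative cyclic regular subgroup would be normal in $\Aut(\Gamma)$. But $\Aut(\Gamma)=\Aut(\PSL(d,q))$ is almost simple with socle $\PSL(d,q)$, so it has no non-trivial normal cyclic subgroup. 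This is short and uniform in $(d,q)$; your approach, as it stands, is missing precisely the step that carries the weight.
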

\begin{proof}  Clearly $\K_n$, $\K_{n,n}$ and $\K_{n[2]}$ are circulants for all $n\geqslant 2$, while $\Cay(p,e)$ is a circulant by definition.
By Lemma \ref{basiccirc-g2p-1},  $G(2p,r)$ is a circulant if and only if  $r$ is an even divisor of $p-1$.

It can be easily checked, for example by Magma \cite{Magma-1997}, that $\Ha(2,4)$, $B'(H(11))$, the Clebsch graph and the Petersen graph are not circulants, and by Lemma \ref{circ-bic-1} (2), neither are their complements.

Let $\Gamma=\K_{n,n}-n\K_2$. Then $\Aut(\Gamma)\cong S_n\times S_2$  and when $n$ is odd, this contains a cyclic subgroup of order $2n$ that is regular on the vertex set, and so
$\Gamma$ is a circulant. Moreover, when $n\geqslant 4$ is even, $\Gamma$ is not a circulant, see for example \cite[Theorem 1.1]{ACMX-1996}.

It remains to consider the graphs $B(\PG(d-1,q))$ and $B'(\PG(d-1,q))$, which have automorphism group $\Aut(\PSL(d,q))$.  Suppose that these graphs are circulants. Since $\PGL(d,q)$ acts primitively on each bipartite half and these graphs are neither complete bipartite nor of the form $\K_{n,n}-n\K_2$, the results of Kov\'acs \cite{Kovacs-2004} and Li \cite{LCH-circulant-2005} imply that $\Aut(\Gamma)$ contains a normal cyclic regular subgroup, a contradiction.
 \end{proof}

\begin{lemma}\label{basiccirc-3}
The  graphs in Theorem \ref{bicirculant-reduction-th2}(a)--(g) are bicirculants except for  $\Cay(p,e)$ and  $\K_n$ with  $n$ odd.
\end{lemma}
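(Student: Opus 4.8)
The plan is to split the graphs of Theorem \ref{bicirculant-reduction-th2}(a)--(g) into those that are circulants and those that are not, using the classification of the circulant ones in Lemma \ref{basiccirc-1}. For the circulants I would invoke Lemma \ref{circ-bic-1}(1): a circulant of even order is automatically a bicirculant. Inspecting the list in Lemma \ref{basiccirc-1}, the only circulants of odd order are $\K_n$ with $n$ odd (order $n$) and $\Cay(p,e)$ (order $p$, with $p$ an odd prime, since $p-1$ must admit the even divisor $e$). As every bicirculant has even order $2n$ by definition, neither of these can be a bicirculant, which accounts for exactly the two claimed exceptions. All remaining circulants in the list, namely $\K_{n,n}$, $\K_{n[2]}$, $\K_n$ with $n$ even, $\K_{n,n}-n\K_2$ with $n$ odd, and $G(2p,r)$ with $r$ even, have even order and hence are bicirculants.

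It then remains to treat the non-circulant graphs directly. For $\K_{n,n}-n\K_2$ (needed in particular for even $n$, where it is not a circulant) I would exhibit the semiregular automorphism explicitly: writing the vertices as $\{0,\ldots,n-1\}\cup\{0',\ldots,(n-1)'\}$ with $i$ adjacent to $j'$ exactly when $i\neq j$, the map $i\mapsto i+1$, $i'\mapsto (i+1)'$ (indices modulo $n$) has order $n$ with two orbits of size $n$. The graphs $G(2p,r)$ are bicirculants for all $r>1$ by Lemma \ref{basiccirc-g2p-1}(1). For $B(\PG(n,q))$ and $B'(\PG(n,q))$ I would use a Singer cycle, a cyclic group acting regularly on the $1$-subspaces and, by duality, regularly on the hyperplanes; since these two sets are the bipartite halves and have equal size, the Singer cycle is a semiregular cyclic group with two orbits of equal size, so both graphs are bicirculants. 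For $B'(H(11))$ I would either use the translation action $n\mapsto n+1$, $R+i\mapsto R+(i+1)$ of $\mathbb{Z}_{11}$ directly, or observe that $B'(H(11))$ is the bipartite complement of $G(22,5)$ and that taking the bipartite complement preserves a semiregular cyclic group whose two orbits are the bipartite halves.

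For the sporadic graphs of (g) I would work case by case and then appeal to Lemma \ref{circ-bic-1}(3) for the complements. The Petersen graph is the generalised Petersen graph over $\mathbb{Z}_5$, and the common rotation of its outer and inner pentagons gives an order-$5$ automorphism with two $5$-cycles. For $\Ha(2,4)$, with vertex set $\mathbb{Z}_4\times\mathbb{Z}_4$, the composite of the coordinate swap with the translation $(x,y)\mapsto(x,y+1)$ is an automorphism of order $8$, which I would check has exactly two cycles, each of length $8$. For the Clebsch graph I would similarly produce a cyclic semiregular subgroup of order $8$ with two orbits of size $8$, or verify this by Magma \cite{Magma-1997} in the style of Lemma \ref{basiccirc-1}; since the Clebsch graph is the complement of the folded $5$-cube, realising either graph as a bicirculant suffices. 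The complements of the three sporadic graphs are then bicirculants by Lemma \ref{circ-bic-1}(3). The main obstacle lies precisely in this last block: unlike the infinite families, $\Ha(2,4)$ and the Clebsch graph are not reached through the circulant reduction, so one must exhibit a concrete cyclic automorphism of order $8$ on $16$ vertices and confirm its cycle structure, which is where a genuine (if small) computation is required.
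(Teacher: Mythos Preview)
Your proposal is correct and follows essentially the same approach as the paper: rule out the two odd-order exceptions by parity, dispatch the even-order circulants via Lemma~\ref{circ-bic-1}(1), use a Singer cycle for $B(\PG(n,q))$ and $B'(\PG(n,q))$, invoke Lemma~\ref{basiccirc-g2p-1}(1) for $G(2p,r)$, and handle the sporadic graphs in (f) and (g) by exhibiting or asserting the existence of a semiregular cyclic automorphism, passing to complements via Lemma~\ref{circ-bic-1}(3). The only differences are cosmetic: you are more explicit (giving a concrete order-$8$ element of $\Aut(\Ha(2,4))$ and the translation action for $B'(H(11))$) where the paper simply cites the structure of $\Aut(\Gamma)$ and appeals to Lemma~\ref{circ-bic-1}(3) for $B'(H(11))$ via $G(22,5)$.
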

\begin{proof}   Since the graphs $\Cay(p,e)$ and $\K_n$ with $n$ odd, have an odd number of  vertices, they are not  bicirculants.
It remains to prove that the remaining graphs in Theorem \ref{bicirculant-reduction-th2}(a)--(g) are bicirculants.

By Lemma \ref{basiccirc-1}, $\K_n$ with $n$ even, $\K_{n,n}$ with $n\geqslant 2$ and  $\K_{n[2]}$ with $n\geqslant 3$  are  circulants.
Moreover, all of them have an even number of vertices, and so  by Lemma \ref{circ-bic-1}(1),  they are bicirculants.
Clearly, the Petersen graph and its complement, and $\K_{n,n}-n\K_2$ with $n\geqslant 3$    are bicirculants.

The group $\PGL(d,q)$ has a cyclic subgroup of order  $\frac{q^d-1}{q-1}$ that acts regularly on the set of $1$-dimensional subspaces and the set of hyperplane of $\GF(q)^d$. Thus both $B(\PG(d-1,q))$ and $B'(\PG(d-1,q))$  are bicirculants.

By Lemma \ref{basiccirc-g2p-1},  $G(2p,r)$ is a bicirculant.
Note that $B(H(11))\cong G(22,5)$ is a bicirculant on $22$ vertices. Hence
its complement, $B'(H(11))$, is also a bicirculant by Lemma \ref{circ-bic-1}(3).

If $\Gamma=\Ha(2,4)$, then $\Aut(\Gamma)\cong S_4 \wr S_2$ has a cyclic subgroup of order 8
that is semiregular with two orbits of size $8$ on the vertex set.
Hence $\Ha(2,4)$ is  a bicirculant. Similarly, if $\Gamma$ is the Clebsch graph then $\Aut(\Gamma)\cong \mathbb{Z}_2^4.S_5$ has a cyclic subgroup of order 8
that is semiregular with two orbits of size $8$ on the vertex set.
Thus the Clebsch graph is  a bicirculant.
 \end{proof}

Let $\Gamma$  be a graph with vertex set $V(\Gamma)$ and arc set $A(\Gamma)$.  We define a bipartite graph from $\Gamma$ in the following way.
Let $\widehat{\Gamma}$ be the graph with   vertex set $V(\Gamma)\times \{1,2\}$, and two vertices $(x,1)$ and $(y,2)$ are adjacent if and only if $(x,y)\in A(\Gamma)$. Then the new graph $\widehat{\Gamma}$ is called the  \emph{standard double cover} of $\Gamma$, and it is bipartite with bipartite halves $V(\Gamma)\times \{i\}$ for each $i=1,2$. Note that $\widehat{\Gamma}$ is connected if and only if $\Gamma$ is not bipartite, see \cite[Lemma 3.3]{GLP-1}.

The following lemma shows that all the circulants arising in  Theorem \ref{bicirculant-reduction-th2} are the quotient graphs of some bicirculants.

\begin{lemma}\label{circ-sdc}
If $\Gamma$ is a circulant then the standard double cover of $\Gamma$ is a bicirculant. Moreover, if $\Gamma$ is $G$-arc-transitive then there exists $X\leqslant\Aut(\widehat{\Gamma})$ such that $\widehat{\Gamma}$ is $X$-arc-transitive and $N\lhd X$ such that $\widehat{\Gamma}_N\cong\Gamma$.
\end{lemma}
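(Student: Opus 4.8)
The plan is to treat the two assertions in turn, both by directly lifting automorphisms of $\Gamma$ to the double cover. Write $\Gamma$ as a circulant on $\mathbb{Z}_n$, so that $g\colon i\mapsto i+1$ is an automorphism of $\Gamma$ which is a single $n$-cycle. For the first claim I would define $\widehat{g}\colon (x,i)\mapsto (x+1,i)$ for $i\in\{1,2\}$ and check it lies in $\Aut(\widehat{\Gamma})$: since $(x,1)$ and $(y,2)$ are adjacent exactly when $(x,y)\in A(\Gamma)$, and $g\in\Aut(\Gamma)$ gives $(x,y)\in A(\Gamma)$ iff $(x+1,y+1)\in A(\Gamma)$, the map $\widehat{g}$ preserves adjacency. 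It clearly has order $n$, and its two orbits are the bipartite halves $V(\Gamma)\times\{1\}$ and $V(\Gamma)\times\{2\}$, each of which is a single $n$-cycle because $g$ is. Hence $\widehat{g}$ is semiregular with exactly two cycles of length $n$, so $\widehat{\Gamma}$ is a bicirculant.

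For the second claim, suppose $\Gamma$ is $G$-arc-transitive. I would first record two kinds of automorphisms of $\widehat{\Gamma}$. Each $\alpha\in G$ lifts to $\widehat{\alpha}\colon (x,i)\mapsto(x^\alpha,i)$, and the same adjacency computation as above (now using $\alpha\in\Aut(\Gamma)$) shows $\widehat{\alpha}\in\Aut(\widehat{\Gamma})$; this yields an embedding $G\hookrightarrow\Aut(\widehat{\Gamma})$ with image $\widehat{G}$ fixing each bipartite half setwise. Secondly, the swap $\sigma\colon(x,i)\mapsto(x,3-i)$ is an automorphism of $\widehat{\Gamma}$: it sends the edge $\{(x,1),(y,2)\}$ to $\{(x,2),(y,1)\}$, which is an edge precisely because $\Gamma$ is undirected, so $(x,y)\in A(\Gamma)$ iff $(y,x)\in A(\Gamma)$. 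A short calculation gives $\sigma\widehat{\alpha}\sigma=\widehat{\alpha}$, so $\sigma$ centralises $\widehat{G}$ and $X:=\langle\widehat{G},\sigma\rangle=\widehat{G}\times\langle\sigma\rangle\leqslant\Aut(\widehat{\Gamma})$.

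Next I would verify that $X$ is arc-transitive. Since $G$ is arc-transitive (hence vertex-transitive) on $\Gamma$, the group $\widehat{G}$ is transitive on each half and, for any two arcs $((x,1),(y,2))$ and $((x',1),(y',2))$ with $(x,y),(x',y')\in A(\Gamma)$, an element of $G$ carrying the arc $(x,y)$ to $(x',y')$ lifts to an element of $\widehat{G}$ carrying the first to the second. Thus $\widehat{G}$ is transitive on the arcs directed from the first half to the second. Applying $\sigma$ sends $((y,2),(x,1))$ to $((y,1),(x,2))$, an arc of the first kind, so $\sigma$ interchanges the two directions; combining these, $X$ is transitive on all arcs of $\widehat{\Gamma}$, and in particular vertex-transitive.

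Finally, set $N=\langle\sigma\rangle\lhd X$. Its orbits on $V(\widehat{\Gamma})$ are the pairs $\{(x,1),(x,2)\}$, giving a bijection $x\mapsto\{(x,1),(x,2)\}$ between $V(\Gamma)$ and $V(\widehat{\Gamma}_N)$. Two such classes are adjacent in $\widehat{\Gamma}_N$ exactly when some edge of $\widehat{\Gamma}$ joins them, which (again using that $\Gamma$ is undirected) happens iff $(x,y)\in A(\Gamma)$; no class is joined to itself since $\Gamma$ is loopless. Hence this bijection is a graph isomorphism $\widehat{\Gamma}_N\cong\Gamma$, as required. I expect the only delicate points to be the verifications that $\sigma$ is an automorphism and that it commutes with the lifts $\widehat{\alpha}$ (both resting on the undirectedness of $\Gamma$), together with the bookkeeping that passes from arc-transitivity of $G$ on $\Gamma$ to arc-transitivity of $X$ across the two ``directions'' of arcs; the connectivity of $\widehat{\Gamma}$ is only at issue when $\Gamma$ is bipartite, which does not occur for the circulants to which the lemma is applied.
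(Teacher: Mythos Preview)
Your proof is correct and follows exactly the same approach as the paper: lift each $g\in G$ to $\widehat{g}$ acting coordinatewise, adjoin the swap $\sigma$ (which the paper calls $\tau$), set $X=\widehat{G}\times\langle\sigma\rangle$ and $N=\langle\sigma\rangle$, and read off arc-transitivity and the quotient. Your version simply spells out in detail the verifications (that $\sigma$ is an automorphism, that it centralises $\widehat{G}$, and that $X$ is arc-transitive) which the paper leaves implicit.
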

\begin{proof} Suppose that $g\in\Aut(\Gamma)$. Then $g$ induces an automorphism $\hat{g}$ of $\widehat{\Gamma}$ by $(x,i)^{\hat{g}}=(x^g,i)$. Moreover, if $g$ is an $n$-cycle on $V(\Gamma)$ then $\hat{g}$ has two $n$-cycles on $V(\widehat{\Gamma})$. Thus the first part of the lemma follows.

Note that $\tau:(x,i)\mapsto(x,3-i)$ is an automorphism of $\widehat{\Gamma}$ and $X:=\widehat{G}\times\langle\tau\rangle\leqslant\Aut(\widehat{\Gamma})$ where $\widehat{G}=\{\hat{g}\mid g\in G\}\cong G$. If $\Gamma$ is $G$-arc-transitive then $\widehat{\Gamma}$ is $X$-arc-transitive. Letting $N=\langle \tau\rangle\lhd X$ we see that the orbits of $N$ are $\{(v,i)\mid i\in\{1,2\}\}$ for each $v\in V(\Gamma)$. Moreover, $\widehat{\Gamma}_N\cong\Gamma$.
\end{proof}

All arc-transitive graphs on $2p$ vertices for a prime $p$ are given by the following result.

\begin{theo}\label{bicirc-2p-1}\cite[Theorem 2.4]{CO-1987}
Let $\Gamma$ be a connected arc-transitive  graph.
If $|V(\Gamma)|=2p$ for some prime number $p$, then   $\Gamma$ is one of the following graphs:

\begin{enumerate}[{\rm (1)}]
\item $\K_{2p}$ or $\K_{p,p}$;

\item the Petersen graph or its complement;

\item $G(2,p,r)$ for some even integer $r$ dividing $p-1$;

\item $G(2p,r)$ for some  integer $r>1$ dividing $p-1$;

\item $B(\PG(n-1,q))$ or  $B'(\PG(n-1,q))$, and $p=\frac{q^n-1}{q-1}$;

\item $B'(H(11))$ and $p=11$.
\end{enumerate}

\end{theo}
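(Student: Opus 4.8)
My plan is to recast the statement group-theoretically and reduce it to the classification of transitive permutation groups of degree $2p$. Put $G=\Aut(\Gamma)$. Since $\Gamma$ is arc-transitive it is vertex-transitive, so $G$ is transitive of degree $2p$ and the arcs of $\Gamma$ form a single self-paired $G$-orbital (an orbit on ordered pairs of distinct vertices that is closed under reversal). It therefore suffices to run through the transitive groups $G$ of degree $2p$ and, for each, read off the arc-transitive graphs it admits. The organising principle is the $G$-invariant block structure on $V(\Gamma)$: as $|V(\Gamma)|=2p$, a nontrivial block has size $2$ (giving $p$ blocks) or size $p$ (giving two blocks), so the proof splits into the primitive case and these two imprimitive cases. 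Throughout, the key arithmetic input is that $p^{2}\nmid 2p$, so a Sylow $p$-subgroup of $G$ has order $p$ and is generated by an element whose cycle type on the $2p$ points is either two $p$-cycles or one $p$-cycle together with $p$ fixed points; following Wielandt, this tightly constrains $G$.

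For the primitive case I would invoke the classification of primitive permutation groups of degree $2p$. The outcome is that, with a single exception, $G$ is $2$-transitive and hence $\Gamma=\K_{2p}$ (family (1)); the exception occurs at $p=5$, where $\Sym(5)$ acts primitively of rank $3$ on the $10$ unordered pairs from a $5$-set and produces exactly the Petersen graph (the disjointness graph) and its complement the triangular graph (family (2)).

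Next, suppose $G$ is imprimitive with a block system consisting of two blocks $\Delta_{1},\Delta_{2}$ of size $p$. Since every automorphism preserves the partition, within-block and between-block arcs lie in distinct $G$-orbits, so arc-transitivity together with connectedness forces $\Gamma$ to be bipartite with $\Delta_{1},\Delta_{2}$ as its halves. The subgroup $G^{+}\le G$ fixing both halves has index at most $2$ and induces on each $\Delta_{i}$ a transitive group of prime degree $p$, which by Burnside's theorem is $2$-transitive or contained in $\AGL(1,p)$; thus $\Gamma$ is the incidence graph of a $G^{+}$-invariant point/block structure on $p$ points. In the solvable (affine) subcase the connection set is a subgroup $L(p,r)\le\mathbb{Z}_{p}^{\times}$ and $\Gamma$ is the Cayley graph of a dihedral group $G(2p,r)$ (family (4)), with $\K_{p,p}$ arising as the degenerate complete-incidence graph (family (1)); in the almost simple $2$-transitive subcase the design is forced to be either a projective geometry, giving the point--hyperplane incidence and non-incidence graphs $B(\PG(n-1,q))$ and $B'(\PG(n-1,q))$ with $p=\frac{q^{n}-1}{q-1}$ (family (5)), or, at $p=11$, the Paley biplane on $11$ points, giving $B'(H(11))$ (family (6)).

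Finally, suppose $G$ is imprimitive with $p$ blocks of size $2$. By the same orbit argument the blocks must be independent sets, so $\Gamma$ is a connected arc-transitive $2$-cover of its quotient $\Gamma_{\mathcal{B}}$, which is an arc-transitive graph on $p$ vertices. A transitive group of prime degree is $2$-transitive or lies in $\AGL(1,p)$, and the requirement that $\Gamma_{\mathcal{B}}$ be an undirected graph forces $-1$ into the connection set; hence $\Gamma_{\mathcal{B}}$ is $\K_{p}$ or a circulant $\Cay(p,r)$ with even $r\mid p-1$. Lifting through the block system then identifies $\Gamma$ as $G(2,p,r)$ (family (3)), the bipartite lifts merely reproducing graphs already found. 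This exhausts all cases. The main obstacle is twofold, and both parts rest on deep classification theorems: showing that a primitive group of degree $2p$ is $2$-transitive except at $p=5$, and proving that the $2$-transitive groups of prime degree contribute exactly the incidence graphs $B(\PG(n-1,q))$, $B'(\PG(n-1,q))$ and $B'(H(11))$, and nothing more. The cleanest route to both is through the classification of finite simple groups.
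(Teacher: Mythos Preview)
The paper does not prove this theorem; it is quoted from Cheng and Oxley \cite{CO-1987} and used as a black box. Your outline is in fact a faithful sketch of the Cheng--Oxley argument: split according to the block structure of a transitive group of degree $2p$, invoke the classification of primitive groups of degree $2p$ for the primitive case, Burnside's theorem on transitive groups of prime degree for the two imprimitive cases, and Chao's classification of arc-transitive graphs of prime order for the quotient when there are $p$ blocks of size~$2$.

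Two steps in your sketch conceal real work. In the two-blocks-of-size-$p$ case with $G^{+}$ acting $2$-transitively on each half, you assert that ``the design is forced to be either a projective geometry \dots\ or the Paley biplane''. This requires running through the full list of $2$-transitive groups of prime degree $p$ (namely $A_{p}$, $S_{p}$, $\PSL(d,q)$ with $\tfrac{q^{d}-1}{q-1}=p$, $\PSL(2,11)$, $M_{11}$, $M_{23}$), determining for each the self-paired suborbits of a point stabiliser on the other bipartite half, and checking that the alternating, symmetric and sporadic cases yield only $\K_{p,p}$ or $\K_{p,p}-p\K_{2}=G(2p,p-1)$. In the $p$-blocks-of-size-$2$ case, the induced subgraph on two adjacent blocks is either $\K_{2,2}$ or $2\K_{2}$. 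The former gives $\Gamma\cong\Cay(p,r)[\overline{\K_2}]=G(2,p,r)$ immediately, but the latter makes $\Gamma$ a genuine $2$-fold cover of $\Cay(p,r)$, and there are many such covers; you must argue that an \emph{arc-transitive} connected one is the bipartite double cover $G(2p,r)$ and nothing else. Your phrase ``the bipartite lifts merely reproduce graphs already found'' hides exactly this step. None of this is a wrong turn, but each is a page or two of case analysis in \cite{CO-1987} rather than a one-line remark.
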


We also give the following well-known lemma.
Note that every connected bipartite graph has a unique bipartite partition.
 For a $G$-vertex-transitive bipartite graph, we use $G^+$
to denote the stabiliser in $G$ of the two bipartite halves.

\begin{lemma}\label{lem:Knn}
Let $\Gamma$ be a $G$-arc-transitive connected graph such that $G$ is bi-quasiprimitive on $V(\Gamma)$. If $G^+$ acts unfaithfully on each orbit, then $\Gamma\cong \K_{n,n}$ for some $n\geqslant 2$.
\end{lemma}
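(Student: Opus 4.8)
The plan is to exploit the bipartite structure forced by bi-quasiprimitivity and then manufacture a normal subgroup whose orbits pin down $\Gamma$ completely.

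First I would establish that $\Gamma$ is bipartite and identify the two halves. Since $G$ is bi-quasiprimitive there is a nontrivial normal subgroup $M\lhd G$ with exactly two orbits $\Delta_1,\Delta_2$, and $\{\Delta_1,\Delta_2\}$ is a $G$-invariant partition of $V(\Gamma)$. As $G$ preserves this partition, in its action on arcs it preserves the distinction between arcs whose endpoints lie in the same part and arcs whose endpoints lie in different parts; arc-transitivity then forces all arcs to be of one single type. Arcs lying entirely within the parts would disconnect $\Gamma$, so every arc joins $\Delta_1$ to $\Delta_2$ and $\Gamma$ is bipartite with halves $\Delta_1,\Delta_2$. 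By uniqueness of the bipartition of a connected bipartite graph these are the halves stabilised by $G^+$, so $M\leqslant G^+$, the subgroup $G^+$ has index $2$ in $G$, and $G^+$ is transitive on each of $\Delta_1,\Delta_2$ (any element carrying one vertex of $\Delta_1$ to another fixes $\Delta_1$ setwise and hence lies in $G^+$); in particular $|\Delta_1|=|\Delta_2|=n$. Writing $K_i$ for the kernel of $G^+$ acting on $\Delta_i$, the orbits of $G^+$ are exactly $\Delta_1,\Delta_2$, so the hypothesis that $G^+$ is unfaithful on each orbit reads $K_1\neq 1\neq K_2$.

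Next I would build a normal subgroup of $G$ from these kernels. Both $K_1$ and $K_2$ are normal in $G^+$, and $K_1\cap K_2=1$ since an element fixing both halves pointwise fixes every vertex. For $g\in G\setminus G^+$, which swaps $\Delta_1$ and $\Delta_2$, conjugation by $g$ interchanges $K_1$ and $K_2$, so $N:=K_1K_2$ is a subgroup normalised both by $G^+$ and by $g$, whence $N\lhd G$; moreover $N\neq 1$ as $K_1\neq 1$. Since $N\leqslant G^+$ preserves each of the nonempty sets $\Delta_1,\Delta_2$, every $N$-orbit is contained in one part and so $N$ has at least two orbits, while bi-quasiprimitivity bounds the number of $N$-orbits by two. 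Hence $N$ has precisely the two orbits $\Delta_1,\Delta_2$ and is transitive on each. On $\Delta_1$ the subgroup $K_1$ acts trivially, so the action of $N$ on $\Delta_1$ coincides with that of $K_2$; thus $K_2$ is transitive on $\Delta_1$, and symmetrically $K_1$ is transitive on $\Delta_2$.

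Finally the neighbourhood argument finishes the proof. Fix $v\in\Delta_2$. Every element of $K_2$ fixes $v$ and hence preserves the neighbourhood $\Gamma(v)\subseteq\Delta_1$. But $K_2$ is transitive on $\Delta_1$, so it cannot preserve a nonempty proper subset of $\Delta_1$; as $\Gamma$ is connected, $\Gamma(v)$ is nonempty, which forces $\Gamma(v)=\Delta_1$. Therefore every vertex of $\Delta_2$ is adjacent to every vertex of $\Delta_1$, and $\Gamma\cong\K_{n,n}$. If $n=1$ then $G^+$ would fix both vertices, hence be trivial and thus faithful on each orbit, contradicting the hypothesis; so $n\geqslant 2$, as required. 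The step I expect to demand the most care is the construction of $N=K_1K_2$ together with the verification that it is normal in $G$ with exactly two orbits: this is precisely where bi-quasiprimitivity enters and is the conceptual heart of the argument, the subsequent combinatorics being routine.
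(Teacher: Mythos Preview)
Your proof is correct and follows essentially the same line as the paper's: form the product $N=K_1K_2$ of the two kernels (which the paper writes as $G^+_{(\Delta_0)}\times G^+_{(\Delta_1)}$), observe it is normal in $G$ because an element swapping the halves interchanges $K_1$ and $K_2$, invoke bi-quasiprimitivity to force $N$ transitive on each half, and conclude that each $K_i$ is transitive on the opposite half, which makes every neighbourhood the whole opposite part. Your write-up is in fact more explicit than the paper's at both ends---you justify the bipartite structure rather than simply asserting it, and you spell out the neighbourhood argument where the paper just says ``since $G$ is arc-transitive it follows that $\Gamma=\K_{n,n}$''---but the underlying argument is the same.
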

\begin{proof}
Let $\Delta_0$ and $\Delta_1$ be the bipartite halves of $\Gamma$. Let
$G_{(\Delta_0)}^{+}$ be the kernel of $G^+$ on $\Delta_0$ and $G_{(\Delta_1)}^{+}$ be the kernel of $G^+$ on $\Delta_1$.
Suppose that $G^+$ acts unfaithfully on each $\Delta_i$, that is, $G_{(\Delta_i)}^{+}\neq 1$ for $i=0,1$.   Also, let $\sigma\in G$ be an element interchanging $\Delta_0$ and $\Delta_1$ and note that $G=\langle G^+,\sigma\rangle$. Then $(G_{(\Delta_i)}^{+})^\sigma=G_{(\Delta_{1-i})}^{+}$  and so
$1 \neq G_{(\Delta_0)}^{+}\times G_{(\Delta_1)}^{+}\vartriangleleft G$.
Since $G$ is bi-quasiprimitive on $V(\Gamma)$ and $G_{(\Delta_0)}^{+}\times G_{(\Delta_1)}^{+}$ is not transitive, it follows that
$G_{(\Delta_0)}^{+}\times G_{(\Delta_1)}^{+}$ has two orbits on $V(\Gamma)$, namely $\Delta_0$ and $\Delta_1$. Thus $G_{(\Delta_0)}^{+}$ fixes each vertex in $\Delta_0$ and is transitive on $\Delta_1$. Since $G$ is arc-transitive it follows that $\Gamma=\K_{n,n}$ where $n=|\Delta_i|\geqslant 2$.
\end{proof}

\subsection{Some group theory}

The classification of    primitive permutation groups  that contain a
cyclic regular subgroup was independently obtained by Jones \cite{Jones-2002} and Li \cite[Corollary 1.2]{LCH-abelianregular-2003}.
Moreover, by \cite[Theorem 1.2]{LP-circulant-2012}, every   quasiprimitive group with a regular cyclic  subgroup is  primitive.
Hence the family of  quasiprimitive groups with a regular cyclic  subgroup is also  completely  determined in the following lemma.

\begin{lemma}{\rm (\cite{Jones-2002},\cite[Corollary 1.2]{LCH-abelianregular-2003})}\label{primitive-cyclic-1}
A primitive permutation group $G$ of degree $n$  contains a
cyclic regular subgroup if and only if one of the  following holds.
 \begin{enumerate}[{\rm (i)}]
\item $\mathbb{Z}_p\leqslant G\leqslant \AGL(1,p)$, where $n=p$ is a prime;
\item $G=A_n$ with $n\geqslant 5$ odd, or $S_n$, where $n\geqslant 4$;
\item $\PGL(d,q)\leqslant G\leqslant \PGammaL(d,q)$ and $n=(q^d-1)/(q-1)$;
\item $(G,n)=(\PSL(2,11),11)$, $(M_{11},11)$, $(M_{23},23)$.
\end{enumerate}

Moreover, in cases {\rm (ii)}--{\rm (iv)} $G$ is $2$-transitive.
\end{lemma}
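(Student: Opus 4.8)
The plan is to reduce the statement to the classification of finite $2$-transitive groups and then to inspect each family for a regular cyclic subgroup. Write $G\leqslant\Sym(\Omega)$ with $|\Omega|=n$ and suppose $C=\langle c\rangle\cong\mathbb{Z}_n$ is regular. I would first treat the case where $n=p$ is prime: by Burnside's theorem a transitive group of prime degree is either contained in $\AGL(1,p)$ or is $2$-transitive. In the former case $C\cong\mathbb{Z}_p$ is the translation subgroup and we are in~(i); the remaining prime-degree groups are $2$-transitive and are handled by the next step. When $n$ is composite the crucial classical input is Schur's theorem, which asserts that a primitive group containing a regular cyclic subgroup of composite order is necessarily $2$-transitive. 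Hence in every situation not already covered by~(i) the group $G$ is $2$-transitive, and the problem reduces to determining which $2$-transitive groups of degree $n$ contain an $n$-cycle.

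For this I would invoke the classification of finite $2$-transitive permutation groups, a consequence of the Classification of Finite Simple Groups, which separates them into groups of \emph{affine type}, having a regular elementary abelian socle $\mathbb{Z}_p^{\,d}$ with $n=p^d$, and groups of \emph{almost simple type}, whose socle $T$ lies on a known finite list. In the almost simple case I would check each family for an element of order $n$ that acts as an $n$-cycle. For $T=A_n$ in the natural action, $S_n$ always contains an $n$-cycle while $A_n$ contains one exactly when $n$ is odd, giving~(ii). For $T=\PSL(d,q)$ acting on the points of $\PG(d-1,q)$, of degree $n=(q^d-1)/(q-1)$, a Singer cycle furnishes a regular cyclic subgroup of order $n$ inside $\PGL(d,q)$, and determining precisely which overgroups in $\PGammaL(d,q)$ still contain such a subgroup yields~(iii). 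The exceptional families $\mathrm{PSU}(3,q)$, the Suzuki groups $\mathrm{Sz}(q)$, the Ree groups ${}^2G_2(q)$ and $\mathrm{Sp}(2d,2)$ have to be excluded, which one does by showing that their maximal cyclic subgroups have order strictly less than the degree; for instance $q^2+1=(q-\sqrt{2q}+1)(q+\sqrt{2q}+1)$ is never the order of a cyclic subgroup of $\mathrm{Sz}(q)$. Finally the sporadic $2$-transitive actions are finite in number and are tested one at a time against the element orders of the groups concerned, the only survivors being $\PSL(2,11)$ and $M_{11}$ in degree $11$ and $M_{23}$ in degree $23$, which give~(iv).

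It remains to deal with the affine type. Here I would argue that a cyclic regular subgroup of order $p^d$ forces $d=1$, returning us to the prime-degree case~(i), apart from small coincidences such as $\AGL(2,2)\cong S_4$ that are already recorded under~(ii). The point is that a cyclic group of order $p^d$ with $d\geqslant2$ is not elementary abelian, so it cannot be the socle, and a short structural analysis of its intersection with the translation subgroup inside $\mathbb{Z}_p^{\,d}\rtimes\GL(d,p)$ rules it out. The ``moreover'' clause is then immediate: cases~(ii)--(iv) all emerged from the $2$-transitive branch, and the listed groups $S_n$, $A_n$, the groups with $\PGL(d,q)\leqslant G\leqslant\PGammaL(d,q)$, and the three sporadic groups are genuinely $2$-transitive, whereas case~(i) is the Burnside branch $G\leqslant\AGL(1,p)$, which need not be $2$-transitive.

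The main obstacle is not any single manipulation but the dependence on the full classification of $2$-transitive groups, and inside that classification the analysis ruling out the exceptional Lie-type and affine families. These exclusions require knowing the precise orders of the maximal cyclic (Singer-type) tori in each group and checking that none of them equals the degree, and that is where essentially all the effort lies.
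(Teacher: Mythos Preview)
The paper does not prove this lemma at all: it is stated with attributions to Jones~\cite{Jones-2002} and Li~\cite{LCH-abelianregular-2003} and used as a black box throughout. So there is no ``paper's own proof'' against which to compare your proposal.

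That said, your outline is essentially the strategy of the cited references: reduce via Burnside and Schur to the $2$-transitive case, then run through the CFSG-based list of $2$-transitive groups and test each family for a regular $n$-cycle. The sketch is broadly correct, but be aware that a couple of the exclusion arguments are more delicate than you indicate. In the affine case with $d\geqslant 2$, ruling out a regular cyclic subgroup of $\mathbb{Z}_p^{\,d}\rtimes\GL(d,p)$ is not entirely trivial when $p=2$ (indeed $\AGL(d,2)$ is itself $2$-transitive and has to be handled inside the almost simple analysis via the isomorphisms you mention), and in the almost simple case pinning down the precise range $\PGL(d,q)\leqslant G\leqslant\PGammaL(d,q)$ in~(iii) requires checking that no intermediate group between $\PSL(d,q)$ and $\PGammaL(d,q)$ that fails to contain $\PGL(d,q)$ can nevertheless contain a Singer cycle of full order $(q^d-1)/(q-1)$. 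These are the genuine technical points in Jones's and Li's arguments, and your proposal flags them only lightly.
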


The following result about primitive permutation groups  that contain an
element with exactly two equal cycles is due to M\"{u}ller.

\begin{theo}{\rm (\cite[Theorem 3.3]{Muller-2018})}\label{bicirculant-primitive-2}
Let $G$ be a primitive permutation group of degree $2n$ that contains an
element with exactly two cycles of length $n$. Then one of the following holds, where $G_0$ denotes the stabiliser of a point.
\begin{enumerate}[$(1)$]
\item  (Affine action) $\mathbb{Z}_2^m\lhd G\leqslant \AGL(m,2)$ is an affine permutation group, where $n=2^{m-1}$. Further, one of the following holds.
 \begin{enumerate}[{\rm (a)}]
\item $n= 2$, and $ G_0= \GL(2,2)$;
\item $n=2$, and $ G_0= \GL(1,4)$;
\item $n=4$, and $ G_0= \GL(3,2)$;
\item $n=8$, and $G_0\in \{\mathbb{Z}_5:\mathbb{Z}_4,\GammaL(1,16),(\mathbb{Z}_3\times \mathbb{Z}_3):\mathbb{Z}_4,\SigmaL(2,4),\GammaL(2,4),A_6,\GL(4,2),\\(S_3\times S_3):\mathbb{Z}_2,S_5,S_6,A_7\}.$\footnote{We note that \cite[Theorem 3.3]{Muller-2018} gives $|\GammaL(1,16):G_0|=3$ as one of the possibilities but there is a unique such group, namely the $\mathbb{Z}_5:\mathbb{Z}_4$ that we list here.}
\end{enumerate}
\item (Almost simple action) $S\leqslant G\leqslant \Aut(S)$ for a  nonabelian simple group $S$, and one of the following holds.
 \begin{enumerate}[{\rm (a)}]
\item  $n\geqslant 3$ and  $A_{2n}\leqslant G\leqslant S_{2n}$ in its natural action;
\item $n=5$ and $A_5\leqslant G\leqslant S_5$ in the action on the set of  $2$-subsets of $\{1,2,3,4,5\}$;
\item $n=(q^d-1)/2(q-1)$  and $\PGL(d,q)\leqslant G\leqslant  \PGammaL(d,q)$ for some odd  prime power $q$ and $d\geqslant 2$ even;
\item $n=11$ and $M_{22}\leqslant G\leqslant \Aut(M_{22})$;
\item $n=6$ and $G=M_{12}$;
\item $n=12$ and $G=M_{24}$.
\end{enumerate}
\end{enumerate}
\end{theo}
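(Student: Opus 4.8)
The plan is to prove the classification by combining the O'Nan--Scott theorem with the classification of finite simple groups, with the whole argument organised around the cyclic subgroup that the hypothesis produces. First I would record the basic structural fact: an element $g$ with exactly two cycles of length $n$ on $2n$ points has order $n$ and acts semiregularly (without fixed points), so that $C:=\langle g\rangle\cong\mathbb{Z}_n$ is a cyclic subgroup of order exactly half the degree, its two cycles being its two orbits. Conversely, any semiregular cyclic subgroup of order $n$ with two orbits supplies such an element. The entire argument is then driven by the observation that a cyclic subgroup of order half the degree is extremely restrictive in a primitive group, in direct analogy with the Jones--Li classification of primitive groups with a \emph{regular} cyclic subgroup (order equal to the degree) recalled in Lemma~\ref{primitive-cyclic-1}.

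Next I would run through the O'Nan--Scott types of a primitive group $G$ of degree $2n$ with socle $N$. For the types whose socle is a product $T^{k}$ of a nonabelian simple group with $k\geqslant 2$ --- diagonal, product action, their holomorphic and compound variants, and twisted wreath --- the degree is $|T|^{k-1}$, $b^{k}$ or $|T|^{k}$, whereas the order of any element of $G$ is at most $\exp(\Aut(T))$ times a factor of size at most $k$ coming from the permutation of the $k$ coordinates. A short estimate shows this maximal element order is far smaller than half the degree once $|T|$ (or the base degree $b$) is not tiny, and the finitely many genuinely small configurations are checked directly; this eliminates all of these types and leaves only the affine and almost simple cases, which are exactly the two families in the statement.

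For the affine case the socle is elementary abelian of order $2n$, forcing $2n=2^{m}$ and $n=2^{m-1}$, with $\mathbb{Z}_2^{m}\lhd G\leqslant\AGL(m,2)$ and point stabiliser $G_0\leqslant\GL(m,2)$. Here I would translate the two-cycle condition into linear algebra over $\GF(2)$: an affine map $v\mapsto v^{x}+t$ with $x\in G_0$ is a product of two $2^{m-1}$-cycles precisely when it is semiregular of order $2^{m-1}$ with two orbits on $\GF(2)^{m}$, a condition on the rational canonical form of $x$ together with $t$. This both forces the index $|\GL(m,2):G_0|$ to be small and caps $m$, reducing to finitely many dimensions; the surviving cases are then identified by inspecting the cyclic-plus-translation structure for the larger $m$ and by direct computation in Magma for the small ones, producing the list in~(1) and in particular the reason the dimensions stop at $m=4$.

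The almost simple case is where the real work lies, and I expect it to be the main obstacle. Here $S\leqslant G\leqslant\Aut(S)$ with $G$ primitive of degree $2n$ and possessing a semiregular cyclic subgroup $C$ of order $n$ equal to half the degree. I would first invoke (or establish, via CFSG) the classification of primitive almost simple groups containing a cyclic subgroup of order at least half the degree; this leaves only a short candidate list: the natural actions of $A_{2n}$ and $S_{2n}$, where $C$ is generated by a product of two disjoint $n$-cycles, an automatically even permutation, giving~(2)(a) for $n\geqslant 3$; the action of $A_5\cong\PSL(2,4)$ on the $2$-subsets of $\{1,\dots,5\}$, giving~(2)(b); the projective actions with $\PSL(d,q)\trianglelefteq G$ on the points of $\PG(d-1,q)$, where $C$ is the index-$2$ subgroup of a Singer cycle and has exactly two orbits precisely when $\tfrac{q^{d}-1}{q-1}$ is even, i.e.\ $q$ odd and $d$ even, giving~(2)(c); and finitely many Mathieu actions. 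For each candidate I must then verify the sharp requirement that the large-order element has cycle type \emph{exactly} two $n$-cycles --- that is, is fixed-point-free with all cycles of length $n$ --- rather than merely having order $n$; this is a conjugacy-class computation carried out group by group, and it is what pins down precisely which Mathieu actions occur ($M_{12}$ of degree $12$, $M_{22}$ and $\Aut(M_{22})$ of degree $22$, $M_{24}$ of degree $24$) while excluding, for instance, $M_{11}$ and $M_{23}$. Carrying out this exact cycle-structure check across the full CFSG candidate list, rather than the coarser order-$n$ condition used to produce the list, is the delicate and laborious part of the proof.
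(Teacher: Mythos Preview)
The paper does not prove this theorem at all: it is quoted verbatim from M\"uller \cite{Muller-2018} as a black box, with only a footnote correcting one entry in the affine list. There is therefore no ``paper's own proof'' to compare your proposal against; the authors simply invoke the result and use it downstream (in Lemma~\ref{lem:2trans}, Proposition~\ref{bicirculant-quasiprimitive-th1}, and Proposition~\ref{bicir-biqp-2}).

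Your outline --- O'Nan--Scott reduction to affine and almost simple, then a CFSG-based sweep through almost simple primitive actions containing a large cyclic subgroup, with the exact two-$n$-cycle condition checked at the end --- is the natural strategy and is, in broad shape, how M\"uller's own proof proceeds. If your goal is to make the present paper self-contained you would be rederiving \cite[Theorem~3.3]{Muller-2018}, which is a substantial standalone result; the authors of this paper made the reasonable choice not to do that. One caution on your sketch: the elimination of product-action and diagonal types by the rough estimate ``maximal element order $\ll$ half the degree'' is morally right but needs care for small base degrees and small $k$, and in the almost simple case the passage from ``cyclic subgroup of order $\geqslant$ degree$/2$'' to the explicit candidate list genuinely requires the full CFSG tables (classical, exceptional, sporadic) rather than a single citation --- this is where the real length of M\"uller's argument lies.
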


\begin{lemma}\label{lem:2trans}
Let $\Gamma$ be a $G$-arc-transitive bipartite graph of order $2n$ with $n$ not a prime, such that $G^+$ acts faithfully and $2$-transitively on each bipartite half.
\begin{enumerate}[{\rm (1)}]
\item If $G^+$ contains a cyclic subgroup that is transitive on each bipartite half then $\Gamma=\K_{n,n}-n\K_2$ for some $n\geqslant 3$, $B(\PG(d-1,q))$ or $B'(\PG(d-1,q))$ for some $d\geq 3$ and $q$ a prime power.
\item If $G^+$ is almost simple and contains a cyclic subgroup with two equal sized orbits on each bipartite half then $\Gamma=\K_{12,12}$, $\K_{n,n}-n\K_2$ for some $n\geqslant 3$,  $B(\PG(d-1,q))$ or $B'(\PG(d-1,q))$ for some even $d\geq 3$ and  odd prime power $q$.
\end{enumerate}
\end{lemma}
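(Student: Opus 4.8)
The plan is to reconstruct $\Gamma$ from the single $G^+$-orbit on $\Delta_0\times\Delta_1$ that forms its edge set. Write $\Delta_0,\Delta_1$ for the two bipartite halves and let $\sigma\in G$ interchange them, so $G=\langle G^+,\sigma\rangle$ with $[G:G^+]=2$. Since $\Gamma$ is $G$-arc-transitive and $\sigma$ swaps the two directions of arcs, $G^+$ is transitive on the arcs $(x,y)$ with $x\in\Delta_0$, $y\in\Delta_1$; fixing $x_0\in\Delta_0$ and setting $H=(G^+)_{x_0}$, this says the neighbourhood $\Gamma(x_0)$ is a single $H$-orbit on $\Delta_1$. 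Thus $\Gamma$ is determined by $G^+$, its two $2$-transitive actions on $\Delta_0$ and $\Delta_1$, and a choice of nontrivial $H$-orbit on $\Delta_1$, and I will use connectivity of $\Gamma$ to discard any orbit producing a perfect matching.

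First I would identify the cyclic subgroup. As $G^+$ is faithful on $\Delta_0$, the given cyclic subgroup $C$ embeds in $\Sym(\Delta_0)$; a cyclic group transitive on $n$ points is generated by an $n$-cycle, while one with two equal orbits is generated by a product of two $(n/2)$-cycles. Hence in case (1) $C$ is cyclic regular in the $2$-transitive (so primitive) group $G^+$, and Lemma \ref{primitive-cyclic-1} applies: since $n$ is not prime, the prime-degree possibilities (i) and (iv) are excluded, leaving $A_n$ ($n\geqslant 5$ odd) or $S_n$ in the natural action, or $\PGL(d,q)\leqslant G^+\leqslant\PGammaL(d,q)$ with $n=(q^d-1)/(q-1)$. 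In case (2), $C$ is generated by an element with exactly two cycles of length $n/2$, so Theorem \ref{bicirculant-primitive-2}(2) applies; the hypothesis that $G^+$ be $2$-transitive rules out case (b), whose action on $2$-subsets has rank $3$, leaving $A_n/S_n$ (natural), the projective groups with $d$ even, and the Mathieu cases $M_{22}$ ($n=22$), $M_{12}$ ($n=12$) and $M_{24}$ ($n=24$).

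It then remains to determine, for each surviving $G^+$, the possible pairs of degree-$n$ $2$-transitive actions on $(\Delta_0,\Delta_1)$ and the resulting $H$-orbit graph. When the two actions are equivalent (the generic case for $A_n/S_n$, $M_{22}$, $M_{24}$, and for the point--point or hyperplane--hyperplane pairings of the projective groups), $2$-transitivity gives $H$ exactly two orbits on $\Delta_1$: the fixed point, yielding the disconnected matching $n\K_2$ which connectivity discards, and its complement, yielding $\K_{n,n}-n\K_2$ with $n\geqslant 3$. When $\Delta_0$ is the set of points and $\Delta_1$ the set of hyperplanes of $\PG(d-1,q)$ with $d\geqslant 3$ (an inequivalent pairing available because a Singer cycle is regular, or has two equal cycles when $d$ is even, on both points and hyperplanes), the two $H$-orbits are the incident and the non-incident hyperplanes, giving $B(\PG(d-1,q))$ and $B'(\PG(d-1,q))$. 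The only other inequivalent pairings arise from the two degree-$n$ actions of $S_6/A_6$ ($n=6$) and of $M_{12}$ ($n=12$), which are interchanged by the relevant outer automorphism; here I would track its effect on cycle types. For $S_6/A_6$ no single element has two $3$-cycles (respectively is a $6$-cycle) in both actions, so the inequivalent pairing cannot support $C$ and only $\K_{6,6}-6\K_2$ survives; for $M_{12}$ the inequivalent pairing is compatible with $C$, and since the point stabiliser $M_{11}$ is transitive on the other $12$-point action, one gets $\Gamma(x_0)=\Delta_1$ and $\Gamma=\K_{12,12}$. Collecting the cases gives the two lists.

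The main obstacle is this last step for the groups admitting more than one degree-$n$ $2$-transitive action, namely $S_6$ (exotic action) and $M_{12}$: one must decide exactly which action-pairs are compatible with the cyclic-subgroup hypothesis and then compute the orbit graph. This is precisely where the stray graph $\K_{6,6}$ must be excluded and the genuine $\K_{12,12}$ produced, and it rests on careful bookkeeping of cycle types under the outer automorphisms rather than a single uniform argument. The projective incident/non-incident identification, though routine, likewise needs the $d\geqslant 3$ (respectively $d$ even) constraints to be matched against connectivity and the Singer-cycle structure.
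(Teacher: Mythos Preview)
Your proposal is correct and follows essentially the same route as the paper's proof: split into equivalent versus inequivalent $2$-transitive actions of $G^+$ on $\Delta_0$ and $\Delta_1$, obtain $\K_{n,n}-n\K_2$ in the equivalent case via $2$-transitivity and connectivity, invoke Lemma~\ref{primitive-cyclic-1} (respectively Theorem~\ref{bicirculant-primitive-2}) to list the remaining groups, identify the projective point--hyperplane pairing with $B(\PG(d-1,q))$ and $B'(\PG(d-1,q))$, and dispatch $S_6/A_6$ and $M_{12}$ by tracking cycle types under the outer automorphism. The only difference is organisational: the paper first handles equivalent actions globally and then lists only the groups admitting inequivalent actions (so $M_{22}$, $M_{24}$ and generic $A_n/S_n$ never appear explicitly), whereas you enumerate all candidates from the classification and then sort them into equivalent and inequivalent, arriving at the same conclusions.
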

\begin{proof}
Let $\Delta_0$  and $\Delta_1$ be the bipartite halves of $\Gamma$ and let $u\in \Delta_0$. If all 2-transitive actions of $G^+$ on $n$ points are equivalent then there exists $v\in \Delta_1$ such that $(G^+)_u=(G^+)_v$. Moreover, $(G^+)_u$ is transitive on both  $\Delta_1\backslash\{v\}$ and $\Delta_0\backslash\{u\}$. Hence $\Gamma=\K_{n,n}-n\K_2$ for some $n\geqslant 3$.  It remains to consider the case where the action of $G^+$ on $\Delta_0$ and $\Delta_1$ are inequivalent 2-transitive actions on $n$ points.

Suppose first that $G^+$ contains a cyclic subgroup  that is transitive on $\Delta_0$. Then recalling that $n$ is not a prime and $G^+$ is primitive on $\Delta_0$, we have from Lemma \ref{primitive-cyclic-1}, that one of the following holds:
\begin{enumerate}
\item $G^+= S_6$ and $n=6$;
\item $\PGL(d,q)\lhd G^+\leqslant \PGammaL(d,q)$ and $n=(q^d-1)/(q-1)$ with $d\geq 3$.
\end{enumerate}
 In the first case, an element acting on $\Delta_1$ as a 6-cycle acts on $\Delta_2$ as a product of a 2-cycle and a 3-cycle, and so no suitable cyclic subgroup exists.
 In the second case, $\PGL(d,q)\lhd G^+$ and $\Delta_0$ and $\Delta_1$ correspond to the set of $1$-spaces and $(d-1)$-spaces of a $d$-dimensional vector space over $\GF(q)$. Thus $\Gamma=B(\PG(d-1,q))$ or $B'(\PG(d-1,q))$.

Next suppose that $G^+$ is almost simple and contains a cyclic subgroup with two equal sized orbits  on $\Delta_0$. Then Theorem \ref{bicirculant-primitive-2} implies that one of the following holds:
\begin{enumerate}
\item $A_6\lhd G^+\leqslant S_6$ and $n=6$;
\item $\PGL(d,q)\lhd G^+\leqslant \PGammaL(d,q)$ and $n=(q^d-1)/(q-1)$ with even $d\geq 3$  and odd $q$;
\item $G^+=M_{12}$ and $n=12$.
\end{enumerate}
In the first case, if $g\in G^+$ acts on $\Delta_0$ as two disjoint 3-cycles then it has three fixed points on $\Delta_2$, a contradiction. We have also already determined the graph in the second case.
In the last case, $(G^+)_u$ is transitive on $\Delta_1$ and so $\Gamma=\K_{12,12}$.
\end{proof}

\section{Reduction result and circulants}

In this section, we  give a reduction result for the family of
arc-transitive bicirculants.
Let $\Gamma$ be a $G$-vertex-transitive  bicirculant over a cyclic subgroup $H$ of $G$. Then the first lemma presents a
relationship between the two orbits of  $H$  and the blocks for the action of $G$ on the vertex set.

\begin{lemma}\label{quo-1}
Let $\Gamma$ be a $G$-vertex-transitive   bi-Cayley graph over the  subgroup $H$ of $G$.
Let $H_0$ and $H_1$ be the two orbits of $H$ on $V(\Gamma)$ and
let $\mathcal{B}$ be a $G$-invariant partition of $V(\Gamma)$. Then the following hold.

\begin{enumerate}[$(1)$]
\item Either all elements of $\mathcal{B}$ are subsets of $H_0$ or $H_1$;
or  $B\cap H_0\neq \varnothing$ and $B\cap H_1\neq \varnothing$ for every   $B\in \mathcal{B}$.

\item If $B\in \mathcal{B}$ and  $B\cap H_i\neq \varnothing$ for some $i$, then $B\cap H_i$ is a block for $H$ on $H_i$.
\end{enumerate}

\end{lemma}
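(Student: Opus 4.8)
The plan is to prove the two parts in the order stated, deducing (1) from (2) together with a counting argument on block sizes. The starting observations, which I would record first, are that since $H\leqslant G$ the partition $\mathcal{B}$ is in particular $H$-invariant, and that $H$ fixes each orbit $H_i$ setwise (being one of its own orbits). These two facts let me analyse $H_0$ and $H_1$ separately as $H$-spaces, which is exactly what is needed.

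For part (2), fix $i$ and consider the collection $\mathcal{B}_i=\{B'\cap H_i : B'\in\mathcal{B},\ B'\cap H_i\neq\varnothing\}$. Because distinct members of $\mathcal{B}$ are disjoint and their union contains $H_i$, this collection is a partition of $H_i$. Since $H$ permutes the blocks of $\mathcal{B}$ and fixes $H_i$ setwise, it permutes the members of $\mathcal{B}_i$; hence $\mathcal{B}_i$ is an $H$-invariant partition of $H_i$, and each of its parts is a block for $H$ on $H_i$. In particular $B\cap H_i$ is such a block, which is the claim.

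For part (1), the extra input I would use is that $H$ acts \emph{regularly} on each $H_i$: it is transitive because $H_i$ is an $H$-orbit, and semiregular by the definition of a bi-Cayley graph. In a transitive action all parts of a block system have equal size, so there is a constant $a_i\geqslant 1$ with $|B\cap H_i|=a_i$ for every block meeting $H_i$. On the other hand, $G$ is transitive on $V(\Gamma)$, hence transitive on $\mathcal{B}$, so all blocks of $\mathcal{B}$ share one common size $c$. Writing $c=|B\cap H_0|+|B\cap H_1|$ with each summand in $\{0,a_i\}$ and $a_0,a_1\geqslant 1$, I then split into cases: if some block meets both $H_0$ and $H_1$ then $c=a_0+a_1$, and since $a_0,a_1>0$ the only way an arbitrary block can have intersection sizes summing to $a_0+a_1$ is $|B\cap H_0|=a_0$ and $|B\cap H_1|=a_1$, so every block meets both halves; otherwise no block meets both, so every block lies in $H_0$ or in $H_1$. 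This is precisely the stated dichotomy.

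The main subtlety to be careful about is that one cannot simply argue $G$-invariantly on each half: in a bicirculant $G$ typically swaps the two orbits of $H$, so $\{H_0,H_1\}$ need not be a $G$-invariant partition. The proof must therefore keep the two ingredients separate — the \emph{local} block structure on each $H_i$ comes only from $H$ (yielding the uniform sizes $a_i$ via regularity), while the \emph{global} common block size $c$ comes from $G$-transitivity — and reconcile them through the equation $c=a_0+a_1$. Verifying these equal-size facts (regularity of $H$ on each $H_i$ and the resulting constancy of the sizes $a_i$, and constancy of $c$) is the only point that needs genuine care; everything else is bookkeeping.
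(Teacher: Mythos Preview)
Your proof is correct, and for part~(2) it coincides with the paper's argument.  For part~(1), however, you take a genuinely different route: you deduce the dichotomy from~(2) together with a size-counting argument (constant intersection sizes $a_i$ from $H$-transitivity on each half, constant block size $c$ from $G$-transitivity on $\mathcal{B}$, and the arithmetic $c=a_0+a_1$).  The paper instead proves~(1) directly and independently of~(2): if some $B\subseteq H_i$ while another $B'$ meets both halves, choose $b\in B$ and $h\in H$ with $b^{h}\in B'\cap H_i$; then $B^{h}$ and $B'$ share a point, so $B^{h}=B'$, but $B^{h}\subseteq H_i$, a contradiction.  The paper's argument is shorter and uses only that $\mathcal{B}$ is $H$-invariant and that $H$ is transitive on each $H_i$; it avoids both the appeal to~(2) and the appeal to $G$-transitivity for constancy of the global block size.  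Your approach, on the other hand, makes the block-size structure explicit, which is in the spirit of how these intersections are used later (e.g.\ in Lemma~\ref{bicirc-quasi-1}).  One small remark: the regularity of $H$ on $H_i$ that you invoke is more than is needed---transitivity alone gives equal block sizes in an invariant partition, so the semiregularity from the bi-Cayley hypothesis plays no role here.
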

\begin{proof}  (1) Suppose that there exists some  $B\in \mathcal{B}$ such that  $B\subseteq H_i$ for some  $i\in \{0,1\}$, and assume  that there is another block $B'\in \mathcal{B}$ such that  $B'\cap H_0\neq \varnothing$ and $B'\cap H_{1}\neq \varnothing$.
Then for each vertex $b\in B$, there exists $h\in H$ such that  $b^{h}\in B'\cap H_i$, as  $H$ acts transitively on  $H_i$. Thus $b^{h}\in B'\cap B^{h}$. Since $B^{h}\in \mathcal{B}$
and $\mathcal{B}$ is a block system,  we get $B'=B^{h}\subseteq H_i$, a contradiction. Therefore either all elements of $\mathcal{B}$ are subsets of  $H_0$ or $H_1$, or the intersection of each $B\in \mathcal{B}$ with $H_0$ and $H_1$ is non-empty.

(2) Let $B\in \mathcal{B}$ and suppose that $B\cap H_i\neq \varnothing$ for some $i$. Let $h\in H$. Then $B^{h}= B$ or $B^{h}\cap  B=\varnothing$. Since $(B\cap H_i)^{h}\subseteq H_i$, it follows that
$(B\cap H_i)^{h}=B\cap H_i$ or $(B\cap H_i)^{h}\cap (B\cap H_i)=\varnothing$ and so $B\cap H_i$ is a block for $H$ on $H_i$.
 \end{proof}

\begin{lemma}\label{bicirc-qu-1}
Let $\Gamma$ be a  $G$-vertex-transitive  bi-Cayley graph   with at least three vertices  over  the abelian  subgroup $H$ of $G$, and let
$H_0$ and $H_1$ be the two orbits of $H$ on $V(\Gamma)$.
Let $N$ be a  normal subgroup of $G$ maximal with respect to having at least $3$ orbits. Then $\Gamma_N$ is a $G/N$-vertex-transitive graph, $G/N$ is faithful on $V(\Gamma_N)$
and the following hold.

\begin{enumerate}[$(1)$]
\item If   each $N$-orbit is a subset of  either $H_0$ or $H_1$, then  $\Gamma_N$ is a  bi-Cayley graph over $HN/N$.
 In particular, if $H$ is a cyclic group, then $\Gamma_N$  is a bicirculant of $HN/N$.

\item If  each $N$-orbit intersects both $H_0$ and $H_1$ non-trivially, then  $\Gamma_N$  is isomorphic to a Cayley graph  of $HN/N$.
 In particular, if $H$ is a cyclic group, then $\Gamma_N$  is isomorphic to a circulant of $HN/N$.
\end{enumerate}

\end{lemma}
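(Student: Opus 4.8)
The plan is to work throughout with the $G$-invariant partition $\mathcal{B}$ of $V(\Gamma)$ into $N$-orbits, so that $V(\Gamma_N)=\mathcal{B}$ and $G$ acts on $\mathcal{B}$ with $N$ in its kernel. Since $G$ is transitive on $V(\Gamma)$ and $\mathcal{B}$ is $G$-invariant, $G$ is transitive on $\mathcal{B}$, so $\Gamma_N$ is $G/N$-vertex-transitive. For faithfulness I would let $\widetilde{N}$ be the kernel of the $G$-action on $\mathcal{B}$; then $\widetilde{N}\lhd G$ and $N\leqslant\widetilde{N}$, and since every $\widetilde{N}$-orbit lies inside a single block, $\widetilde{N}$ has at least $|\mathcal{B}|\geqslant 3$ orbits. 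The maximality of $N$ forces $\widetilde{N}=N$, so $G/N$ is faithful on $V(\Gamma_N)$. The same computation records the fact I will use repeatedly, namely that the kernel of the $H$-action on $\mathcal{B}$ is $H\cap\widetilde{N}=H\cap N$: an element of $H$ fixes every block if and only if it lies in $N$.

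I would then treat part (2), which I expect to be the easier case. Here each $N$-orbit meets both $H_0$ and $H_1$, so by Lemma~\ref{quo-1}(2) the sets $B\cap H_0$ $(B\in\mathcal{B})$ form an $H$-invariant partition of $H_0$; as $H$ is transitive on $H_0$ it permutes these parts transitively, and via the $H$-equivariant bijection $B\mapsto B\cap H_0$ this shows $H$, hence $HN/N$, is transitive on $\mathcal{B}$. Because $H$ is abelian and transitive on $\mathcal{B}$, all its block-stabilisers coincide with the kernel $H\cap N$, so the stabiliser of any block in $HN/N$ is $(H\cap N)N/N=1$. Thus $HN/N$ is regular on $V(\Gamma_N)$ and $\Gamma_N$ is a Cayley graph of $HN/N\cong H/(H\cap N)$, a circulant when $H$ is cyclic.

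Finally, part (1). Now each block lies in $H_0$ or in $H_1$, so $N$, like $H$, fixes $H_0$ and $H_1$ setwise; hence $HN$ is transitive on each of $H_0,H_1$, giving $HN/N$ exactly two orbits $\mathcal{B}_0,\mathcal{B}_1$ on $\mathcal{B}$ (the blocks inside $H_0$ and inside $H_1$). These have equal size, since all blocks have a common size (as $N\lhd G$ with $G$ transitive) and $|H_0|=|H_1|$. The remaining, and main, obstacle is to prove that $HN/N$ is \emph{semiregular}, which is exactly the assertion that the setwise stabiliser $H_B$ of a block $B$ lies in $N$. Writing $K$ and $K'$ for the subgroups of $H$ stabilising the blocks in $H_0$ and in $H_1$ respectively (well defined since $H$ is abelian and transitive on each of $\mathcal{B}_0,\mathcal{B}_1$), the faithfulness computation of the first paragraph gives $H\cap N=K\cap K'$, so semiregularity reduces to the coincidence $K=K'$. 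For the cyclic case underlying the statement this is immediate: the blocks in either half have the common size $|B|$, and under the regular action of the cyclic group $H$ on a half the block systems correspond to subgroups, so $K$ and $K'$ are both the unique subgroup of $H$ of order $|B|$. Hence $H_B=K=H\cap N\leqslant N$, the stabiliser $H_BN/N$ is trivial, and $\Gamma_N$ is a bi-Cayley graph over $HN/N\cong H/(H\cap N)$, which is a bicirculant when $H$ is cyclic. I expect the identity $K=K'$ to be the crux: it is precisely the step that forces the two halves to carry the same block system, and it is exactly here that the cyclic (bicirculant) structure enters, since for a general abelian $H$ the two block systems need not be defined by the same subgroup.
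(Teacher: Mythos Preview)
Your approach mirrors the paper's: the faithfulness argument and the proof of part~(2) are essentially identical to what the paper does (using Lemma~\ref{quo-1}(2) and the fact that for an abelian transitive action all point-stabilisers coincide with the kernel).

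For part~(1) you have actually been more careful than the paper. The paper simply asserts that ``since $H$ is transitive on both $H_0$ and $H_1$, it follows that $HN/N$ is regular on both sets $\{B_1,\ldots,B_r\}$ and $\{B_{r+1},\ldots,B_{2r}\}$,'' implicitly invoking ``abelian $+$ transitive $\Rightarrow$ regular''. That implication requires the action to be faithful on the orbit in question, and faithfulness of $HN/N$ on $\mathcal{B}_0$ is exactly your condition $K=K'$. Your cyclic-group argument (uniqueness of the subgroup of order $|B|$) supplies precisely the missing step.

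Your suspicion that $K=K'$ can fail for non-cyclic abelian $H$ is well founded. Let $H=\langle a,b\rangle\cong\mathbb{Z}_2^2$ act regularly on each of two copies $H_0,H_1$ of itself, let $n$ act as translation by $a$ on $H_0$ and by $b$ on $H_1$, and let $g$ interchange $H_0$ and $H_1$ via the automorphism swapping $a\leftrightarrow b$. Then $G=\langle H,n,g\rangle\cong D_8\times\mathbb{Z}_2$ acts transitively on the eight points (take $\Gamma=\K_8$), and $N=\langle n\rangle\lhd G$ is maximal with at least three orbits: the smallest normal subgroup properly containing it is $\langle n,ab\rangle$, which is already transitive on each $H_i$. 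Here $K=\langle a\rangle\neq\langle b\rangle=K'$, so $|HN/N|=4$ while $|\mathcal{B}_0|=2$, and $\Gamma_N$ is \emph{not} a bi-Cayley graph over $HN/N$. Thus the lemma as stated appears to fail for general abelian $H$; fortunately the paper only invokes it for cyclic $H$ (in Theorem~\ref{bic-reduction-th2}), where your argument is exactly what is needed.
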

\begin{proof}
Let $K$ be the kernel of the action of $G$ on $V(\Gamma_N)$. Then $N\leqslant K$. If $N<K$, then
$K$ has 1 or 2 orbits on $V(\Gamma)$, which is impossible. Thus  $N=K$. Since $\Gamma$ is  $G$-vertex-transitive, $\Gamma_N$ is  $G/N$-vertex-transitive.
Let $\mathcal{B}=\{B_1,\ldots,B_t\}$ be the set of $N$-orbits.

(1) Suppose that each $N$-orbit is a subset of  either $H_0$ or $H_1$.
Then $|B_i|$ divides $|H|$, and without loss of generality, we may assume that   $H_0=B_1\cup \cdots\cup B_r$ and
$H_1=B_{r+1}\cup \cdots\cup B_{2r}$ where $2r=t$.
Since $H$ is  an abelian group,
$HN/N\cong H/(H\cap N)$ is an abelian group. Since $H$ is transitive on both $H_0$ and $H_1$,  it follows that  $HN/N$ is regular  on both sets $\{B_1,\ldots,B_r\}$ and $\{B_{r+1},\ldots,B_{2r}\}$.
 Therefore,
$\Gamma_N$ is a bi-Cayley graph  of $HN/N$.

(2) Suppose that each $N$-orbit intersects both $H_0$ and $H_1$ non-trivially.
Then  by   Lemma \ref{quo-1}(2),  for each $B\in\mathcal{B}$ we have that $B\cap H_i$ is a block  for $H$ on $H_i$ for each  $i\in \{0,1\}$.
As $H$ is transitive on  $H_0$,   $HN/N$ is transitive on $\{B_1\cap H_0,\ldots,B_t\cap H_0\}$.   Since  $N$ is the kernel of the  action of $G$ on $V(\Gamma_N)$,
$H\cap N$ is the kernel of the  action of $H$ on $\{B_1\cap H_0,\ldots,B_t\cap H_0\}$.
Since $H$ is an abelian group,
it follows that $HN/N$ acts  regularly  on $\{B_1\cap H_0,\ldots,B_t\cap H_0\}$. Thus $HN/N$ acts regularly  on $\mathcal{B}$, and so
$\Gamma_N$ is  a Cayley graph  of $HN/N$.
In particular, if $H$ is a cyclic group, then
 $\Gamma_N$ is    a   circulant of $HN/N$.  \end{proof}

We are ready to give a  reduction result.

\begin{theo}\label{bic-reduction-th2}
Let $\Gamma$ be a connected $G$-arc-transitive  bicirculant  with at least three vertices over   the    cyclic subgroup $H$ of $G$.
Let $N$ be a  normal subgroup of $G$ maximal with respect to having at least $3$ orbits. Then
$\Gamma$ is an $r$-cover of $\Gamma_N$ where $r$ divides the valency of $\Gamma$,
$\Gamma_N$ is $G/N$-arc-transitive and  is either a circulant or a bicirculant over $HN/N$, and $G/N$ is faithful and either quasiprimitive or bi-quasiprimitive on $V(\Gamma_N)$.

\end{theo}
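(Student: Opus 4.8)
The plan is to combine the structural information about $\Gamma_N$ already contained in Lemmas \ref{quo-1} and \ref{bicirc-qu-1} with a few standard normal-quotient arguments, and then to read off the (bi-)quasiprimitivity of $G/N$ from the maximality of $N$. Write $\mathcal{B}$ for the set of $N$-orbits, so $\mathcal{B}=V(\Gamma_N)$, and recall that $G$ vertex-transitive makes all $N$-orbits equal in size. Several assertions are immediate from the cited lemmas: Lemma \ref{bicirc-qu-1} already gives that $N$ is the kernel of $G$ on $V(\Gamma_N)$, hence $G/N$ is faithful, and that $HN/N$ is cyclic. By Lemma \ref{quo-1}(1) exactly one of its two alternatives holds, and the two cases of Lemma \ref{bicirc-qu-1} then state precisely that $\Gamma_N$ is a bicirculant over $HN/N$ when each block lies inside one of $H_0,H_1$, and a circulant over $HN/N$ when each block meets both $H_0$ and $H_1$. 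For arc-transitivity of $\Gamma_N$ I would argue directly: given two arcs $(B_1,B_2)$, $(B_3,B_4)$ of $\Gamma_N$, lift each to an arc of $\Gamma$ with endpoints in the appropriate blocks, apply $G$-arc-transitivity to map the first lift to the second by some $g\in G$, and note that since $g$ preserves $\mathcal{B}$ the image $gN$ sends $(B_1,B_2)$ to $(B_3,B_4)$.

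Next I would establish the $r$-cover statement. The crucial preliminary is that each block is an independent set. Indeed, if some edge of $\Gamma$ joined two vertices of a common block $B$, then for an arbitrary edge $\{x,y\}$ arc-transitivity would supply $g\in G$ carrying the first edge to $\{x,y\}$, whence $x,y$ would lie in the single block $B^g$; thus \emph{every} edge of $\Gamma$ would have both endpoints in one block, and as there are at least three blocks this would disconnect $\Gamma$, contradicting connectedness. Granting independence, fix an edge $\{B_i,B_j\}$ of $\Gamma_N$ and $v,v'\in B_i$; since $N$ is transitive on $B_i$ and fixes $B_j$ setwise, an element $n\in N$ with $v^n=v'$ maps $\Gamma(v)\cap B_j$ bijectively onto $\Gamma(v')\cap B_j$, so $|\Gamma(v)\cap B_j|$ depends only on $(B_i,B_j)$. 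Arc-transitivity of $G/N$ then transports any edge of $\Gamma_N$ to any other, forcing this count to be a single constant $r$, so $\Gamma$ is an $r$-cover of $\Gamma_N$. Finally, because the blocks are independent, the neighbours of any $v\in B$ are partitioned among the blocks adjacent to $B$ in $\Gamma_N$, each contributing exactly $r$; hence the valency of $\Gamma$ equals $r$ times the valency of $\Gamma_N$, and in particular $r$ divides the valency of $\Gamma$.

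For the last assertion, the (bi-)quasiprimitivity of $G/N$ follows from the maximality of $N$. A nontrivial normal subgroup of $G/N$ has the form $M/N$ with $N\leqslant M\lhd G$ and $M\neq N$, and because $N\leqslant M$ its orbits on $\mathcal{B}$ correspond bijectively to the $M$-orbits on $V(\Gamma)$. By the maximal choice of $N$ among normal subgroups with at least three orbits, the strictly larger $M$ has at most two orbits on $V(\Gamma)$, so every nontrivial normal subgroup of $G/N$ has at most two orbits on $V(\Gamma_N)$. Since $G/N$ is transitive, it is quasiprimitive if all such subgroups are transitive and bi-quasiprimitive otherwise, which is exactly the dichotomy claimed.

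The only genuinely graph-theoretic ingredient, and the step I would be most careful about, is the independence of the blocks: it is what both yields the clean $r$-cover conclusion with $r$ dividing the valency and guarantees that $\Gamma_N$ is a simple graph. Everything else is bookkeeping with the cited lemmas and the standard correspondence between normal subgroups above $N$ and normal subgroups of $G/N$.
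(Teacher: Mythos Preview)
Your proposal is correct and follows essentially the same route as the paper's proof: both invoke Lemmas~\ref{quo-1} and~\ref{bicirc-qu-1} for faithfulness and the circulant/bicirculant dichotomy, establish block independence via arc-transitivity and connectedness, obtain the constant $r$ by combining $N$-transitivity on blocks with arc-transitivity, and read off (bi-)quasiprimitivity from the maximality of $N$. The only cosmetic difference is that you deduce the constancy of $r$ across edges of $\Gamma_N$ from the already-established arc-transitivity of $G/N$, whereas the paper argues directly with an element of the vertex stabiliser $G_{b_1}$ in $\Gamma$; both are valid and amount to the same thing.
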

\begin{proof} Since $N$ is a  normal subgroup of $G$ maximal with respect to having at least $3$ orbits, it follows from  Lemma \ref{bicirc-qu-1} that $G/N$ is faithful  on $V(\Gamma_N)$. Moreover, all normal subgroups of $G/N$ are transitive or have two orbits on $V(\Gamma_N)$. Thus $G/N$ is quasiprimitive or bi-quasiprimitive on $V(\Gamma_N)$.
Let $\mathcal{B}=\{B_1,\ldots,B_t\}$ be the set of $N$-orbits, so that $t\geqslant 3$. Since $\Gamma$ is $G$-arc-transitive, it follows that $\Gamma_N$ is a $G/N$-arc-transitive graph,
and  each induced subgraph $[B_i]$ is  empty.  For   an arc $(B_1,B_2)$ of $\Gamma_N$, there exists     a vertex $b_1$ of $\Gamma$ such that  $b_1\in B_1$ and
$\Gamma(b_1)\cap B_2\neq \varnothing$. For any $b_1'\in B_1$, we have $b_1'=b_1^{n}$ for some $n\in N$. Hence
$$\Gamma(b_1')\cap B_2=\Gamma(b_1)^n\cap B_2=(\Gamma(b_1)\cap B_2)^n\neq \varnothing.$$
Thus  the number of  vertices in $B_2$ adjacent to a given  vertex of $B_1$ is constant.
If  $\Gamma(b_1)\subseteq B_2$, then by the connectivity of $\Gamma$, we have $t=2$, a contradiction.
Hence  $\Gamma(b_1)\nsubseteq B_2$  and so there exists  $B_3\in \mathcal{B}\backslash\{B_1,B_2\}$ such that $B_1$ and $B_3$ are adjacent in $\Gamma_N$.
Let $b_2\in B_2$ and $b_3\in B_3$ such that $(b_1,b_2)$ and $(b_1,b_3)$ are  arcs of $\Gamma$. Since   $\Gamma$ is $G$-arc-transitive, there exists $g\in G_{b_1}$ such that
$b_2^g=b_3$. Since $B_1,B_2,B_3$ are blocks of $G$, it follows that $B_1^g=B_1$ and $B_2^g=B_3$. Thus $(\Gamma(b_1)\cap B_2)^g=\Gamma(b_1)\cap B_3$, and so
$|\Gamma(b_1)\cap B_2|=|\Gamma(b_1)\cap B_i|$ for any $i$ such that $(B_1,B_i)$ is an arc of $\Gamma_N$. We also showed above that
$|\Gamma(b_1)\cap B_2|=|\Gamma(b_1')\cap B_2|$ for all $b_1,b_1'\in B_1$.
 Therefore,
$\Gamma$ is an $r$-cover of $\Gamma_N$ where $r=|\Gamma(b_1)\cap B_2|$ is a divisor of $|\Gamma(b_1)|$.

Let $H_0$ and $H_1$ be the two orbits of $H$ on $V(\Gamma)$. If  $B_1\subset H_i$ for some $i\in \{0,1\}$, then by Lemma \ref{quo-1}(1),
for each $B'\in \mathcal{B}$, either $B'\subset H_0$ or $B'\subset H_1$.
It follows from Lemma \ref{bicirc-qu-1} that $\Gamma_N$ is a bicirculant over $HN/N\cong H/(H\cap N)$.
Finally,  suppose that
$B\cap H_0\neq \varnothing$ and $B\cap H_1\neq \varnothing$ for some $B\in \mathcal{B}$.
Then again by Lemma \ref{quo-1}(1), $B'\cap H_0\neq \varnothing$ and $B'\cap H_1\neq \varnothing$ for every $B'\in \mathcal{B}$.
Hence, by  Lemma \ref{bicirc-qu-1},  $\Gamma_N$  is isomorphic to a  circulant of $HN/N$.
\end{proof}

Our next  proposition determines the family of arc-transitive circulants that are  vertex quasiprimitive or vertex bi-quasiprimitive.

\begin{prop}\label{circ-biqp-th1}
Let $\Gamma$ be a connected $G$-arc-transitive circulant over a cyclic subgroup $H$ of $G$. Then
 the following statements hold:

\begin{enumerate}[$(1)$]
\item If  $G$ is quasiprimitive on $V(\Gamma)$, then  $\Gamma$ is one of the following two graphs:

 \begin{itemize}
\item[(1.1)]  a complete graph;

\item[(1.2)]  $\Cay(p,e)$ where $e$ is an even integer dividing  $p-1$.

\end{itemize}

\item If $G$ is bi-quasiprimitive on $V(\Gamma)$, then  $\Gamma$ is one of the following three graphs:

 \begin{itemize}
\item[(2.1)]  $\K_{n,n}$;

\item[(2.2)] $ \K_{n,n}-n\K_2$ where $n$ is an odd integer;

\item[(2.3)] $G(2p,r)$ where   $p$ is a prime and $r$ is an even divisor of $p-1$.

\end{itemize}
\end{enumerate}

\end{prop}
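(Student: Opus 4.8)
The plan is to treat the two parts separately, leaning on the classification results already assembled. For part (1), since $\Gamma$ is a circulant the cyclic subgroup $H$ is a cyclic regular subgroup of $G$, and by \cite[Theorem 3.1]{SLZ-2014} a quasiprimitive group with a regular cyclic subgroup is primitive; thus $G$ is primitive with a cyclic regular subgroup and Lemma \ref{primitive-cyclic-1} applies. In cases (ii)--(iv) of that lemma $G$ is $2$-transitive on $V(\Gamma)$, so the only connected non-empty $G$-arc-transitive graph is the complete graph, giving (1.1). In case (i) we have $\mathbb{Z}_p\leqslant G\leqslant\AGL(1,p)$ with $n=p$, so $\Gamma=\Cay(\mathbb{Z}_p,S)$ with $S=\Gamma(0)$ a single orbit of the cyclic group $G_0\leqslant\Aut(\mathbb{Z}_p)$; after conjugating by a multiplier I may take $S=L(p,e)$ with $e=|G_0|$. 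If $e=p-1$ this is $\K_p$; otherwise the condition $S=S^{-1}$ forces the involution $-1$ to lie in the order-$e$ subgroup, i.e. $e$ is even, giving $\Cay(p,e)$ as in (1.2).

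For part (2), I would first show that $\Gamma$ is bipartite. Bi-quasiprimitivity supplies a normal subgroup with exactly two orbits $\Delta_0,\Delta_1$; these form a $G$-invariant partition, and as $G$ is vertex-transitive it must interchange them. The property of an arc being ``within a part'' or ``between the parts'' is then $G$-invariant, so by arc-transitivity all arcs have one type; connectedness rules out the within-part type, whence $\Gamma$ is bipartite with halves $\Delta_0,\Delta_1$, each of size $m$, so $|V(\Gamma)|=2m$. Writing $H=\langle x\rangle$ with $|x|=2m$, the halves are the two orbits of the index-two subgroup $\langle x^2\rangle$, which therefore acts as a cyclic regular group of order $m$ on each half and lies in $G^+$ (the stabiliser of the two halves). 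If $G^+$ is unfaithful on each half then Lemma \ref{lem:Knn} gives $\Gamma=\K_{m,m}$, which is (2.1); so I may assume $G^+$ is faithful on each half.

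Next I would split on whether $m$ is prime. If $m=p$ is prime then $|V(\Gamma)|=2p$ and Theorem \ref{bicirc-2p-1} lists all possibilities; bipartiteness discards $\K_{2p}$ and $G(2,p,r)$, and as $\Gamma$ is a circulant Lemma \ref{basiccirc-1} discards the Petersen graph and its complement, $B(\PG(n-1,q))$, $B'(\PG(n-1,q))$ and $B'(H(11))$, leaving $\K_{p,p}$ and $G(2p,r)$, with $r$ even forced by Lemma \ref{basiccirc-g2p-1}; this gives (2.1) and (2.3). If $m$ is not prime, the crux is to prove that $G^+$ acts primitively on $\Delta_0$, and I would argue by contradiction. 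A nontrivial $G^+$-block system $\mathcal{C}_0$ on $\Delta_0$ with $k$ blocks ($1<k<m$, so $k\mid m$) is, being $\langle x^2\rangle$-invariant, the orbit partition of the unique subgroup $\langle x^{2k}\rangle$ of order $m/k$; applying $x$ produces a matching block system $\mathcal{C}_1$ on $\Delta_1$, and $\mathcal{C}_0\cup\mathcal{C}_1$ is a $G$-invariant partition into $2k$ parts. The kernel of $G$ on these $2k$ parts is a normal subgroup with at least $2k>2$ orbits, so bi-quasiprimitivity forces it to be trivial; but $\langle x^{2k}\rangle\neq 1$ fixes every part setwise and hence lies in this kernel, a contradiction. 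Thus $G^+$ is primitive on $\Delta_0$ with the non-prime-order cyclic regular subgroup $\langle x^2\rangle$, so by Lemma \ref{primitive-cyclic-1} (the affine case being excluded since $m$ is not prime) it is $2$-transitive on each half. Lemma \ref{lem:2trans}(1) then yields $\K_{m,m}-m\K_2$, $B(\PG(d-1,q))$ or $B'(\PG(d-1,q))$; the last two are not circulants by Lemma \ref{basiccirc-1}, and $\K_{m,m}-m\K_2$ is a circulant only when $m$ is odd, giving (2.2). The main obstacle is exactly this primitivity argument: showing that bi-quasiprimitivity of $G$ upgrades the merely transitive cyclic action of $\langle x^2\rangle$ on a half into a primitive, hence $2$-transitive, action of $G^+$.
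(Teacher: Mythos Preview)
Your proof is correct and follows essentially the same route as the paper: in both parts you invoke the same classification ingredients (\cite[Theorem~3.1]{SLZ-2014}, Lemma~\ref{primitive-cyclic-1}, Lemma~\ref{lem:Knn}, Theorem~\ref{bicirc-2p-1}, Lemma~\ref{lem:2trans}) in the same order and discard the non-circulant outcomes via Lemmas~\ref{basiccirc-1} and~\ref{basiccirc-g2p-1}. The only noteworthy difference is in part~(2): you establish primitivity of $G^+$ on each bipartite half directly by a block-system argument (identifying the blocks as orbits of $\langle x^{2k}\rangle$ and showing this subgroup lies in the kernel on the combined partition), whereas the paper first proves $G^+$ is quasiprimitive on each half using an intransitive normal subgroup and then upgrades to primitivity by a second appeal to \cite[Theorem~3.1]{SLZ-2014}; your shortcut is valid and marginally cleaner.
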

\begin{proof}
(1) Suppose that       $G$ is quasiprimitive on $V(\Gamma)$. Then by   \cite[Theorem 1.2]{LP-circulant-2012},
$G$ is primitive on $V(\Gamma)$.  Moreover, $G$ is  listed
in  Lemma \ref{primitive-cyclic-1}: either $|V(\Gamma)|=p$ is a prime and $G\leqslant \AGL(1,p)$, or $G$ is $2$-transitive on $V(\Gamma)$. If $G$ is $2$-transitive on $V(\Gamma)$, then $\Gamma$ is a complete graph, and case (1.1) holds;
if $|V(\Gamma)|=p$ is a prime and $G\leqslant \AGL(1,p)$, then
by  Chao \cite{Chao-1971}, $\Gamma$ is a Cayley graph $\Cay(p,e)$ where $e$ is an even integer dividing  $p-1$, that is, (1.2) occurs.

(2) Suppose that $G$ is bi-quasiprimitive on $V(\Gamma)$. Then $G$ has a minimal normal subgroup  that has exactly two orbits  on $V(\Gamma)$, say $\Delta_0$ and $\Delta_1$. Hence  $|H|$ is even.
Since $\Gamma$ is  $G$-arc-transitive and connected, it follows that each $\Delta_i$ does not contain any edge of $\Gamma$, and so
$\Gamma$ is a bipartite graph with  $\Delta_0$ and $\Delta_1$ being  the two bipartite halves.
Let $G^+=G_{\Delta_0}=G_{\Delta_1}$.
Since $G$ is transitive on the vertex set, it follows that  $G^+$ is a normal subgroup of $G$ of index 2, and so $G=\langle G^+, \sigma \rangle$	for some element $\sigma\in G\setminus G^+$ with $\sigma^2\in G^+$.
Moreover, $\Delta_0^{\sigma}=\Delta_1$ and $\Delta_1^{\sigma}=\Delta_0$.
Since
$H$ is cyclic and regular on $V(\Gamma)$,  the group $H^+=H\cap G^+$ is transitive and so regular on each $\Delta_i$.

Suppose that $G^+$ is not faithful on $\Delta_0$ and $\Delta_1$. Then by Lemma \ref{lem:Knn}, $\Gamma\cong \K_{|H|/2,|H|/2}$, and (2.1) holds. In the remainder,  we assume that
$G^+$ is  faithful on $\Delta_0$ and $\Delta_1$.
Suppose first that  $G^+$ is not quasiprimitive on each $\Delta_i$.
Let $N$ be a maximal intransitive normal subgroup of $G^+$ on
$\Delta_0$. Then $N^\sigma$ is a maximal intransitive normal subgroup of $G^+$ on
$\Delta_1$. Let $\mathcal{B}_0$ be the set of $N$-orbits on
$\Delta_0$, and let $\mathcal{B}_1$ be the set of $N^\sigma$-orbits on
$\Delta_1$. Then $|\mathcal{B}_0|=|\mathcal{B}_1|\geqslant 2$. Let $\mathcal{B}=\mathcal{B}_0\cup \mathcal{B}_1$. Then
as $N^{\sigma^2}=N$,
$\mathcal{B}$ is a $G$-invariant partition of $V(\Gamma)$.
Let $K$ be the kernel of $G$ acting on $\mathcal{B}$. Then $K$ is a normal subgroup of $G$ with at least 4 orbits on $V(\Gamma)$. Since
$G$ is bi-quasiprimitive, we have $K=1$.  Thus $G$ and hence $H$ act faithfully on $\mathcal{B}$. Since $H$ is abelian and transitive on $V(\Gamma)$ it follows that $H$
is transitive and so regular  on  $\mathcal{B}$. Thus $|\mathcal{B}|=|H|=|V(\Gamma)|$, a contradiction.
Hence  $G^+$ is quasiprimitive on each $\Delta_i$.

Since
$H$ is cyclic and transitive on $V(\Gamma)$, $H^+$ is cyclic,  transitive and so regular on each $\Delta_i$. It follows from \cite[Theorem 1.2]{LP-circulant-2012}  that  $G^+$ is primitive on $\Delta_i$.
Then by Lemma \ref{primitive-cyclic-1},  either $|H|=2p$ for some prime $p$ and $G^+\leqslant \AGL(1,p)$, or
$G^+$ is $2$-transitive on $\Delta_i$.

If  $|H|=|V(\Gamma)|=2p$ for some prime $p$, then as $\Gamma$ is  connected $G$-arc-transitive,   $\Gamma$ is one of the graphs listed in  Theorem \ref{bicirc-2p-1}.
By Lemma \ref{basiccirc-1}, $B'(H(11))$, $B(\PG(n-1,q))$ and  $B'(\PG(n-1,q))$ are not circulants, while $\K_{2p}$, $G(2,p,r)$, the Petersen graph and  its complement are not bipartite. Thus $\Gamma$ is $\K_{p,p}$  in which case (2.1) holds, or  $\Gamma$ is $G(2p,r)$, in which case  $r$ is even by Lemma \ref{basiccirc-g2p-1},  that is, (2.3) holds.

Finally, assume that $|H|=|V(\Gamma)|\neq 2p$ for any prime $p$. Then  $G^+$ is 2-transitive on both $\Delta_0$ and $\Delta_1$, hence  $\Gamma$ is given by Lemma \ref{lem:2trans} (1). However, by Lemma \ref{basiccirc-1}, the graphs $B(\PG(d-1,q))$ and $B'(\PG(d-1,q))$ are not circulants, and so
$\Gamma=\K_{n,n}-n\K_2$, in which case $n$ must be odd by Lemma \ref{basiccirc-1}, and  case (2.2) holds.
\end{proof}

\section{Vertex quasiprimitive arc-transitive bicirculants}

This section is devoted   to classifying all vertex quasiprimitive arc-transitive bicirculants.
The first lemma  gives a useful observation on quasiprimitive  imprimitive
permutation groups that contain a cyclic subgroup
with  exactly two orbits.

\begin{lemma}\label{bicirc-quasi-1}
Let $G$ be a quasiprimitive imprimitive 
permutation group on a set $\Omega$. Suppose that $H$ is a cyclic subgroup of $G$
that has  exactly two orbits, $H_0$ and $H_1$, on $\Omega$ and  $|\Omega|=2|H|$.  Let  $\mathcal{B}$ be  a non-trivial maximal   $G$-invariant partition of $\Omega$.
Then $G$ is faithful and primitive on $\mathcal{B}$, and also the following hold:

\begin{enumerate}[{\rm (1)}]
\item $H$ is regular on $\mathcal{B}$, and for each   $B\in \mathcal{B}$ we have $|B|=2$ and  $|B\cap H_0|=1=|B\cap H_1|$;
\item $G$ and $|H|$ are  as in   Lemma \ref{primitive-cyclic-1}, and in particular, either $G$ is a $2$-transitive group on $\mathcal{B}$ or
$|H|=p$ for some prime $p$.
\end{enumerate}

\end{lemma}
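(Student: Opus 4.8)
The plan is to first pin down the action of $G$ on $\mathcal{B}$, then use the cyclic structure of $H$ together with faithfulness to force blocks of size $2$, and finally read off the possibilities from Lemma \ref{primitive-cyclic-1}.

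First I would establish faithfulness and primitivity. Let $K$ be the kernel of the $G$-action on $\mathcal{B}$, so $K\lhd G$. Since $\mathcal{B}$ is non-trivial, $K$ cannot be transitive on $\Omega$: any element of $K$ fixes each block setwise, so it can never carry a point into a different block. As $G$ is quasiprimitive, every non-trivial normal subgroup is transitive, which forces $K=1$; thus $G$ is faithful on $\mathcal{B}$. By the definition of a minimal partition, $G^{\mathcal{B}}$ is primitive on $\mathcal{B}$, and faithfulness then says $G$ itself is primitive on $\mathcal{B}$. I would also record two bookkeeping facts: since $G$ is transitive on $\Omega$, all blocks have a common size $d=|\Omega|/|\mathcal{B}|=2|H|/|\mathcal{B}|$; and since $H$ has two orbits of total size $2|H|$ on $\Omega$, it is semiregular with $|H_0|=|H_1|=|H|$, hence regular on each orbit.

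Next I would invoke the dichotomy of Lemma \ref{quo-1} (whose proof uses only that $H$ is transitive on each $H_i$ and that $\mathcal{B}$ is $G$-invariant, so it applies verbatim to the present permutation setting): either every $B\in\mathcal{B}$ lies inside $H_0$ or inside $H_1$, or every $B$ meets both $H_0$ and $H_1$; moreover each nonempty $B\cap H_i$ is a block for $H$ on $H_i$. The main obstacle, and the heart of the argument, is to rule out the first alternative. Here I would exploit that $H$ is \emph{cyclic}. If each block lies inside a single $H_i$, then $H$ preserves the collection of blocks inside $H_0$ and the collection inside $H_1$, so it has (at least) two orbits on $\mathcal{B}$. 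Since $H$ is regular on each $H_i$, the $H$-stabiliser of a block $B$ of size $d$ has order $d$; as $H$ is cyclic this is the \emph{unique} subgroup of order $d$, independent of $i$, and being characteristic in the abelian group $H$ it stabilises every block in each of the two $H$-orbits on $\mathcal{B}$, hence every block. It therefore lies in the kernel of $H$ on $\mathcal{B}$, which is trivial by faithfulness, forcing $d=1$. But $d=1$ is the partition into singletons, contradicting that $\mathcal{B}$ is non-trivial. This eliminates the first alternative.

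In the surviving case every block meets both $H_0$ and $H_1$, and I would prove $(1)$ as follows. The map $B\mapsto B\cap H_0$ is an injection of $\mathcal{B}$ into the set of $H$-blocks partitioning $H_0$ (the intersections are nonempty and pairwise disjoint), and it is $H$-equivariant because $H$ fixes $H_0$ setwise. As $H$ is regular on $H_0$, these equal-sized blocks are the cosets of a single subgroup, so $H$ is transitive on them and hence on $\mathcal{B}$. Being faithful and cyclic, $H$ is then regular on $\mathcal{B}$, so $|\mathcal{B}|=|H|$ and $|B|=2|H|/|H|=2$; since such a $B$ of size $2$ meets both $H_0$ and $H_1$, we get $|B\cap H_0|=1=|B\cap H_1|$. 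Finally, for $(2)$ I would observe that $G$ acts faithfully and primitively on $\mathcal{B}$, which has size $|H|$, and that $H$ is a cyclic regular subgroup of this action. Lemma \ref{primitive-cyclic-1} then lists the possibilities for $G$ and the degree $|\mathcal{B}|=|H|$, and its concluding sentence yields exactly the stated dichotomy: either $G$ is $2$-transitive on $\mathcal{B}$, or the degree $|H|$ is a prime $p$ with $\mathbb{Z}_p\leqslant G\leqslant\AGL(1,p)$.
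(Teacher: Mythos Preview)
Your proof is correct and follows essentially the same route as the paper: both establish faithfulness and primitivity on $\mathcal{B}$ from quasiprimitivity and minimality, invoke the dichotomy of Lemma~\ref{quo-1}, rule out the ``blocks inside a single $H_i$'' case by exploiting that a cyclic group has a unique subgroup of each order (forcing the block stabiliser into the trivial kernel), and then in the remaining case deduce $|B|=2$ before applying Lemma~\ref{primitive-cyclic-1}. The only cosmetic difference is in the second alternative: you pass through the $H$-equivariant bijection $B\mapsto B\cap H_0$ to get $H$ transitive (hence, being abelian and faithful, regular) on $\mathcal{B}$, whereas the paper compares the kernels of $H$ on $\mathcal{B}_0$ and $\mathcal{B}_1$ directly; these are the same idea in slightly different clothing.
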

\begin{proof}  Since $G$ is quasiprimitive on $\Omega$, it follows that $G$ acts faithfully on $\mathcal{B}$. Moreover, the maximality of $\mathcal{B}$
implies that $G$ is primitive on $\mathcal{B}$.
Suppose that for each $B\in \mathcal{B}$,  either $B\subseteq H_0$ or $B\subseteq H_1$.
Then $|B|$ divides $|H|$ and we may
assume that   $H_0=B_1\cup \cdots\cup B_r$ and
$H_1=B_{r+1}\cup \cdots\cup B_{2r}$.
Since  $H_0$ and $H_1$ are the two orbits of $H$, it follows that  $H$ is transitive on both sets $\{B_1,\ldots,B_r\}$ and $\{B_{r+1},\ldots,B_{2r}\}$.
Since $G$ is faithful on $\mathcal{B}$, $H$ is
faithful  on $\mathcal{B}$.
Further, as $H$ is a cyclic group, it is regular on each orbit on $\mathcal{B}$.
Thus $|\mathcal{B}|=2|H|=|\Omega|$,
contradicting the fact that $\mathcal{B}$ is a non-trivial  $G$-invariant partition of $\Omega$. Hence there exists  $B\in \mathcal{B}$ such that
$B\cap H_0\neq \varnothing$ and $B\cap H_1\neq \varnothing$.
By Lemma \ref{quo-1}, for all $B\in \mathcal{B}$, we have
$$B\cap H_0\neq \varnothing \quad \textrm{and} \quad B\cap H_1\neq \varnothing.$$

Let $\mathcal{B}=\{B_1,\ldots,B_t\}$.
For $i\in \{0,1\}$, let $\mathcal{B}_i=\{B_1\cap H_i,\ldots,B_t\cap H_i\}$. Since each $B_j$ meets each $H_i$ non-trivially,  we have that $|\mathcal{B}_0|=|\mathcal{B}_1|=t$.
Moreover, as $H$ is transitive on each $H_i$, it is transitive on each $\mathcal{B}_i$. Since $H$ is cyclic,
it has  a unique subgroup of each order and so the kernel of $H$ on $\mathcal{B}_0$
is equal to the kernel of $H$ on $\mathcal{B}_1$, and so is  in the kernel of $H$ on $\mathcal{B}$.
It follows that $H$ acts faithfully and hence regularly on each $\mathcal{B}_i$. Thus
$|H|=t=|\mathcal{B}_i|$, and so
each $B\in \mathcal{B}$ has size  $2$. Hence (1)  holds. Furthermore, the action of $G$ on $\mathcal{B}$ satisfies the conditions of
Lemma \ref{primitive-cyclic-1}, so
$G$ and $|H|$ are  as in  Lemma \ref{primitive-cyclic-1}, and
hence either $G$ is 2-transitive on $\mathcal{B}$ or $|\mathcal{B}|=p$ is a prime, so that (2) holds.
 \end{proof}

\medskip
The following  proposition determines all the vertex quasiprimitive arc-transitive  bicirculants.

\begin{prop}\label{bicirculant-quasiprimitive-th1}
Let $\Gamma$ be a connected $G$-arc-transitive  bicirculant over  the cyclic subgroup $H$ of order $n$ such
that $G$ is quasiprimitive on $V(\Gamma)$. Then one of the following holds:
\begin{enumerate}[$(1)$]
\item  $G$ is primitive on $V(\Gamma)$ and $\Gamma$ is one of the following graphs:

 \begin{itemize}
\item[(1.1)]  $\K_{2n}$, and  $G$ is a $2$-transitive group  of degree $2n$ as in Theorem \ref{bicirculant-primitive-2};

\item[(1.2)]  Petersen graph or its complement, and $A_5\leqslant G\leqslant S_5$;

\item[(1.3)] $\Ha(2,4)$ or its complement,  and $G$ is a rank $3$ subgroup of $\AGL(4,2)$;
\item[(1.4)]  Clebsch graph or its complement,  and $G$ is a rank $3$ subgroup of $\AGL(4,2)$.

\end{itemize}

\item $G$ is not primitive on $V(\Gamma)$ and $\Gamma$ is one of the following graphs:

 \begin{itemize}
\item[(2.1)]  $\K_{n[2]}$ and  $G$ has rank $3$ on vertices;

\item[(2.2)] $ \K_{n,n}-n\K_2$ with  $\PGL(d,q)\leqslant G\leqslant \PGammaL(d,q)$ and $n=(q^d-1)/(q-1)$.

\end{itemize}
\end{enumerate}

\end{prop}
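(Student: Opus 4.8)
The plan is to treat separately the two cases where $G$ is primitive on $V(\Gamma)$ and where $G$ is quasiprimitive but imprimitive, using throughout that a generator $h$ of $H$ is an element of $G$ with exactly two cycles of length $n$. Suppose first that $G$ is primitive. Then $G$ is a primitive group of degree $2n$ containing the element $h$, so Theorem \ref{bicirculant-primitive-2} lists the possibilities for $G$, and it remains to determine, for each such $G$, the connected $G$-arc-transitive graphs, that is, the self-paired non-trivial orbital graphs. Whenever $G$ is $2$-transitive the only such graph is $\K_{2n}$; this covers the natural actions of $A_{2n}$ and $S_{2n}$, the actions of $\PGL(d,q)\le G\le\PGammaL(d,q)$ on the points of $\PG(d-1,q)$, the groups $M_{22}$, $M_{12}$ and $M_{24}$, and every affine group of degree $16$ in the list whose point stabiliser is transitive on the fifteen non-zero vectors, giving case~(1.1).

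For the remaining (rank at least $3$) groups I would read off the orbital graphs directly. For $A_5\le G\le S_5$ of degree $10$ acting on the $2$-subsets of $\{1,2,3,4,5\}$ the two non-trivial orbitals (``meeting in one point'' and ``disjoint'') give the Petersen graph and its complement (the triangular graph), yielding~(1.2). For the affine groups of degree $16$ I would compute the orbits of the point stabiliser on the fifteen non-zero vectors of $\GF(2)^4$: the stabilisers with two orbits, namely those isomorphic to $\mathbb{Z}_5{:}\mathbb{Z}_4$ or $S_5$ (subdegrees $5,10$) and those isomorphic to $(\mathbb{Z}_3\times\mathbb{Z}_3){:}\mathbb{Z}_4$ or $(S_3\times S_3){:}\mathbb{Z}_2$ (subdegrees $6,9$), produce the Clebsch graph and $\Ha(2,4)$ respectively, together with their complements, giving~(1.3) and~(1.4).

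Now suppose $G$ is quasiprimitive and imprimitive, and let $\mathcal{B}$ be a non-trivial minimal $G$-invariant partition. Since $h$ has exactly two orbits $H_0,H_1$ on $V(\Gamma)$ and $|V(\Gamma)|=2|H|$, Lemma \ref{bicirc-quasi-1} applies: $G$ is faithful and primitive on $\mathcal{B}$, each block has size $2$ and meets $H_0$ and $H_1$ once, the group $H$ is regular on $\mathcal{B}$, and by Lemma \ref{primitive-cyclic-1} either $G$ is $2$-transitive on $\mathcal{B}$ or $|\mathcal{B}|=p$ is prime with $G\le\AGL(1,p)$. In the prime case the translation subgroup $\mathbb{Z}_p\lhd G$ has two orbits on $V(\Gamma)$, contradicting quasiprimitivity, so in fact $G$ is $2$-transitive on $\mathcal{B}$ and $\Gamma_{\mathcal{B}}=\K_n$. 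The argument of Theorem \ref{bic-reduction-th2} shows that $\Gamma$ is an $r$-cover of $\K_n$ with $r=|\Gamma(v)\cap B'|\in\{1,2\}$ for adjacent blocks $B,B'$. If $r=2$ then each vertex is joined to both points of every other block, so $\Gamma=\K_{n[2]}$ with the blocks as parts; the three orbitals (equality, same block, distinct blocks) show that $G$ has rank $3$, giving~(2.1). If $r=1$ then $\Gamma$ is a connected double cover of $\K_n$, which for an arc-transitive $G$ that is $2$-transitive on $\mathcal{B}$ is the bipartite graph $\K_{n,n}-n\K_2$; since the $2$-transitivity on $\mathcal{B}$ makes $G^+$ faithful and $2$-transitive on each bipartite half, Lemma \ref{lem:2trans}(1) together with Lemma \ref{primitive-cyclic-1} identifies the group and yields $\K_{n,n}-n\K_2$ with $\PGL(d,q)\le G\le\PGammaL(d,q)$ and $n=(q^d-1)/(q-1)$, as in~(2.2).

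The step I expect to be the main obstacle is the primitive case: for each entry of Theorem \ref{bicirculant-primitive-2}, and especially for the affine groups of degree $16$ listed there, one must decide precisely which are $2$-transitive and which have rank $3$, match the two rank-$3$ graphs with $\Ha(2,4)$ and the Clebsch graph, and verify that the relevant orbitals are self-paired so that the graphs are genuinely arc-transitive; computing the degree-$16$ subdegrees (if necessary with the aid of \textsc{Magma}) is the crux. A secondary difficulty is the imprimitive $2$-transitive case, where one must determine the cover index $r$ and check that $r=1$ indeed forces the bipartite double-cover structure before Lemma \ref{lem:2trans} can be applied.
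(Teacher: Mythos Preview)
Your treatment of the primitive case follows the paper's approach and is essentially correct. The genuine gaps are all in the imprimitive case, and they are more serious than you flag in your closing paragraph.

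First, you cannot invoke Theorem~\ref{bic-reduction-th2} to conclude that $\Gamma$ is an $r$-cover of $\Gamma_{\mathcal{B}}$. That argument uses that the blocks are orbits of a normal subgroup $N$ (transitivity of $N$ on each block is what forces every vertex of $B_1$ to have the same number of neighbours in $B_2$). Here $G$ is quasiprimitive, so there is no nontrivial intransitive normal subgroup, and $\mathcal{B}$ is merely a system of imprimitivity. Consequently there is a third subcase you omit: $[B_1\cup B_2]$ may consist of a \emph{single} edge, so that one vertex of $B_1$ has a neighbour in $B_2$ while the other does not. The paper eliminates this by a separate argument: a counting of the neighbours of $b_1$ and $b_1'$ shows $G$ cannot be $3$-transitive on $\mathcal{B}$, which forces $\PGL(d,q)\le G\le\PGammaL(d,q)$ with $d\ge 3$, and then the orbit sizes of $G_{B_1,B_2}$ on the remaining points of $\PG(d-1,q)$ give a contradiction.

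Second, even in the genuine cover case $[B_1\cup B_2]\cong 2\K_2$, your assertion that an arc-transitive double cover of $\K_n$ with $G$ $2$-transitive on the fibres must be $\K_{n,n}-n\K_2$ is false: there are non-bipartite antipodal $2$-covers of $\K_n$ (for instance those in case~(3)(e) of \cite{GLP}), so Lemma~\ref{lem:2trans} cannot be applied until bipartiteness is established. The paper instead shows $\Gamma$ is $G$-distance-transitive and antipodal, invokes the classification in \cite{GLP}, and then uses the list from Lemma~\ref{primitive-cyclic-1} to eliminate $M_{11}$, $A_n$, $S_n$ (none has a quasiprimitive action of degree $2n$ with blocks of size two) and the $\PSL(2,p)$ covers (no element of order $n=p+1$), leaving only $\PGL(d,q)\le G\le\PGammaL(d,q)$. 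Your appeal to Lemma~\ref{lem:2trans}(1) carries out none of this elimination.
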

\begin{proof}   Suppose first that $G$ is primitive on $V(\Gamma)$.
Then $G$ is given in Theorem \ref{bicirculant-primitive-2}. If $G$ is 2-transitive on $V(\Gamma)$, then $\Gamma$ is isomorphic to a complete graph
$\K_{2n}$, so that (1.1) holds.
Now assume  that  $G$ is not 2-transitive  on $V(\Gamma)$.  Then $n$, $G$  and the vertex stabiliser $G_u$ are one of the following:

 \begin{itemize}
\item[(i)]  $n=8$, $G\leqslant \AGL(4,2)$ and $G_u\in \{\mathbb{Z}_5:\mathbb{Z}_4,(\mathbb{Z}_3\times \mathbb{Z}_3):\mathbb{Z}_4, (S_3\times S_3):\mathbb{Z}_2,S_5\}$.

\item[(ii)] $n=5$, and $A_5\leqslant G\leqslant S_5$ in the action on the set of  2-subsets of $\{1,2,3,4,5\}$.

\end{itemize}

If $\Gamma$ is in case (ii), then by \cite[p.75]{DM-1}, $\Gamma$ is the Petersen graph or its complement, and (1.2) holds.
Now we determine the graphs in case (i) by \textsc{Magma} \cite{Magma-1997}. In all cases $G$ has rank three. If $G_u= (S_3\times S_3):\mathbb{Z}_2$ or $(\mathbb{Z}_3\times \mathbb{Z}_3):\mathbb{Z}_4$, then  $\Gamma=\Ha(2,4)$ or its complement, and
(1.3) holds.
If  $G_u= \mathbb{Z}_5:\mathbb{Z}_4$ or $ S_5$, then $\Gamma$ is  the Clebsch graph or its complement (the folded $5$-cube), so  (1.4) holds.


It remains to consider the case that   $G$ is not primitive on $V(\Gamma)$.
Let  $\mathcal{B}$ be a non-trivial maximal  block system of $G$ on $V(\Gamma)$.
Then by  Lemma \ref{bicirc-quasi-1},    for each  $B\in \mathcal{B}$, we have that
$|B\cap H_0|=1=|B\cap H_1|$  and $|B|=2$. Moreover, $H$ is regular on $\mathcal{B}$,  $G$ is faithful and primitive on $\mathcal{B}$,
and the pair $(G,n)$ is as in Lemma \ref{primitive-cyclic-1}.

If the pair $(G,n)$ is as in case (i) of Lemma \ref{primitive-cyclic-1}, then $|V(\Gamma)|=2p$ for some prime $p$ and $H$ is a normal subgroup of $G$ of order $p$.
However, this contradicts $G$ being  quasiprimitive on a set of size $2p$.
Moreover,  $(G,n)\notin \{(\PSL(2,11),11),(M_{23},23)\}$, as in these cases $G$ does not have a transitive action on $2n$ points.

It remains to assume that   $(G,n)$ is either $(M_{11},11)$ or   is as in   cases (ii)--(iii) of Lemma \ref{primitive-cyclic-1}.
Thus $G$ is $2$-transitive  on $\mathcal{B}$ and $G$ has a quasiprimitive action on $2n$ points.
Hence  the quotient graph $\Gamma_\mathcal{B}$ is a complete graph on  $n$ vertices for some  $n\geqslant 4$.
Let $B_1,B_2\in \mathcal{B}$. Then $B_1$ and $B_2$ are adjacent in $\Gamma_{\mathcal{B}}$.
Let $b_1\in B_1$ and recall that   $|B_1|=|B_2|=2$.

Suppose first that $|\Gamma(b_1)\cap B_2|=2$.
Then as  $\Gamma$ is  $G$-arc-transitive, $|\Gamma(b_2)\cap B_1|=2$ for each $b_2\in B_2$, and so
$$[B_1\cup B_2]\cong \K_{2,2}.$$
Since $\Gamma_\mathcal{B}\cong  \K_n$, it follows that   $\Gamma\cong  \K_{n[2]}$. In particular, for each vertex $u$ of $\Gamma$, there is a unique vertex at distance two from $u$. Since $\Gamma$ is  $G$-arc-transitive, it follows that $G$ has rank three,
so that (2.1) holds.

Next assume that   $|\Gamma(b_1)\cap B_2|=1$.
First suppose that  $[B_1\cup B_2]$ contains exactly one edge.
Then  the valency of $\Gamma_\mathcal{B}$ is twice  the valency  of $\Gamma$. Since $\Gamma_\mathcal{B}\cong \K_n$,  the valency of $\Gamma_\mathcal{B}$ is $n-1$ and so $n$ is an odd integer. Let $B_1=\{b_1,b_1'\}$ and suppose that $b_1$ is adjacent to $b_2\in B_2$. Then $b_1'$ is not adjacent to any vertex of $B_2$. Furthermore, $b_1$ is adjacent to a unique vertex of $\frac{n-1}{2}$  neighbours in $\Gamma_{\mathcal{B}}$ of $B_1$, say $\Theta_1$, and $b_1'$ is adjacent to a unique vertex of the remaining $\frac{n-1}{2}$  neighbours in $\Gamma_{\mathcal{B}}$ of $B_1$, say $\Theta_2$.  Since  $G_{B_1,B_2}$ fixes $b_1$ and $b_2$,  it fixes   $\Theta_1$ and  $\Theta_2$ setwise.
Hence, as $n\geqslant 4$, it follows that  $G$ is not 3-transitive on $V(\Gamma_\mathcal{B})$.
Thus  Lemma \ref{primitive-cyclic-1} implies that   $\PGL(d,q)\leqslant G\leqslant \PGammaL(d,q)$ and $n=(q^d-1)/(q-1)$ with $d\geqslant 3$ (note that $\PGL(2,q)$
is 3-transitive on $q+1$ vertices).  Moreover, the elements of $V(\Gamma_{\mathcal{B}})$ can be identified with the set of 1-dimensional subspaces of a $d$-dimensional vector space over $\GF(q)$. Then  we see that $G_{B_1,B_2}$ has two orbits on the set of remaining 1-dimensional subspaces: those in the span $U$ of $B_1$ and $B_2$, and those outside $U$. Since $B_2\in\Theta_2$, it follows that $\Theta_1$ is one of these two orbits, but neither has size $\frac{n-1}{2}$,  a contradiction.

Thus  $[B_1\cup B_2]\cong 2\K_2$, and $\Gamma$ is a  cover of $\Gamma_\mathcal{B}\cong \K_n$.
It follows that for any vertex $v\in V(\Gamma)\setminus B_1$, $v$
is adjacent to either $b_1$ or $b_1'$, so $v\in \Gamma(b_1) \cup \Gamma(b_1')$. Thus
$$V(\Gamma)=\{b_1\}\cup \Gamma(b_1) \cup \Gamma(b_1') \cup \{b_1'\}.$$
Since $\Gamma$ is a cover of $\Gamma_{\mathcal{B}}$, we have that $\Gamma(b_1)\neq \Gamma(b_1')$ and since $\Gamma$ is $G$-arc-transitive, the vertex stabiliser $G_{b_1}=G_{b_1'}$ is transitive on $\Gamma(b_1)$ and $\Gamma(b_1')$. Thus $G_{b_1}$ has 4 orbits on $V(\Gamma)$.  Since $\Gamma$ is a  cover of $\Gamma_\mathcal{B}$, we have $b_1'\notin \Gamma_2(b_1)$.
Thus $\Gamma$  has diameter 3, $\Gamma_2(b_1)=\Gamma(b_1')$ and $\Gamma_3(b_1)=\{b_1'\}$. Hence $\Gamma$
is $G$-distance-transitive and is an antipodal cover of $\K_{n}$ with fibres of size 2.
Therefore $\Gamma$ and $G$ are  listed in cases (1), (3) or (6) of   \cite[Main Theorem]{GLP}.
Recall that either $(G,n)=(M_{11},11)$, or  $n$ and $G$ as an abstract group are given in
  cases (ii)--(iii) of Lemma \ref{primitive-cyclic-1}. Thus, either case  (1) or case (3)(e)
of   \cite[Main Theorem]{GLP} holds. If case  (1)
 occurs then $\Gamma\cong \K_{n,n}-n\K_2$. However, $M_{11}$ does not have a rank four action of degree 22 while $S_n$ and $A_n$ do not have a quasiprimitive action of degree $2n$ where the blocks have size two.
Thus $(G,n)$ is as in case (iii) of Lemma \ref{primitive-cyclic-1}, that is, $\PGL(d,q)\leqslant G\leqslant \PGammaL(d,q)$,  and $n=(q^d-1)/(q-1)$, so that (2.2) holds.
Finally, if case  (3)(e)
 occurs, then
$\Aut(\Gamma)\cong \PSigmaL(2,p)\times S_2$ where  $p=n-1\equiv 1\pmod{4}$.
Since $G$  is quasiprimitive  on $V(\Gamma)$ and is transitive on the set of arcs of $\K_{n}$, it follows that $G$ is isomorphic to a subgroup of $ \PSigmaL(2,p)$. However, here $G$ does not contain an element of order $n=p+1$.
This completes the proof. \end{proof}

We remark that  quasiprimitive rank 3 groups have been classified in  \cite{DGLPP}, and  the possibilities for the group $G$
in case (2.1) of Proposition \ref{bicirculant-quasiprimitive-th1} can be determined by using
\cite[Table 1]{DGLPP}.

\section{Vertex bi-quasiprimitive arc-transitive bicirculants}

In this section, we  will complete the proof
of  Theorem \ref{bicirculant-reduction-th2} by  determining the arc-transitive  vertex bi-quasiprimitive bicirculants.

Let $\Gamma$ be a $G$-arc-transitive   graph and suppose that  $G$ acts  bi-quasiprimitively  on  $V(\Gamma)$. Then $G$ has a minimal normal subgroup $M$ that has exactly two orbits on $V(\Gamma)$.
Since  $\Gamma$ is $G$-arc-transitive and connected,
each $M$-orbit contains no edge of $\Gamma$.
Thus $\Gamma$ is a bipartite graph, and  the two $M$ orbits form the two bipartite halves of $\Gamma$.
In particular,   all intransitive  normal subgroups of $G$ have the same orbits. Recall that $G^+$ denotes the index two subgroup of $G$ that is the stabiliser of each bipartite half.

The following proposition  classifies arc-transitive bicirculants that are vertex bi-quasiprimitive and such that the two orbits of the cyclic subgroup are the two bipartite halves.

\begin{prop}\label{bicirc-biqp-lem2}
Let $\Gamma$ be a connected $G$-arc-transitive graph  such that  $G$ is bi-quasiprimitive on $V(\Gamma)$ and $G$ contains a cyclic subgroup $H$ of order $n$ such that the two bipartite halves are $H$-orbits.
Then  $\Gamma$ is one of the following graphs:
\begin{enumerate}[{\rm (a)}]
\item $\K_{n,n}$ where $n\geqslant 2$;
\item $ \K_{n,n}-n\K_2$ where $n\geqslant 3$;
\item  $B'(H(11))$;
\item $G(2p,r)$ where  $p$ is a prime and  $r>1$ divides $p-1$;
\item $B(\PG(d-1,q))$ and $B'(\PG(d-1,q))$, where $d\geq 3$, $q$ is a prime power.
\end{enumerate}

\end{prop}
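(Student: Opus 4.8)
The plan is to follow the strategy used for circulants in Proposition \ref{circ-biqp-th1}, adapting it to the present situation where the cyclic group $H$ lies inside $G^+$. Since both bipartite halves $\Delta_0,\Delta_1$ are orbits of $H$ and $|V(\Gamma)|=2n$, each half has size $n=|H|$, so $H\leqslant G^+$ and $H$ is regular on each $\Delta_i$; in particular $\Gamma$ is a bicirculant over $H$. If $G^+$ is not faithful on each half, then Lemma \ref{lem:Knn} immediately gives $\Gamma\cong\K_{n,n}$, which is case (a). So for the remainder I would assume that $G^+$ is faithful on each $\Delta_i$.

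The first substantive step is to show that $G^+$ is quasiprimitive on each half. Suppose not, and let $N$ be a maximal intransitive normal subgroup of $G^+$ on $\Delta_0$, with $\sigma\in G\setminus G^+$ swapping the halves. Then $N^\sigma\lhd G^+$ is maximal intransitive on $\Delta_1$, and letting $\mathcal{B}_0,\mathcal{B}_1$ denote the sets of $N$-orbits and $N^\sigma$-orbits on $\Delta_0,\Delta_1$ respectively, the union $\mathcal{B}=\mathcal{B}_0\cup\mathcal{B}_1$ is a $G$-invariant partition with $|\mathcal{B}_0|=|\mathcal{B}_1|\geqslant 2$. The kernel of $G$ on $\mathcal{B}$ is a normal subgroup with at least four orbits, hence trivial by bi-quasiprimitivity, so $H$ acts faithfully on $\mathcal{B}$. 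Since $H$ is cyclic and transitive on each $\mathcal{B}_i$ with $|\mathcal{B}_0|=|\mathcal{B}_1|$, the kernels of $H$ on $\mathcal{B}_0$ and on $\mathcal{B}_1$ have equal order and so coincide (a cyclic group has a unique subgroup of each order), forcing $H$ to be regular on each $\mathcal{B}_i$ and hence $|\mathcal{B}_0|=n=|\Delta_0|$, contradicting the fact that the faithfulness of $G^+$ makes every $N$-orbit have size greater than one. This argument is essentially the one used in Lemma \ref{bicirc-quasi-1} and Proposition \ref{circ-biqp-th1}, and I expect it to be the main obstacle, since it requires combining bi-quasiprimitivity (to kill the kernel) with the cyclic structure of $H$ (to force regularity on the blocks).

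Once $G^+$ is quasiprimitive on $\Delta_0$, the regular cyclic subgroup $H$ together with \cite[Theorem 3.1]{SLZ-2014} shows $G^+$ is in fact primitive on $\Delta_0$, so the pair $(G^+,n)$ appears in Lemma \ref{primitive-cyclic-1}. I would then split on whether $n$ is prime. If $n=p$ is a prime, then $\Gamma$ is a connected arc-transitive bipartite graph on $2p$ vertices, and Theorem \ref{bicirc-2p-1} leaves exactly the bipartite possibilities $\K_{p,p}$, $G(2p,r)$, $B(\PG(d-1,q))$, $B'(\PG(d-1,q))$ and $B'(H(11))$ (the graphs $\K_{2p}$, $G(2,p,r)$, the Petersen graph and its complement all being non-bipartite), which are cases (a), (d), (e) and (c). If $n$ is not prime, then cases (ii)--(iv) of Lemma \ref{primitive-cyclic-1} force $G^+$ to be $2$-transitive on each half (the $2$-transitivity transferring from $\Delta_0$ to $\Delta_1$ via conjugation by $\sigma$).

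In the non-prime case, since $H$ is a cyclic subgroup transitive on each bipartite half and $G^+$ is faithful and $2$-transitive on each half, Lemma \ref{lem:2trans}(1) applies directly and yields $\K_{n,n}-n\K_2$ with $n\geqslant 3$ (case (b)) or $B(\PG(d-1,q))$, $B'(\PG(d-1,q))$ (case (e)). Collecting the outcomes of the faithful and non-faithful branches gives precisely the list (a)--(e), completing the proof. Apart from the quasiprimitivity step, the remaining work is just the careful bookkeeping needed to read off the bipartite graphs from Theorem \ref{bicirc-2p-1} and to verify that the hypotheses of Lemma \ref{lem:2trans}(1) are met.
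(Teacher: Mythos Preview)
Your proposal is correct and follows essentially the same route as the paper. The only minor difference is that the paper shows directly that $G^+$ is primitive on each half by taking a minimal $G^+$-invariant partition (rather than the orbits of a maximal intransitive normal subgroup), thereby avoiding the detour through quasiprimitivity and the appeal to \cite[Theorem~3.1]{SLZ-2014}.
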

\begin{proof}   Identify the two bipartite halves of $\Gamma$ with $H$ and denote them by $H_0$ and $H_1$.
Let $G^+=G_{H_0}=G_{H_1}$.
Since $G$ is transitive on the vertex set, it follows that  $G=\langle G^+, \sigma \rangle$	for some element $\sigma\in G\setminus G^+$ with $\sigma^2\in G^+$.
If   $G^+$ acts unfaithfully on each $H_i$,
then it follows from Lemma \ref{lem:Knn} that $\Gamma=\K_{n,n}$ for some  $n\geqslant 2$.

From now on we suppose that  $G^+$ acts faithfully on each $H_i$.
Assume that $G^+$ acts  imprimitively on $H_0$. Take a maximal $G^+$-invariant partition $\mathcal{B}_0$ on $H_0$. Then $\mathcal{B}_0^\sigma$ is a maximal $G^+$-invariant partition  on $H_1$.
Let $\mathcal{C}=\mathcal{B}_0 \cup \mathcal{B}_0^\sigma$. Then $G$ leaves invariant this partition
$\mathcal{C}$ of the vertex set, as $\mathcal{B}_0^{\sigma^2}=\mathcal{B}_0$. Since  $G$ is bi-quasiprimitive on $V(\Gamma)$ and
$|\mathcal{C}|\geqslant 4$, it follows that $G$  acts faithfully on $\mathcal{C}$.
Let $M_0$ be the kernel of $H$ acting on $\mathcal{B}_0$ and $M_1$ be the kernel of $H$ acting on $\mathcal{B}_0^\sigma$.
Since $H$ acts transitively on $\mathcal{B}_0$ and  $\mathcal{B}_0^\sigma$, it follows that  $|M_0|= |H|/|\mathcal{B}_0|=|H|/|\mathcal{B}_0^{\sigma}|=|M_1|$. Then as $H$ is a cyclic group and has a unique subgroup of each order, it follows that
$M_0=M_1$, that is, the kernel of $H$ on $\mathcal{B}_0$ is the same as the kernel on $\mathcal{B}_0^\sigma$, and is hence in the kernel of $G$ on $\mathcal{C}$. However, $G$ is faithful on $\mathcal{C}$, and so $H$ acts faithfully on $\mathcal{B}_0$. Thus $|\mathcal{B}_0|=|H|$, contradicting $G^+$ being imprimitive on $H_0$.

Therefore  $G^+$ is primitive on each $H_i$.
Since $G^+$ contains the cyclic subgroup $H$ that is transitive on each $G^+$-orbit, it follows from Lemma \ref{primitive-cyclic-1} that
either $|H|=p$ is a prime or $G^+$ is $2$-transitive on $H_i$.
If  $|H|=p$, then as $\Gamma$ is a bipartite graph  it follows from Theorem \ref{bicirc-2p-1} that
$\Gamma$ is one of the following graphs: $\K_{p,p}$, $G(2p,r)$ with $r>1$, $B'(H(11))$ where $p=11$, $B(\PG(d-1,q))$ and $B'(\PG(d-1,q))$ where $p=\frac{q^d-1}{q-1}$,  $d\geqslant 3$ and $q$ is a prime power.
Suppose that $G^+$ is $2$-transitive on $H_i$
with $|H_i|$ not a prime.  Then Lemma \ref{lem:2trans} (1) implies that
$\Gamma=\K_{n,n}-n\K_2$ where $n\geqslant 3$, $B(\PG(d-1,q))$, or $B'(\PG(d-1,q))$.
\end{proof}

It remains to  consider the case  where the two $H$-orbits are  not the two bipartite halves.

\begin{prop}\label{bicir-biqp-2}
Let $\Gamma$ be a $G$-arc-transitive graph such that $G$ is bi-quasiprimitive on $V(\Gamma)$ and $G$ contains a cyclic subgroup  $H$  of order $n$, such that $H$ has two orbits of size $n$ and these are not the bipartite halves of $\Gamma$.
Then $n$ is even and  $\Gamma$ is one of $\K_{n,n}$, $\K_{n,n}-n\K_2$, $B(\PG(d-1,q)$ or $B'(\PG(d-1,q))$ for even $d\geqslant 3$ and odd $q$.

\end{prop}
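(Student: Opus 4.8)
The plan is to exploit the interplay between the cyclic group $H$ (with two orbits $H_0,H_1$ that are \emph{not} the bipartite halves) and the bipartite structure forced by bi-quasiprimitivity. Since $G$ is bi-quasiprimitive, $G$ has a minimal normal subgroup $M$ with exactly two orbits, and these are the bipartite halves $\Delta_0,\Delta_1$ of $\Gamma$. By hypothesis $\{H_0,H_1\}\neq\{\Delta_0,\Delta_1\}$, so by Lemma~\ref{quo-1}(1) applied to the $G$-invariant partition $\{\Delta_0,\Delta_1\}$, each $\Delta_j$ meets both $H_0$ and $H_1$ nontrivially. First I would record the combinatorial consequence of this crossing: $H^+:=H\cap G^+$ is the subgroup preserving each bipartite half, and the two $H$-orbits cut each $\Delta_j$ into an orbit of $H^+$. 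Since $H$ is cyclic, an element $\sigma\in H\setminus H^+$ swaps $\Delta_0$ and $\Delta_1$, forcing $|H:H^+|=2$ and hence $n$ even; this is the first assertion of the proposition.

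**Reducing to a $2$-transitive or prime-degree situation on the halves.**
Next I would analyse the action of $G^+$ on each half. Following the pattern of Proposition~\ref{bicirc-biqp-lem2}, the dichotomy is whether $G^+$ acts faithfully on each $\Delta_j$. If $G^+$ is unfaithful, Lemma~\ref{lem:Knn} immediately yields $\Gamma\cong\K_{n,n}$. So assume $G^+$ is faithful on each half. The key structural point is that $H^+$ is a cyclic subgroup of $G^+$ that is \emph{semiregular with two orbits} on $\Delta_j$ (not transitive, precisely because $H_0,H_1$ cross the halves): indeed $|H^+|=n/2$ while $|\Delta_j|=n$, and $H^+$ preserves the partition $\{\Delta_j\cap H_0,\Delta_j\cap H_1\}$, acting transitively on each part by Lemma~\ref{quo-1}(2). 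Thus $H^+$ is a cyclic subgroup of $G^+$ with two equal-sized orbits on $\Delta_j$. I would then run the same imprimitivity argument as in Proposition~\ref{bicirc-biqp-lem2} (lifting a minimal $G^+$-block system on $\Delta_0$ together with its $\sigma$-image to a $G$-invariant partition of $V(\Gamma)$ with at least four classes, contradicting bi-quasiprimitivity via faithfulness and the uniqueness of cyclic subgroups of each order) to conclude that $G^+$ is \emph{primitive} on each half.

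**Applying M\"uller's theorem and identifying the graphs.**
Now $G^+$ is primitive on $\Delta_j$ and contains the cyclic subgroup $H^+$ with two equal-sized orbits, so Theorem~\ref{bicirculant-primitive-2} applies to the action of $G^+$ on $\Delta_j$ of degree $n=2m$ where $m=n/2$. This splits into the affine and almost simple cases of that theorem. In the almost simple case one invokes Lemma~\ref{lem:2trans}(2) directly (after checking $G^+$ is $2$-transitive, which holds in the relevant rows of Theorem~\ref{bicirculant-primitive-2}) to obtain $\K_{n,n}-n\K_2$, or $B(\PG(d-1,q))$ or $B'(\PG(d-1,q))$ with $d$ even and $q$ odd, exactly as stated; the stray $\K_{12,12}$ possibility from Lemma~\ref{lem:2trans}(2) collapses into $\K_{n,n}$. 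The affine cases of Theorem~\ref{bicirculant-primitive-2} (degrees $n\in\{4,8,16\}$, i.e.\ $m\in\{2,4,8\}$) must be handled separately: here $G^+\le\AGL(k,2)$, and I would argue that these small configurations either fail to admit the required extension by $\sigma$ with the crossing property, or produce a graph already on the list ($\K_{n,n}$ or $\K_{n,n}-n\K_2$), checking the finitely many cases by hand or with \textsc{Magma}~\cite{Magma-1997}.

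**Main obstacle.**
The delicate step is controlling the affine cases of Theorem~\ref{bicirculant-primitive-2}, since there $G^+$ need not be $2$-transitive and Lemma~\ref{lem:2trans} does not apply; one must verify directly that no \emph{new} bipartite arc-transitive graph arises from these small primitive affine groups once the half-swapping element $\sigma$ and the crossing condition $\{H_0,H_1\}\neq\{\Delta_0,\Delta_1\}$ are imposed. I expect this bookkeeping, rather than any single conceptual difficulty, to be where the real work lies.
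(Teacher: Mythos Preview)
Your overall architecture matches the paper's: establish $n$ even, dispose of the unfaithful case via Lemma~\ref{lem:Knn}, and then analyse $G^+$ on the halves using M\"uller's theorem and Lemma~\ref{lem:2trans}(2). However, there is a genuine gap in your reduction to primitivity of $G^+$ on the halves.

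You assert that the imprimitivity argument from Proposition~\ref{bicirc-biqp-lem2} carries over verbatim. It does not. In Proposition~\ref{bicirc-biqp-lem2} the cyclic group $H$ is \emph{transitive} on each $\Delta_i$, and this is exactly what drives the contradiction: the kernels of $H$ on $\mathcal{B}_0$ and on $\mathcal{B}_0^\sigma$ have the same order, hence coincide by cyclicity, hence lie in the kernel of $G$ on $\mathcal{C}$, which is trivial, forcing $|\mathcal{B}_0|=|H|=|\Delta_0|$. In the present proposition $H$ is not transitive on $\Delta_0$. Applying Lemma~\ref{quo-1}(1) to the lifted partition $\mathcal{B}=\mathcal{B}_0\cup\mathcal{B}_1$ and the group $H$, one of the two alternatives is that every block of $\mathcal{B}$ meets both $H_0$ and $H_1$. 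In that alternative $H$ is transitive and hence regular on $\mathcal{B}$, so $|\mathcal{B}|=|H|=n$ while $|V(\Gamma)|=2n$: every block has size $2$, and \emph{no contradiction arises}. The paper spends roughly half the proof on precisely this situation: it shows $\Gamma_{\mathcal{B}}$ is a circulant, studies the kernels $K_i$ of $G^+$ on $\mathcal{B}_i$ (either trivial or transitive on $\mathcal{B}_{1-i}$), and in each subcase uses Lemma~\ref{primitive-cyclic-1} on the primitive action of $G^+$ on $\mathcal{B}_i$ together with structural constraints on $G$ inside $\Aut(\PSL(d,q))$ or $\AGL$ to obtain contradictions or to land in $\K_{4,4}$. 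This analysis cannot be bypassed by the argument you sketch.

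A smaller oversight: in the almost simple branch of Theorem~\ref{bicirculant-primitive-2}, row (2)(b) with $A_5\leqslant G^+\leqslant S_5$ acting on the ten $2$-subsets is \emph{not} $2$-transitive, so Lemma~\ref{lem:2trans}(2) does not cover it. The paper handles this case separately, identifying the candidate graphs as the standard double covers of the Petersen graph and its complement and then ruling them out via a centraliser argument in $S_5\times\mathbb{Z}_2$ that contradicts bi-quasiprimitivity.
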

\begin{proof}
Let $\Delta_0$ and $\Delta_1$ be the two bipartite halves of $\Gamma$, and let $H_0$ and $H_1$ be the two $H$-orbits. Let $G^+$ be the index two subgroup of $G$ that stabilises both $\Delta_0$ and $\Delta_1$, and let $\sigma\in G$ such that $G=\langle G^+,\sigma\rangle$ and  $\sigma^2\in G^+$. If $|V(\Gamma)|\leqslant 4$, then the only candidate for $\Gamma$ is
$ \K_{2,2}$, so from now on we suppose that $|V(\Gamma)|>4$. Then $\{\Delta_0,\Delta_1\}$ is the unique $G$-invariant partition of  $V(\Gamma)$ into two equal sized parts.  Since the $\Delta_i$ are not $H$-orbits and $H$ has two orbits of size $n$, it follows that $H^+:=H\cap G^+$ has index two in $H$ and has two equal sized orbits on each $\Delta_i$. Thus $n$ is even and $G=\langle G^+,H\rangle$.

If $G^+$ is unfaithful on each $\Delta_i$, then it follows from Lemma \ref{lem:Knn} that $\Gamma=\K_{n,n}$.  Thus in the remainder we assume that $G^+$ is faithful on each $\Delta_i$.

Suppose first that $G^+$ is primitive on each $\Delta_i$. Since $H^+$ is a cyclic subgroup with exactly two orbits of size $n/2$ on $\Delta_i$, the possibilities for $G^+$ are given by Theorem \ref{bicirculant-primitive-2}. In particular, either  $G^+$ is almost simple or $\mathbb{Z}_2^m \lhd G^+\leqslant \AGL(m,2)$ with $2\leqslant m\leqslant 4$.  If $\mathbb{Z}_2^m \lhd G^+\leqslant \AGL(m,2)$ then $G^+$ has a normal subgroup $N\cong \mathbb{Z}_2^m$ that is regular on each $\Delta_i$.  Now $N$ is characteristic in $G^+$ and so is normal in $G$.  Moreover, since $H$ is cyclic, either $H\cap N=1$ or $H\cap N=\mathbb{Z}_2$.  Further, since $N$ is self-centralising in $G^+$ and $H$ is cyclic, either $|H|=4$ and $C_H(N)=H$, or $C_H(N)= H^+\cap N$.  If $m\geqslant 3$, then $|H|=2^m\geqslant 8$ and  $H^+\cap N\neq H^+$,  so $C_H(N)=H^+\cap N$. Thus $H/(H^+\cap N)$ is isomorphic to a  cyclic  subgroup of $\GL(m,2)$ of order $2^m$ or $2^{m-1}$. Since $m=3$ or $4$, it follows that $m=3$, $G^+=\AGL(3,2)$ and $H^+\cap N=2$. Since $|H|=8$ and $G^+$ does not have an element of order 8, it follows that $G=\Aut(\AGL(3,2))$. In particular, given $u\in \Delta_0$ we have that $G_u^+$ is transitive on $\Delta_1$ and so $\Gamma=\K_{8,8}$.  It remains to consider the case where $m=2$ and $G^+=A_4\cong \AGL(1,4)$ or $S_4\cong \AGL(2,2)$. If $G^+=S_4$ then $G=S_4\times \mathbb{Z}_2$, contradicting $G$ being bi-quasiprimitive. Thus $G^+=A_4$ and either $G=A_4\times \mathbb{Z}_2$ or $G\cong S_4$. The first group does not contain an element of order 4 (and is also not bi-quasiprimitive), while the second implies that $\Gamma=\K_{4,4}-4\K_2$.

Next suppose that $G^+$ is almost simple. Then either $G^+$ is $2$-transitive on each $\Delta_i$, or $n=10$ and $A_5\leqslant G^+\leqslant S_5$. In the first case, Lemma \ref{lem:2trans} (2) implies that $\Gamma=\K_{n,n}-n\K_2$
where $n\geqslant 3$,  $\K_{12,12}$, $B(\PG(d-1,q))$ or $B'(\PG(d-1,q))$ where  $d\geqslant 3$ is an even integer and $q$ is odd.
Suppose instead that $n=10$ and $A_5\leqslant G^+\leqslant S_5$. Then by \cite[p.75]{DM-1}, the action of $G^+$ on $\Delta_i$ is $S_5$ or $A_5$ acting naturally on the set of unordered pairs of $\{1,2,3,4,5\}$.
Let $u\in \Delta_0$.
Then  $G_{u}^+$ has only orbits of size 1, 3 or 6 on both $\Delta_0$ and $\Delta_1$.
Moreover, there exists $u'\in \Delta_1$ such that
$G_{u}^+=G_{u'}^+=G_{uu'}^+$.
Since $G_u^+$ is transitive on $\Gamma(u)$,
it follows that $|\Gamma(u)|=3$ or 6. Thus $\Gamma$ is either the standard double cover of the Petersen graph, or the standard double cover of the complement of the Petersen graph. In both cases $\Aut(\Gamma)=S_5\times \mathbb{Z}_2$. If $G\leqslant \Aut(\Gamma)$ contains an element $g$ of order 10 with two cycles of length 10 on $V(\Gamma)$, it follows that $g^5$ is an involution that centralises the element $g^2\in S_5$ of order five. As $S_5$ contains no such involution, we have  $\langle g^5\rangle =Z(\Aut(\Gamma)) \leqslant G$, contradicting the fact that all normal subgroups of $G$ have at most two orbits.

It remains to consider the case where $G^+$ is imprimitive on $\Delta_i$. Let $\mathcal{B}_0$ be a maximal $G^+$-invariant partition of $\Delta_0$.
Then $\mathcal{B}_1=\mathcal{B}_0^\sigma$ is a maximal $G^+$-invariant partition of $\Delta_1$.  Note that  $\mathcal{B}=\mathcal{B}_0\cup \mathcal{B}_1$
is a  $G$-invariant partition of $V(\Gamma)$ with at least four parts. Thus the kernel of $G$ acting on $\mathcal{B}$ has at least four orbits and so the bi-quasiprimitivity of $G$ implies that $G$ acts faithfully on $\mathcal{B}$.

Suppose that each  $B\in \mathcal{B}$ is  contained in either $H_0$ or $H_1$. Then for $i\in \{0,1\}$, let $\mathcal{C}_i=\{B\in \mathcal{B}\mid B\subseteq H_i\}$ and let $M_i$ be the kernel of $H$ acting on $\mathcal{C}_i$. Then $H/M_i$ is transitive and so regular on $\mathcal{C}_i$. Since $|\mathcal{C}_0|=|\mathcal{C}_1|$, we have $|M_0|=|M_1|$,
and since $H$ is cyclic, it follows that $M_0=M_1$. Thus $M_0$ lies in the kernel of $G$ acting on $\mathcal{B}$. Since $G$ is faithful on $\mathcal{B}$ it follows that $M_0=1$. Hence $|\mathcal{B}|=2|H|=|\Omega|$, contradicting the fact that $\mathcal{B}$ is a non-trivial block system of $G$ on $V(\Gamma)$. Thus Lemma \ref{quo-1} (1) implies that
each $B\in \mathcal{B}$  has non-empty intersection with both $H_0$ and $H_1$.  Hence $H$ is transitive and faithful on $\mathcal{B}$. Since $H$ is cyclic it follows that $H$ is regular on $\mathcal{B}$, so $\Gamma_{\mathcal{B}}$ is a circulant.
As $|H|=|V(\Gamma)|/2$, it follows that  each $B\in\mathcal{B}$ has size two. Moreover, $H^+$ acts transitively on each $\mathcal{B}_i$.

For $i=0,1,$ let $K_i$ be the kernel of $G^+$ acting on $\mathcal{B}_i$. Then $K_0^\sigma=K_1$, and   $K_0\cap K_1$ is contained in the kernel of $G$ acting on $\mathcal{B}$. Thus  $K_0\cap K_1=1$ and so $K_0\times K_1\lhd G$. Since $G$ is bi-quasiprimitive, it follows that $K_i=1$ or $K_i$ acts transitively on $\mathcal{B}_{1-i}$.

Suppose that $K_i$ acts transitively on $\mathcal{B}_{1-i}$. Then $\Gamma_\mathcal{B}\cong \K_{\frac{n}{2},\frac{n}{2}}$ is complete bipartite.
Since  $K_0\times K_1\unlhd G^+$, it follows that  $(K_0\times K_1)/K_0\unlhd G^+/K_0\cong (G^+)^{\mathcal{B}_0}$.
Since each block in $\mathcal{B}_i$ has size 2, it follows that $K_i$ contains only elements of order 2. Thus $K_i$ is abelian and $K_i\cong \mathbb{Z}_{2}^{r}$. Based on this together with the fact that  $K_i$ acts transitively on $\mathcal{B}_{1-i}$, we conclude that  $K_i$ is regular  on $\mathcal{B}_{1-i}$, and so $\frac{n}{2}=|\mathcal{B}_{1-i}|=|K_i|=2^r$.
Furthermore,   $ \mathbb{Z}_2^{r}\cong (K_0\times K_1)/K_0  \unlhd  (G^+)^{\mathcal{B}_0}$.
Since  $\mathcal{B}_0$ is a maximal $G^+$-invariant partition of $\Delta_0$, $(G^+)^{\mathcal{B}_0}$ acts primitively on $\mathcal{B}_0$.
In particular,  $(G^+)^{\mathcal{B}_0}$ acts primitively on $\mathcal{B}_0$ of affine type.
Recall that the cyclic group $(H^+)^{\mathcal{B}_0}\leqslant (G^+)^{\mathcal{B}_0}$ acts transitively on  $\mathcal{B}_0$.
By Lemma \ref{primitive-cyclic-1}, $\frac{n}{2}=p$ is a prime, so  $\frac{n}{2}=p=2$. Hence $n=4$ and $\Gamma_\mathcal{B}\cong \K_{2,2}$.
Let $\mathcal{B}_0=\{B_1,B_2\}$ and $\mathcal{B}_1=\{D_1,D_2\}$.
If  $[B_i\cup D_j]$ contains exactly one edge of $\Gamma$, then $\Gamma$ has valency 1, and so $\Gamma$ is disconnected, a contradiction.
Suppose that $[B_1\cup D_1]\cong 2\K_2$. Then $\Gamma$ is a  cover of $\Gamma_\mathcal{B}$. Hence $\Gamma$ has valency 2 and $\Gamma$ is a cycle. Since
each block in $\mathcal{B}$ has size 2, it follows that $\Gamma\cong C_8$.
However, in this case, $\Aut(\Gamma)$ has a unique order 4 cyclic subgroup whose 2 orbits are the 2 bipartite halves of $\Gamma$, a contradiction.
If $[B_1\cup D_1]\cong \K_{2,2}$, then $\Gamma\cong \K_{4,4}$.

Thus it remains to consider the case that $G^+$ acts faithfully on each $\mathcal{B}_i$. The maximality of $\mathcal{B}_i$ implies that $G^+$  is primitive on $\mathcal{B}_i$.
If $|\mathcal{B}_i|=p$ and $G^+\leqslant \AGL(1,p)$, then $G^+$ has a unique minimal normal subgroup $N$ of order $p$. Since $N$ is characteristic in $G^+$, it is normal in $G$. However, $|V(\Gamma)|=4p$ and so $N$ has four orbits, contradicting $G$ being bi-quasiprimitive. Since $H^+$ is a cyclic transitive subgroup of $G^+$ in its action on $\mathcal{B}_i$, it follows that $G^+$ is given by Lemma  \ref{primitive-cyclic-1} and in particular is 2-transitive on $\mathcal{B}_i$. Moreover, if $B\in\mathcal{B}_0$ then $(G^+)_B$ has an index two subgroup (the stabiliser of a vertex in $B$) and so either $G^+=S_{n/2},M_{11}$ or $\PGL(d,q)\lhd G^+\leqslant \PGammaL(d,q)$.  If $G^+=M_{11}$ or $S_{n/2}$ with $n\neq 12$, then  $G=G^+ \times \mathbb{Z}_2$, contradicting $G$ being bi-quasiprimitive.
If  $G^+=S_{6}$, then $G=\Aut(S_6)$. However, in this case, all the order 6 elements of $G$ are in $G^+=S_6$, again a contradiction.
Thus $\PGL(d,q)\lhd G^+\leqslant \PGammaL(d,q)$. Then either $|C_G(\PSL(d,q))|=2$ or $G\leqslant \Aut(\PSL(d,q))$. The first case is not possible as this would imply that $G$ has a normal subgroup of order two, which would contradict $G$ being bi-quasiprimitive. Thus $G\leqslant \Aut(\PSL(d,q))$. If $G\not\leqslant\PGammaL(d,q)$ then $\Gamma_\mathcal{B}$ is either   $B(\PG(d-1,q))$ or $B'(\PG(d-1,q))$. However, these graphs are not circulants (Lemma \ref{basiccirc-1}) and so $G\leqslant \PGammaL(d,q)$. In this case  $|\mathcal{B}|=2(q^d-1)/(q-1)$, but $\PGammaL(d,q)$ does not contain an element of this order, contradicting $H$ being regular on $\mathcal{B}$.
 \end{proof}

\section{Prime and small valency arc-transitive bicirculants}

In this section, we compare our Theorem \ref{bicirculant-reduction-th2} to the classifications of arc-transitive bicirculants of valency 3 and 5, and also make some observations about
the prime valent case in general.

\medskip

We  first give a corollary of Theorem \ref{bicirculant-reduction-th2} about prime valency arc-transitive bicirculants.

\begin{corol}\label{th-cor-1}
Let $\Gamma$ be a  $G$-arc-transitive  bicirculant of prime valency $p\geq 3$. Then
 $\Gamma$ is a normal cover of
one of the following graphs.
 \begin{enumerate}[{\rm (a)}]
\item    $\K_{p,p}$;
\item   $\K_{p+1}$,  $\K_{p+1,p+1}-(p+1)\K_2$;
\item   $G(2q,p)$, where $q$ is a   prime integer   and  $p$ divides    $q-1$;
\item   $B(\PG(n,q))$,  where $q$ is a prime power,  $n\geqslant 2$, and $p=\frac{q^n-1}{q-1}$;
\item Petersen graph ($p=3$),  Clebsch graph ($p=5$).
 \end{enumerate}
Moreover, examples exist in all cases.
\end{corol}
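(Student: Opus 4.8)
The plan is to read the result straight off Theorem~\ref{bicirculant-reduction-th2} by keeping track of valencies. That theorem tells us $\Gamma$ is a normal $r$-cover of some graph $\Sigma$ in its list (a)--(g), and from the definition of an $r$-cover a vertex of $\Gamma$ lying in a block $B$ has exactly $r$ neighbours in each block adjacent to $B$ and none elsewhere (the blocks induce empty subgraphs by Theorem~\ref{bic-reduction-th2}); hence the valency of $\Gamma$ equals $r$ times the valency of $\Sigma$. Since every graph in (a)--(g) is connected, arc-transitive and on at least three vertices, each has valency at least $2$ (a connected graph of valency $1$ has only two vertices). As $\mathrm{val}(\Gamma)=p$ is prime, the equation $p=r\cdot\mathrm{val}(\Sigma)$ forces $r=1$ and $\mathrm{val}(\Sigma)=p$. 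So $\Gamma$ is in fact a normal cover of $\Sigma$, and the whole problem reduces to deciding, family by family, when the valency of a graph in (a)--(g) equals the odd prime $p$.

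First I would tabulate the valencies. The graph $\K_{n,n}$ has valency $n$, giving $\K_{p,p}$ (part (a)); $\K_n$ and $\K_{n,n}-n\K_2$ have valency $n-1$, giving $\K_{p+1}$ and $\K_{p+1,p+1}-(p+1)\K_2$ (part (b)); and $\K_{n[2]}$ has the even valency $2(n-1)$, so it is excluded. The graph $G(2q,e)$ has valency $|L(q,e)|=e$, so setting $e=p$ yields $G(2q,p)$ with $q$ prime and $p\mid q-1$ (part (c)), while $\Cay(q,e)$ has the even valency $e$ and so is excluded. A direct count gives $B(\PG(n,q))$ valency $(q^n-1)/(q-1)$ and its bipartite complement $B'(\PG(n,q))$ valency $q^n$; since $n\geq 2$ makes $q^n$ not prime, $B'(\PG(n,q))$ never has prime valency, whereas $B(\PG(n,q))$ survives under the constraint $p=(q^n-1)/(q-1)$ (part (d)). Finally $B'(H(11))$ has valency $6$, and among the sporadic graphs of (g) the Petersen graph and the Clebsch graph have the prime valencies $3$ and $5$, while their complements together with $\Ha(2,4)$ and its complement have the composite valencies $6$, $10$, $6$ and $9$; this leaves only the Petersen graph ($p=3$) and the Clebsch graph ($p=5$), giving part (e). Assembling the surviving cases reproduces exactly the list (a)--(e) of the corollary.

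The argument is little more than bookkeeping once Theorem~\ref{bicirculant-reduction-th2} is in hand, so I do not expect a genuine obstacle. The one step deserving care is the valency computation for the two projective families: one checks that a fixed $1$-space lies in $(q^n-1)/(q-1)$ of the $(q^{n+1}-1)/(q-1)$ hyperplanes of $\GF(q)^{n+1}$, and hence is non-incident to the remaining $q^n$ of them. The observation that $q^n$ is never prime for $n\geq 2$ is precisely what eliminates $B'(\PG(n,q))$ and leaves only $B(\PG(n,q))$ in part (d).
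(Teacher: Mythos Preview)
Your proposal is correct and follows essentially the same route as the paper's own proof: apply Theorem~\ref{bicirculant-reduction-th2}, use primality of the valency to force $r=1$, and then run through the list (a)--(g) eliminating graphs whose valency is not an odd prime. Your justification that $r=1$ via the identity $p=r\cdot\mathrm{val}(\Sigma)$ with $\mathrm{val}(\Sigma)\geqslant 2$ is a slightly more explicit version of the paper's one-line appeal to ``$r$ divides the prime $p$'' (which tacitly uses, from the proof of Theorem~\ref{bic-reduction-th2}, that $r$ is a proper divisor of the valency); the subsequent valency bookkeeping matches the paper's case analysis exactly.
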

\begin{proof}
Let $\Gamma$ be a  $G$-arc-transitive  bicirculant of prime valency $p\geqslant 3$. Then by Theorem \ref{bicirculant-reduction-th2}, $G$ has a normal subgroup $N$ such that  $\Gamma$
is a normal $r$-cover of $\Gamma_N$ where $\Gamma_N$ is
one of the    graphs in Theorem \ref{bicirculant-reduction-th2}. Moreover, since $r$ divides the prime number $p$, it follows that $r=1$, and so
$\Gamma$ is a normal cover of $\Gamma_N$. Thus $\Gamma_N$ has valency $p$.

Note  that, for the    graphs in Theorem \ref{bicirculant-reduction-th2},   $\K_{n[2]}$ has valency $2(n-1)$ which is not a prime; $\Cay(q,e)$ ($q$  a prime) has even valency $e$;
$B'(\PG(n,q))$ has valency $q^n$; $\Ha(2,4)$, $B'(H(11))$ and  the complement of the Petersen graph have valency 6;  and the complement of the Clebsch graph has valency 10.
Thus
$\Gamma_N$ is as claimed. Moreover, since all  graphs listed in (a)-(e) are themselves arc-transitive prime valency bicirculants by Lemma \ref{basiccirc-3}, examples trivially exist for all these listed graphs.
\end{proof}

For the  class of arc-transitive bicirculants  of valency 3 or 5  that are Cayley graphs of dihedral groups we obtain the following lemma by using  \cite{AHK-2015} and \cite{MP-2000}.

\begin{lemma}\label{dihval35-1}
Let $n\geqslant 11$ and $k= 3$ or $5$. Let $\Gamma_{n,k}=\Cay(D_{2n},\{b,ba,ba^{r+1},\ldots,ba^{r^{k-2}+\cdots+r+1}\})$, where   $D_{2n}=\langle a,b|a^n=b^2= (ba)^2=1 \rangle$, and  $r \in \mathbb{Z}_n^*$ such that  $r^{k-1}+\cdots+r^2+r+1 \equiv 0 \pmod n$.
Then   $\Gamma_{n,k}$ is a bipartite  arc-transitive graph  and  the following hold.

 \begin{enumerate}[{\rm (a)}]
\item  $\langle a^p  \rangle\cong \mathbb{Z}_{n/p}$ is a normal subgroup of $\Aut(\Gamma_{n,k})$, where $p$ is a prime divisor of $n$.
\item   There exists  a prime divisor $p$ of $n$ such that $k \mid (p-1)$.
\item  Let  $p$ be  a prime divisor of $n$ such that $k\mid (p-1)$ and let $N=\langle a^p  \rangle$. Then $(\Gamma_{n,k})_N\cong G(2p,k)$.
 \end{enumerate}

\end{lemma}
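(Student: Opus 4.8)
The plan is to establish the four assertions in turn: first the elementary bipartiteness and arc-transitivity, then the number-theoretic claim (b), then the normality claim (a) via the cited classifications, and finally the quotient computation in (c).

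First I would record the structure. Writing $s_0=0$ and $s_i=1+r+\cdots+r^{i-1}$ for $1\le i\le k$, the connection set is $S=\{ba^{s_i}:0\le i\le k-1\}$, which consists entirely of reflections of $D_{2n}$. Hence every edge joins $\langle a\rangle$ to $b\langle a\rangle$, so $\Gamma_{n,k}$ is bipartite with the two $\langle a\rangle$-orbits (each of size $n$) as its halves; in particular it is a bicirculant over $\langle a\rangle\cong\mathbb{Z}_n$. For arc-transitivity I would produce one extra automorphism: let $\phi$ be the map $a\mapsto a^r$, $b\mapsto ba$. Since $\gcd(r,n)=1$ and the defining relations are preserved (the only nontrivial check being $\phi(b)\phi(a)\phi(b)^{-1}=(ba)a^r(ba)^{-1}=ba^rb^{-1}=a^{-r}=\phi(a)^{-1}$), we have $\phi\in\Aut(D_{2n})$. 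Using the recursion $s_{i+1}=1+rs_i$ one computes $\phi(ba^{s_i})=(ba)a^{rs_i}=ba^{s_{i+1}}$, and the hypothesis $s_k\equiv 0\pmod n$ gives $\phi(ba^{s_{k-1}})=ba^{s_k}=b=ba^{s_0}$. Thus $\phi$ permutes $S$ as a single $k$-cycle, so the induced graph automorphism fixes the identity vertex and acts transitively on its neighbourhood $S$; with the regular action of $D_{2n}$ this yields arc-transitivity.

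For (b) I would first deduce $r^k\equiv 1\pmod n$ from $(r-1)s_k=r^k-1$ and $s_k\equiv 0\pmod n$. As $k$ is prime, the order of $r$ in $\mathbb{Z}_n^{*}$ is $1$ or $k$; if it were $1$ then $s_k\equiv k\pmod n$, forcing $n\mid k$, impossible for $n\ge 11$. Hence $r$ has order $k$ and $k\mid\phi(n)$. If no prime divisor $p$ of $n$ satisfied $k\mid p-1$, then $k\mid\phi(n)=\prod_{p\mid n}p^{e_p-1}(p-1)$ would force $k=p$ for some $p$ with $p^2\mid n$. I would rule this out by a congruence modulo $p^2$: writing $r\equiv 1+pt\pmod{p^2}$ (as $r^p\equiv 1\pmod{p^2}$), one gets $r^i\equiv 1+ipt\pmod{p^2}$, whence $s_k=\sum_{i=0}^{p-1}r^i\equiv\sum_{i=0}^{p-1}(1+ipt)=p+pt\cdot\tfrac{p(p-1)}{2}\equiv p\pmod{p^2}$, using that $p$ is odd. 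This contradicts $s_k\equiv 0\pmod{p^2}$, proving (b). For (a), which is the step that genuinely relies on external input, I would invoke the classifications of arc-transitive dihedrants of valency $3$ and $5$ in \cite{MP-2000} and \cite{AHK-2015}, which determine $\Aut(\Gamma_{n,k})$ for $n\ge 11$; in each case one reads off that $\Aut(\Gamma_{n,k})$ normalises the rotation subgroup $\langle a\rangle$. Granting this, $\langle a\rangle$ is the unique cyclic subgroup of order $n$ in $D_{2n}$ and so is characteristic there, while $\langle a^p\rangle$ is the unique subgroup of order $n/p$ of the cyclic group $\langle a\rangle$, hence characteristic in $\langle a\rangle$ and so in $D_{2n}$; therefore $\langle a^p\rangle\cong\mathbb{Z}_{n/p}$ is normal in $\Aut(\Gamma_{n,k})$. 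I expect this to be the main obstacle: the full force of the cited classifications is needed precisely to pin $\Aut(\Gamma_{n,k})$ down tightly enough to guarantee normality, the small cases where extra automorphisms destroy it being exactly those excluded by $n\ge 11$.

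Finally for (c), set $N=\langle a^p\rangle$, which is normal in $\Aut(\Gamma_{n,k})$ by (a) and, lying in $\langle a\rangle$, is also normal in $D_{2n}$. Its orbits are the $2p$ cosets of $N$, so $\Gamma_N=\Cay(D_{2n}/N,\overline{S})$ with $D_{2n}/N\cong D_{2p}=\langle\overline a,\overline b\rangle$ and $\overline{S}=\{\overline b\,\overline a^{\,\overline{s_i}}\}$, where $\overline{s_i}=s_i\bmod p$. Using $k\mid p-1$ I would show $r\not\equiv 1\pmod p$ (otherwise $s_k\equiv k\pmod p$ forces $p=k$, contradicting $k\mid p-1$), whence $r$ has order exactly $k$ modulo $p$ and $\langle r\rangle=L(p,k)$. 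Consequently the exponents $\overline{s_i}=(r^i-1)/(r-1)$ are $k$ distinct residues equal to $\tfrac{1}{r-1}(L(p,k)-1)$, an affine image of $L(p,k)$. Since a Cayley graph of $D_{2p}$ whose connection set consists of reflections is determined up to isomorphism by the affine-equivalence class of the exponent set (via the automorphisms $\overline a\mapsto\overline a^{\lambda}$, $\overline b\mapsto\overline b\,\overline a^{\mu}$, which send the exponent set $T$ to $\mu+\lambda T$), it follows that $\Gamma_N\cong\Cay(D_{2p},\{\overline b\,\overline a^{\ell}:\ell\in L(p,k)\})\cong G(2p,k)$, as required; the distinctness of the $\overline{s_i}$ also confirms that $\Gamma_{n,k}$ is a genuine ($1$-)cover of $G(2p,k)$.
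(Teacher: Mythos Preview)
Your proof is correct and follows the same overall architecture as the paper's, but several of the steps are carried out more explicitly and self-containedly. The paper obtains arc-transitivity by citation to \cite{MP-2000,AHK-2015}, whereas you exhibit the automorphism $\phi$ directly; for (b) the paper disposes of the possibility $k^2\mid n$ by ``considering the possibilities for $r$ modulo $k^2$'' (a finite check for $k=3,5$), while your argument via $r\equiv 1+pt\pmod{p^2}$ and the binomial expansion is cleaner and uniform in $k$; and for (c) the paper simply observes that the quotient is a bipartite arc-transitive graph of valency $k$ on $2p$ vertices and invokes \cite[Lemma~3.9]{CO-1987} to identify it, whereas you compute the quotient Cayley graph of $D_{2p}$ directly and match it to $G(2p,k)$ via an affine change of the exponent set. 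Your route in (c) is genuinely more elementary, avoiding the Cheng--Oxley classification, at the cost of a short explicit calculation.

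One small wording issue in your (a): the detour through ``characteristic in $D_{2n}$'' is unnecessary and, as written, would require $D_{2n}\trianglelefteq\Aut(\Gamma_{n,k})$ to conclude. The argument you actually need (and have already set up) is simply that $\langle a\rangle\trianglelefteq\Aut(\Gamma_{n,k})$ by the cited classifications, and $\langle a^p\rangle$ is characteristic in $\langle a\rangle$, hence normal in $\Aut(\Gamma_{n,k})$.
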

\begin{proof}
By the definition of $\Gamma_{n,k}$, we  see that $\Gamma_{n,k}$ is a bipartite graph, and the two bipartite halves of $\Gamma_{n,k}$  are  $\Delta_0=\{1,a,a^2,\ldots, a^{n-1}\}$ and $\Delta_1=\{b,ba,ba^2,\ldots, ba^{n-1}\}$
which are both $\langle a\rangle$-orbits.

If $k=3$, then by \cite[p.978--979]{MP-2000}, $\Gamma_{n,3}$ is arc-transitive  with   $\Aut(\Gamma_{n,3})=\mathbb{Z}_n:\mathbb{Z}_6$ where $\mathbb{Z}_n=\langle a  \rangle$;
if $k=5$, then by Lemma 3.7 and the proof of Theorem 3.11 of \cite{AHK-2015}, $\Gamma_{n,5}$ is arc-transitive and
$\langle a\rangle\cong \mathbb{Z}_n$ is a normal subgroup of   $\Aut(\Gamma_{n,5})$.
Thus in both cases $\langle a\rangle\cong \mathbb{Z}_n$ is a normal subgroup of   $\Aut(\Gamma_{n,k})$.
Since for each   prime divisor $p$ of $n$,  $\langle a^p  \rangle$ is a characteristic subgroup of $\langle a  \rangle$,
it follows that   $\langle a^p  \rangle \cong \mathbb{Z}_{n/p}$ is a normal subgroup of $\Aut(\Gamma_{n,k})$.

Let $n=p_1^{e_1}p_2^{e_2}\cdots p_f^{e_f}$ where $p_i$ is a prime. Since $k<n$, we know $r\neq 1$.
Since $r^{k-1}+\cdots+r^2+r+1 \equiv 0 \pmod n$, it follows that  $(r-1)(r^{k-1}+\cdots+r^2+r+1) \equiv 0 \pmod n$, so
$r^k -1 \equiv 0 \pmod n$, that is, $r^k \equiv 1 \pmod n$.
Let $(n,r)=t$. Since $r^k=ne+1$ for some integer $e$, it follows that $t$ divides $1$, so
 $t=1$.
Hence  $r$ belongs to the group of units of the ring $\mathbb{Z}_n$, which has order $\phi(n)$ (Euler totient function).
Therefore, since $k$ is a prime, the order of $r$ in the group of units is $k$, and so  $k$  divides $\phi(n)=p_1^{e_1-1}p_2^{e_2-1}\cdots p_f^{e_f-1} \Pi (p_i-1)$.
Suppose that $k$ (which is a prime) divides $p_1^{e_1-1}p_2^{e_2-1}\cdots p_f^{e_f-1}$. Then  $k^2$ divides $n$. As  $r^{k-1}+\cdots +r^2+r+1 \equiv 0 \pmod n$, we have $r^{k-1}+\cdots +r^2+r+1 \equiv 0 \pmod {k^2}$.
However, considering the possibilities for $r$ module $k^2$ and evaluating $r^{k-1}+\cdots+r^2+r+1$, we see that we never have $r^{k-1}+\cdots+r^2+r+1\equiv 0\pmod {k^2}$,
a contradiction.
Thus  $k^2$ does not divide $n$, and so $k$ divides $ \Pi (p_i-1)$. Since $k$ is a prime number,  there exists a prime divisor $p_i$ of $n$ such that $k$ divides $p_i-1$.

Let  $p$ be a prime divisor of $n$ such that $k\mid (p-1)$ and let $N=\langle a^p  \rangle$. Then $N$ has $p$ orbits on each bipartite half.
Thus $(\Gamma_{n,k})_N$ is a bipartite arc-transitive graph of  valency $k$ and with $2p$ vertices, and also its automorphism group has two blocks of size $p$.
By \cite[Lemma 3.9]{CO-1987}, $(\Gamma_{n,k})_N\cong G(2p,k)$.
\end{proof}

\subsection{Arc-transitive bicirculants of valency 3}

The \emph{generalised Petersen graph} $GP(n,r)$  is the graph on the vertex set
$$\{u_0,u_1,\ldots,u_{n-1},v_0,v_1,\ldots,v_{n-1}\}$$
with the adjacencies:
$$u_i\sim u_{i+1}, \, v_i \sim v_{i+r}, \, u_i\sim v_i, \quad\quad i=0,1,\ldots,n-1.$$
Hence each  generalised Petersen graph $GP(n,r)$ has valency 3.
By \cite{FGW-1971},
$GP(n,r)$ is arc-transitive if and only if   $(n,r)=(4,1)$, $(5, 2) $, $(8, 3)$, $ (10, 2),$ $ (10, 3), (12, 5) $  and $  (24, 5)$.

\medskip

The following classification of  cubic arc-transitive bicirculants follows from  \cite{FGW-1971,MP-2000,P-2007}.

\begin{theo}\label{atbic-val3-1}
Let $\Gamma$ be a finite connected arc-transitive  bicirculant of valency $3$. Then   $\Gamma$ is one of the following graphs:

 \begin{enumerate}[{\rm (a)}]
\item    $\K_4$, $\K_{3,3}$;
\item   $GP(4,1)$ ($\K_{4,4}-4\K_2$), $GP(5,2)$ (Petersen graph), $GP(8,3)$ (M\"obius-Kantor graph), $GP(10,2)$, $GP(10,3)$ (Desargues graph), $GP(12,5)$ (Nauru graph), $GP(24,5)$;
\item    Heawood graph ($B(\PG(2,2))$);
\item    $\Cay(D_{2n},\{b,ba,ba^{r+1}\})$, where $D_{2n}=\langle a,b|a^n=b^2=(ba)^2=1 \rangle$,  $n\geqslant 11$ is odd and $r\in \mathbb{Z}_{n}^*$ such that $r^2+r+1 \equiv 0 \pmod n$.
 \end{enumerate}

\end{theo}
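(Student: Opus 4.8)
The plan is to use the prime-valency reduction of Corollary \ref{th-cor-1} as a skeleton and then fill in, for each resulting basic quotient, all cubic arc-transitive bicirculants that cover it, drawing on the structural classification of cubic bicirculants in \cite{P-2007}, the classification of arc-transitive generalised Petersen graphs in \cite{FGW-1971}, and the classification of arc-transitive cubic Cayley graphs of dihedral groups in \cite{MP-2000}. Let $\Gamma$ be a connected arc-transitive cubic bicirculant. By Corollary \ref{th-cor-1} with $p=3$, $\Gamma$ is a normal cover of one of $\K_{3,3}$, $\K_4$, $\K_{4,4}-4\K_2$, $G(2q,3)$ with $q$ prime and $3\mid q-1$, $B(\PG(n,q))$ with $(q^n-1)/(q-1)=3$, or the Petersen graph. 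The first step is the elementary observation that $(q^n-1)/(q-1)=3$ with $n\geqslant 2$ forces $n=2$ and $q=2$, so the only survivor of family (d) of the Corollary is $B(\PG(2,2))$, the Heawood graph; this reduces the set of possible basic quotients to the explicit short list above.

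Next I would record which basic quotients are themselves cubic arc-transitive bicirculants, as these occur in the conclusion with $N=1$ (so $r=1$): these are $\K_4$ and $\K_{3,3}$ in item (a), the Petersen graph $=GP(5,2)$ and $\K_{4,4}-4\K_2=GP(4,1)$ in item (b), and the Heawood graph in item (c). The remaining task is to determine the proper normal covers.

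The heart of the argument is a case division according to the connection data of $\Gamma$ regarded as a bi-Cayley graph over its cyclic group $\langle g\rangle$ of order $n$: the three edges at a vertex split as $|R|+|S|=3$, where $R=-R$ counts the edges internal to a $g$-cycle and $S$ counts the edges crossing between the two cycles. If $|S|=1$ then $R=\{a,-a\}$ and $\Gamma$ is a generalised Petersen graph $GP(n,r)$ (two induced $n$-cycles joined by a perfect matching); if $|S|=3$ then $R=\varnothing$ and $\Gamma$ is bipartite with all edges crossing, which is exactly $\Cay(D_{2n},\{ba^{t_1},ba^{t_2},ba^{t_3}\})$ for a set of three reflections; the case $|S|=2$ forces $n$ even and an internal perfect matching on each side. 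For the generalised Petersen graphs I would invoke \cite{FGW-1971}, which gives precisely the seven graphs $GP(4,1)$, $GP(5,2)$, $GP(8,3)$, $GP(10,2)$, $GP(10,3)$, $GP(12,5)$, $GP(24,5)$ of item (b). For the bipartite (dihedral Cayley) case I would invoke \cite{MP-2000} together with Lemma \ref{dihval35-1}: the connected arc-transitive cubic ones are the $\Cay(D_{2n},\{b,ba,ba^{r+1}\})$ with $r^2+r+1\equiv 0\pmod n$, and by Lemma \ref{dihval35-1}(c) each is a normal cover of $G(2p,3)$ for a prime divisor $p$ of $n$ with $3\mid p-1$. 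Since $r^2+r+1$ is always odd, a parity argument forces $n$ odd, and the small cases $n=3$ and $n=7$ reproduce $\K_{3,3}$ and the Heawood graph; imposing $n\geqslant 11$ then avoids duplication and yields item (d).

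The main obstacle is completeness, that is, showing that these two families together with the finitely many small quotients exhaust everything. This rests on the structural classification of \cite{P-2007}, and I expect the genuine work to be twofold: first, verifying that the mixed case $|S|=2$ produces no arc-transitive graph other than $\K_4$ (here an edge-transitivity argument must rule out an automorphism carrying an internal matching edge to a crossing edge when $n>2$); and second, matching each arc-transitive generalised Petersen graph and each dihedral Cayley graph to its basic normal quotient from Corollary \ref{th-cor-1}, which requires locating the appropriate intransitive normal subgroup inside the full automorphism group in each case. Once these are in hand, the lists from \cite{FGW-1971}, \cite{MP-2000} and the reduction assemble into exactly items (a)--(d).
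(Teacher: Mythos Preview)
The paper does not actually prove this theorem: it states it as a known classification that ``follows from \cite{FGW-1971,MP-2000,P-2007}'' and gives no further argument. Your proposal is therefore not so much an alternative to the paper's proof as an attempt to unpack what that citation means.

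The core of your sketch---splitting a cubic bicirculant by the number $|S|$ of ``spoke'' edges, identifying $|S|=1$ with generalised Petersen graphs (handled by \cite{FGW-1971}), $|S|=3$ with dihedral Cayley graphs (handled by \cite{MP-2000}), and arguing that $|S|=2$ yields only $\K_4$---is exactly the content of the three cited references, with \cite{P-2007} supplying the structural trichotomy. That part is sound, and it is all that is required.

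What is superfluous is the Corollary~\ref{th-cor-1} framing. That corollary tells you which basic graphs $\Gamma$ must cover, but it gives no mechanism for enumerating the covers themselves; you immediately fall back on the structural classification \cite{P-2007} to do the real work, so the reduction step contributes nothing to the proof of Theorem~\ref{atbic-val3-1}. In fact you have slightly conflated two distinct things: the classification Theorem~\ref{atbic-val3-1} (an input, taken from the literature) and the verification in Proposition~\ref{bicirculant-val3} that each graph on the list does arise as a normal cover of something in Theorem~\ref{bicirculant-reduction-th2} (an output of this paper, proved afterwards using Lemmas~\ref{dihval35-1} and~\ref{gpet-1}). Your ``matching each graph to its basic normal quotient'' belongs to the latter, not to a proof of the former. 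Stripped of that scaffolding, your argument is the intended one.
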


Define the following permutations of $V(GP(n,r))$.
\begin{align*}
\rho: & \  u_i\mapsto u_{i+1},v_i\mapsto v_{i+1};\\
\delta: & \  u_i\mapsto u_{-i},v_i\mapsto v_{-i};\\
\sigma: & \  u_{4i}\mapsto u_{4i},u_{4i+2}\mapsto v_{4i-1}, u_{4i+1}\mapsto u_{4i-1},u_{4i-1}\mapsto v_{4i},\\
         & \ v_{4i}\mapsto u_{4i+1},v_{4i-1}\mapsto v_{4i+5}, v_{4i+1}\mapsto u_{4i-2},v_{4i+2}\mapsto v_{4i-6}.
\end{align*}

The following lemma essentially follows from \cite{FGW-1971}.
\begin{lemma}\label{gpet-1}
Let $\Gamma=GP(n,r)$ where   $(n,r)=(4,1),$ $(8, 3), $ $ (12, 5) $ or $  (24, 5)$.
Let $A:=\Aut(\Gamma)$. Then $\Gamma$ is bipartite,  $2$-arc-transitive  and the following hold.

 \begin{enumerate}[{\rm (a)}]
\item    $A=\langle \rho, \sigma, \delta \rangle$ and $A_{u_0}=\langle \sigma, \delta \rangle$.
\item   $\langle \rho^4  \rangle\cong \mathbb{Z}_{n/4}$ is a normal subgroup of $\Aut(\Gamma)$.
\item  If $N=\langle \rho^4  \rangle$, then $\Gamma_N\cong \K_{4,4}-4\K_2$.
 \end{enumerate}

\end{lemma}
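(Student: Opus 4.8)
The plan is to handle the four graphs uniformly, using \cite{FGW-1971} for the global shape of the automorphism group and carrying out the normality and quotient computations by hand. First I would record that in each case $n$ is even and $r$ is odd, and exhibit the $2$-colouring assigning $u_i$ the colour $i\bmod 2$ and $v_i$ the colour $(i+1)\bmod 2$; all three edge types ($u_i\sim u_{i+1}$, $u_i\sim v_i$, and $v_i\sim v_{i+r}$ with $r$ odd) then join vertices of opposite colour, so $\Gamma$ is bipartite. Next I would note that $\rho$ and $\delta$ are automorphisms (immediate from the three adjacency rules) and that $\sigma$ is an automorphism (a routine but $r$-dependent check on each edge type, or simply cite \cite{FGW-1971}). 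Evaluating the generators on $\Gamma(u_0)=\{u_1,u_{n-1},v_0\}$ gives $\delta=(u_1\ u_{n-1})$ fixing $v_0$ and $\sigma=(u_1\ u_{n-1}\ v_0)$, so $\langle\sigma,\delta\rangle$ induces $\Sym(3)$ on the three neighbours of $u_0$; together with vertex-transitivity of $\langle\rho,\sigma,\delta\rangle$ (note $\sigma$ sends $u_{n-1}$ to $v_0$, linking the two $\langle\rho\rangle$-orbits) this yields $2$-arc-transitivity. Part (a) then follows from \cite{FGW-1971}, which determines $\Aut(\Gamma)$ and the point stabiliser: the neighbour computation identifies the stabiliser concretely as $\langle\sigma,\delta\rangle\cong\Sym(3)$ of order $6$, and hence $\Aut(\Gamma)_{u_0}=\langle\sigma,\delta\rangle$ and $\Aut(\Gamma)=\langle\rho,\sigma,\delta\rangle$.

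For part (b), since $\rho$ has order $n$ and $4\mid n$, the element $\rho^4$ has order $n/4$, so $\langle\rho^4\rangle\cong\mathbb{Z}_{n/4}$. To see this subgroup is normal in $A=\langle\rho,\sigma,\delta\rangle$ I would conjugate $\rho^4$ by each generator: conjugation by $\rho$ is trivial; from the one-line computation $\delta\rho\delta^{-1}=\rho^{-1}$ we get $\delta\rho^4\delta^{-1}=\rho^{-4}\in\langle\rho^4\rangle$; and I would verify $\sigma\rho^4\sigma^{-1}=\rho^4$ by checking that $\sigma$ is $\rho^4$-equivariant on each of the eight residue classes $U_j=\{u_i:i\equiv j\ (\mathrm{mod}\ 4)\}$ and $V_j=\{v_i:i\equiv j\ (\mathrm{mod}\ 4)\}$, on each of which $\rho^4$ acts as a single cyclic shift. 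Tracking the block images ($U_0$ fixed, $V_2$ fixed, and the two $3$-cycles $(U_1\ U_3\ V_0)$ and $(U_2\ V_3\ V_1)$) shows the equivariance holds class by class, so $\sigma$ commutes with $\rho^4$. Hence $\langle\rho^4\rangle\lhd A$.

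For part (c), set $N=\langle\rho^4\rangle$; its orbits are precisely the eight classes $U_0,\dots,U_3,V_0,\dots,V_3$, and since $u_0$ has exactly one neighbour in each of $U_1,U_3,V_0$, vertex-transitivity shows each vertex has exactly one neighbour in each adjacent class, so $\Gamma$ is a cover of $\Gamma_N$ and $\Gamma_N$ is a simple cubic graph on these eight vertices. Reading off adjacencies from the three edge types gives an outer $4$-cycle $U_0U_1U_2U_3$, an inner $4$-cycle on the $V_j$ (using $r\equiv\pm1\pmod 4$, so $V_j\sim V_{j\pm1}$), and a perfect matching $U_j\sim V_j$; this is by definition $GP(4,1)$, which equals $\K_{4,4}-4\K_2$ as recorded in Theorem \ref{atbic-val3-1}. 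The bipartition $\{U_0,U_2,V_1,V_3\}$, $\{U_1,U_3,V_0,V_2\}$ confirms the identification. The main obstacle throughout is the index bookkeeping for $\sigma$ — in particular confirming both that $\sigma$ preserves the inner edges for each specified $r$ and that $\sigma\rho^4\sigma^{-1}=\rho^4$ — which is elementary but must be done carefully modulo $n$ and modulo $4$; everything else is either immediate or supplied by \cite{FGW-1971}.
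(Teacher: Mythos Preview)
Your proof is correct and follows essentially the same overall line as the paper's: both lean on \cite{FGW-1971} for the generators and relations of $\Aut(\Gamma)$, and both deduce normality of $\langle\rho^4\rangle$ from the commutation $\sigma\rho^4=\rho^4\sigma$ together with $\delta\rho\delta=\rho^{-1}$. You are simply more self-contained in a couple of places---constructing the $2$-colouring explicitly (the paper cites \cite{HPZ-1}) and verifying $\sigma\rho^4=\rho^4\sigma$ by a block-by-block check rather than quoting it as one of the relations listed in \cite{FGW-1971}. The one genuine divergence is in part~(c): you read off the edges among the eight $N$-orbits $U_j,V_j$ and recognise the quotient directly as $GP(4,1)=\K_{4,4}-4\K_2$, whereas the paper argues that $\Gamma_N$ is a $2$-arc-transitive cubic graph on eight vertices and concludes by uniqueness of such a graph. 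Your explicit identification is a little longer but avoids that external uniqueness fact; the paper's version is terser but leans on it.
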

\begin{proof}
By \cite{HPZ-1}, the generalised Petersen graph $GP(n, r)$ is bipartite if and only if $n$ is even and
$r$ is odd, so  $\Gamma$ is bipartite.
By \cite[p.217]{FGW-1971},  $\Gamma$ is   $2$-arc-transitive, and  $A=\langle \rho, \sigma, \delta \rangle$, where   $\rho^{n}=\delta^2=\sigma^3=1$, $\delta \rho \delta =\rho^{-1}$, $\delta \sigma \delta =\sigma^{-1}$, $\sigma \rho \sigma =\rho^{-1}$, $\sigma \rho^4=\rho^4 \sigma$. Thus   $\langle \rho^4  \rangle$ is  a normal subgroup of $A$.
Moreover, by definition,  $A_{u_0}=\langle \sigma, \delta \rangle$.

Note that the two orbits $H_0$ and $H_1$ of $\langle \rho \rangle$ are not the two bipartite halves  of $\Gamma$.
Thus $H_i$ intersects each bipartite half with exactly $n/2$ vertices. Further, $\langle \rho^2 \rangle$ fixes the two bipartite halves of $\Gamma$ setwise.
Let   $N=\langle \rho^4  \rangle$. Then   each $N$-orbit is in a bipartite half,
$\Gamma_N$ has 8 vertices  and has   valency  3, and
it is also 2-arc-transitive. It follows that  $\Gamma_N\cong \K_{4,4}-4\K_2$.
\end{proof}

By \cite[p.217]{FGW-1971}, $GP(10,2)$ has automorphism group $A_5\times S_2$, and   $GP(10,3)$ is the Desargues graph which has automorphism group $S_5\times S_2$.
For these two graphs, choose the normal subgroup $S_2$ of the automorphism group, then the  corresponding quotient graph has 10 vertices and is of valency 3. Moreover, the automorphism group induces
$A_5$ or $S_5$ on the quotient graph, respectively,  and in both cases the quotient graph is the Petersen graph.
The graphs    $\K_4$, $\K_{3,3}$, $GP(4,1)=\K_{4,4}-4\K_2$, $GP(5,2)$ and $G(14,3)$ are in Theorem \ref{bicirculant-reduction-th2}.

Lemmas \ref{dihval35-1} and \ref{gpet-1} now allow us to show how each cubic arc-transitive bicirculant gives rise to a graph in Theorem \ref{bicirculant-reduction-th2}.

\begin{prop}\label{bicirculant-val3}
Let $\Gamma$ be a finite connected arc-transitive  bicirculant of valency $3$.
Then there exists a normal subgroup $N$ of $\Aut(\Gamma)$ such that $\Gamma_N$ is a graph in Theorem \ref{bicirculant-reduction-th2}.
Moreover,  $\Gamma$, $\Gamma_N$, $\Aut(\Gamma)$ (if known), and $N$  are as in Table \ref{table-val3}.
\end{prop}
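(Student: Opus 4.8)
The plan is to invoke Theorem \ref{atbic-val3-1}, which partitions the finite connected arc-transitive bicirculants of valency $3$ into the four families (a)--(d), and then to treat each family in turn, in every case exhibiting a normal subgroup $N\lhd\Aut(\Gamma)$ and identifying $\Gamma_N$ with one of the graphs of Theorem \ref{bicirculant-reduction-th2}. The bulk of the analytic content has already been isolated in Lemmas \ref{dihval35-1} and \ref{gpet-1} together with the automorphism-group data of \cite{FGW-1971,MP-2000,P-2007}, so the proposition reduces to assembling these facts and recording the outcome in Table \ref{table-val3}.

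First I would dispose of the graphs that already occur in Theorem \ref{bicirculant-reduction-th2}. The family (a) graphs $\K_4$ and $\K_{3,3}$, together with $GP(4,1)\cong\K_{4,4}-4\K_2$ and $GP(5,2)$ (the Petersen graph) from (b), the Heawood graph $B(\PG(2,2))$ from (c), and the smallest dihedrant $G(14,3)$, are all themselves basic graphs; for each of these I would simply take $N=1$, so that $\Gamma_N=\Gamma$.

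For the remaining generalised Petersen graphs I would split into two subcases. When $\Gamma=GP(n,r)$ with $(n,r)\in\{(8,3),(12,5),(24,5)\}$, Lemma \ref{gpet-1} directly supplies the characteristic cyclic normal subgroup $N=\langle\rho^4\rangle\cong\mathbb{Z}_{n/4}$ and shows $\Gamma_N\cong\K_{4,4}-4\K_2$. When $\Gamma=GP(10,2)$ or $GP(10,3)$, I would use the explicit automorphism groups $A_5\times S_2$ and $S_5\times S_2$ recorded in \cite{FGW-1971}: taking $N$ to be the direct factor $S_2$ yields, as noted in the discussion preceding this proposition, a cubic arc-transitive quotient on $10$ vertices on which the group induces $A_5$ or $S_5$, and in both cases this quotient is the Petersen graph.

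Finally, for family (d), the dihedrants $\Cay(D_{2n},\{b,ba,ba^{r+1}\})$ with $n\geqslant 11$ odd and $r^2+r+1\equiv 0\pmod n$, I would apply Lemma \ref{dihval35-1} with $k=3$: part (b) guarantees a prime divisor $p$ of $n$ with $3\mid p-1$, and parts (a) and (c) then furnish the normal subgroup $N=\langle a^p\rangle\cong\mathbb{Z}_{n/p}$ together with the identification $\Gamma_N\cong G(2p,3)$, which is a graph of Theorem \ref{bicirculant-reduction-th2}(c). The only point requiring genuine care rather than citation is the bookkeeping for Table \ref{table-val3}, namely recording the full groups $\Aut(\Gamma)$ where known and confirming in each row that $N$ is normal in the \emph{full} automorphism group and not merely in the arc-transitive subgroup used; this is routine, since in every case $N$ is characteristic in a cyclic normal subgroup of $\Aut(\Gamma)$ and hence normal in $\Aut(\Gamma)$.
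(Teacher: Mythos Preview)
Your proposal is correct and follows essentially the same approach as the paper: the proposition is not given a separate formal proof in the text, but is deduced from Theorem \ref{atbic-val3-1} together with the preceding discussion, Lemma \ref{gpet-1} for $GP(8,3),GP(12,5),GP(24,5)$, the direct-factor $S_2$ argument for $GP(10,2)$ and $GP(10,3)$, and Lemma \ref{dihval35-1} for the dihedrants, exactly as you outline. Your remark that $N$ is characteristic in a cyclic normal subgroup of $\Aut(\Gamma)$, and hence normal in $\Aut(\Gamma)$, is a helpful clarification that the paper leaves implicit.
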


\begin{table}[h]\caption{ Cubic arc-transitive bicirculants }\label{table-val3}
\begin{tabular}{|c|c|c|c|c|}
\hline
 $\Gamma$ &  $\Gamma_{N}$ & $\Aut(\Gamma)$ &  $N$       \\
\hline
$\K_4$ &  $\K_4$  & $S_4$ & $1$   \\
\hline
$\K_{3,3}$ & $\K_{3,3}$ & $S_3\wr S_2$ &  $1$           \\
\hline
$\K_{4,4}-4\K_2$ & $\K_{4,4}-4\K_2$ &  $S_4\times S_2$  & $1$           \\
\hline
Heawood graph & Heawood graph  & $\PGL(3,2)$    & $1$   \\
\hline
$GP(5,2)$   & Petersen graph  &  $S_5$   & $1$      \\
 \hline
$GP(8,3)$ & $\K_{4,4}-4\K_2$  &  $GL(2,3):S_2$   & $S_2$    \\
 \hline
$GP(10,2)$  & Petersen graph  & $A_5\times S_2$  & $S_2$       \\
 \hline
$GP(10,3)$  & Petersen graph  &  $S_5\times S_2$   & $S_2$    \\
  \hline
$GP(12,5)$  & $\K_{4,4}-4\K_2$   & $S_4\times S_3$ & $\mathbb{Z}_3$        \\
 \hline
 $GP(24,5)$ & $\K_{4,4}-4\K_2$  & $|\Aut(\Gamma)|=288$    & $\mathbb{Z}_6$     \\
\hline
$\Cay(D_{2n},\{b,ba,ba^{r+1}\})$, & $G(2p,3)$,   & $\mathbb{Z}_n:\mathbb{Z}_6$ &   $\mathbb{Z}_{n/p}$  \\
$n\geqslant 11$ odd, as in Theorem \ref{atbic-val3-1} &  prime $p|n$, $3|p-1$  &    &      \\
\hline
\end{tabular}
\end{table}

Note that in Proposition \ref{bicirculant-val3}, $N$ and $\Gamma_N$ are not necessarily unique.
For instance, if $\Gamma$ is $GP(12,5)$, then its automorphism group is $S_4\times S_3$, and
$\Gamma_N$ is  $\K_4$ whenever $N=S_3$, and $\Gamma_N$ is   $\K_{3,3}$ when $N=\mathbb{Z}_2\times \mathbb{Z}_2$.

\subsection{Arc-transitive bicirculants of valency 5}
\bigskip

Let $L,R$ and $M$ be subsets of an additive group $H:=\mathbb{Z}_n$ such that  $L=-L$, $R=-R$ and $R\cup L$ does not contain the zero  element of $H$. Define the bicirculant  $BC_n[L,M,R]$
to have vertex set the union of the left part $H_0=\{h_0|h\in H\}$ and the right part $H_1=\{h_1|h\in H\}$, and edge set the union of the left edges $\{\{h_0,(h+l)_0\}|l\in L\}$, the
right edges $\{\{h_1,(h+r)_1\}|r\in R\}$ and the \emph{spokes} $\{\{h_0,(h+m)_1\}|m \in M\}$.

\medskip
The family of arc-transitive bicirculants of valency 5 have been classified in  \cite{AHK-2015,AHKOS-2015}.

\begin{theo}\label{atbic-val5-1}
Let $\Gamma$ be a finite connected arc-transitive  bicirculant of valency $5$. Then   $\Gamma$ is one of the following graphs:

 \begin{enumerate}[{\rm (a)}]
\item    $\K_6$;
\item     $\K_{6,6}-6\K_2$;
\item     $B(\PG(2,4))$;
\item   Clebsch graph;
\item     $BC_6[\{\pm 1\},\{0,1,5\},\{\pm 2\}]$;
\item     $BC_{12}[\varnothing,\{0,1,2,4,9\},\varnothing]$;
\item     $BC_{24}[\varnothing,\{0,1,3,11,20\},\varnothing]$;
\item     $\Cay(D_{2n},\{b,ba,ba^{r+1},ba^{r^2+r+1},ba^{r^3+r^2+r+1}\})$, where   $D_{2n}=\langle a,b|a^n=b^2=(ba)^2=1 \rangle$,   $r \in \mathbb{Z}_n^*$ such that  $r^4+r^3+r^2+r+1 \equiv 0 \pmod n$.
 \end{enumerate}

\end{theo}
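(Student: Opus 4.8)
The plan is to derive the classification from the prime-valency reduction already established in this paper, rather than to re-run the full local analysis from scratch. Since $5$ is prime, Corollary \ref{th-cor-1} applies with $p=5$: every connected arc-transitive bicirculant $\Gamma$ of valency $5$ is a normal \emph{cover} of one of the basic graphs $\K_{5,5}$, $\K_6$, $\K_{6,6}-6\K_2$, $G(2q,5)$ with $q$ prime and $5\mid q-1$, $B(\PG(2,4))$ (the only $B(\PG(n,q))$ of valency $(q^n-1)/(q-1)=5$, occurring at $q=4$, $n=2$), or the Clebsch graph. Theorem \ref{atbic-val5-1} therefore splits into two tasks: (i) decide which of these basic graphs are themselves arc-transitive bicirculants of valency $5$, and (ii) determine \emph{all} of their arc-transitive bicirculant normal covers of valency $5$, exhibiting each explicitly as a $BC_n[L,M,R]$ or as a Cayley graph of a dihedral group.

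For task (i) I would record the data as $\Gamma=BC_n[L,M,R]$ over a semiregular $H\cong\mathbb{Z}_n$ with orbits $H_0,H_1$, and split according to whether $H_0,H_1$ are the two bipartite halves (equivalently $L=R=\varnothing$) or not. When they are, the bi-quasiprimitive reduction of Propositions \ref{bicirc-biqp-lem2} and \ref{bicir-biqp-2}, together with the list of arc-transitive graphs of order $2p$ in Theorem \ref{bicirc-2p-1}, isolates, apart from the complete bipartite graph $\K_{5,5}$, the bipartite basic graphs $\K_{6,6}-6\K_2$, $B(\PG(2,4))$ and $G(2q,5)$. When the $H$-orbits are not the bipartite halves (the non-bipartite case, where the orbits induce cycles), the vertex-quasiprimitive classification of Proposition \ref{bicirculant-quasiprimitive-th1} singles out $\K_6$ and the Clebsch graph. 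Each of these is readily written in $BC$-form, accounting for items (a)--(d) of the statement.

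Task (ii) is where essentially all the difficulty lies, and is precisely the content of \cite{AHK-2015,AHKOS-2015}. For an arc-transitive graph of valency $5$ the vertex stabiliser $G_v$ is a $\{2,3,5\}$-group of bounded order (the amalgam/local analysis for pentavalent symmetric graphs); combining this with the requirement that $H\cong\mathbb{Z}_n$ be semiregular with exactly two orbits severely restricts $n$ and the connection sets $L,M,R$. The covers of $G(2q,5)$ yield the infinite dihedrant family $\Cay(D_{2n},\{b,ba,ba^{r+1},ba^{r^2+r+1},ba^{r^3+r^2+r+1}\})$, governed by $r^4+r^3+r^2+r+1\equiv0\pmod n$, exactly as in Lemma \ref{dihval35-1} for $k=5$; the remaining covers reduce to the finitely many sporadic graphs $BC_6[\{\pm1\},\{0,1,5\},\{\pm2\}]$, $BC_{12}[\varnothing,\{0,1,2,4,9\},\varnothing]$ and $BC_{24}[\varnothing,\{0,1,3,11,20\},\varnothing]$, located by a finite computation. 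The main obstacle, then, is not the reduction (which our framework already supplies) but this cover-enumeration: for each basic graph one must determine the admissible connection sets and then verify that no covers beyond those listed arise, which is the delicate local-structure-plus-search argument carried out in the cited classifications.
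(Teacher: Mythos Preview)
The paper does not prove Theorem \ref{atbic-val5-1}; it is quoted from \cite{AHK-2015,AHKOS-2015} as an external classification, and Section 6 only checks the converse direction, namely that each graph in that list maps to one of the basic graphs of Theorem \ref{bicirculant-reduction-th2} (this is Proposition \ref{bicirculant-val5}). So there is no ``paper's own proof'' to compare against.

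Your proposal attempts the opposite direction: deduce the valency-$5$ list from Corollary \ref{th-cor-1}. The reduction to the six basic graphs is valid, but, as you yourself concede, the substantive step --- enumerating all arc-transitive normal covers of each basic graph and then isolating among them those admitting a semiregular cyclic automorphism with two orbits --- is exactly the content of \cite{AHK-2015,AHKOS-2015}, and you defer to those references for it. This is therefore not an independent proof but a repackaging of the same input. Note also that invoking Lemma \ref{dihval35-1} for the dihedrant family is circular: its proof for $k=5$ appeals to \cite[Lemma 3.7 and Theorem 3.11]{AHK-2015}, which is part of the very classification you are trying to establish. If you genuinely want to derive Theorem \ref{atbic-val5-1} from the paper's framework, you would still have to (i) determine, for each of the six basic graphs, all of its connected arc-transitive normal covers via the pentavalent amalgam bounds, and (ii) test each cover for a cyclic semiregular automorphism with two orbits; neither step is supplied by Corollary \ref{th-cor-1}, and carrying them out is precisely the work done in the cited papers.
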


The complete graph $\K_6$ is isomorphic to the graph $BC_3[\{\pm 1\},\{0,1,2\},\{\pm 1\}]$  in \cite[Theorem 1.1]{AHK-2015}. Remark 1.2 of  \cite{AHK-2015} indicates that  $\K_{6,6}-6\K_2\cong BC_6[\varnothing,\{0,1,2,3,4\},\varnothing]\cong BC_6[\{\pm 1,3\},$ $\{0,2\},\{\pm 1,3\}]\cong BC_6[\{\pm 1\},$ $\{0,2,4\},\{\pm 1\}]$, and
$ B(\PG(2,4))\cong BC_{21}[\varnothing,$ $\{0,1,4,14,16\},\varnothing]$.
It can be  checked  by Magma \cite{Magma-1997}  that the Clebsch graph is the graph $BC_8[\{\pm 1,4\},\{0,2\},\{\pm 3,4\}]$ in \cite[Theorem 1.1]{AHK-2015}.
The graph $\K_{5,5}$ is the Cayley graph $\Cay(D_{10},\{b,ba,ba^{2},ba^{3},ba^{4}\})$ in Theorem \ref{atbic-val5-1} (h).

\medskip

The graphs $\K_6$, $\K_{6,6}-6\K_2$,  $B(\PG(2,4))$ and the Clebsch graph are in Theorem \ref{bicirculant-reduction-th2}. Let $\Gamma=BC_{12}[\varnothing,\{0,1,2,4,9\},\varnothing]$ and  $\Gamma'=BC_{24}[\varnothing,\{0,1,3,11,20\},\varnothing]$. Then by \cite[Theorem 3.11,p.666]{AHK-2015},
$\Aut(\Gamma)$ has a normal subgroup $M\cong \mathbb{Z}_2$
 and $\Aut(\Gamma')$ has a normal subgroup $N\cong \mathbb{Z}_4$ such that
$\Gamma_M\cong \Gamma'_N\cong \K_{6,6}-6\K_2$.
It can be easily checked  using Magma \cite{Magma-1997}  that    the automorphism group of $BC_6[\{\pm 1\},\{0,1,5\},\{\pm 2\}]$ is $\PSL(2,5)\times S_2$.

Now we  use Lemma \ref{dihval35-1} to show how each valency 5 arc-transitive bicirculant gives rise to a graph in
Theorem \ref{bicirculant-reduction-th2}.

\begin{table}[h]\caption{ Pentavalent arc-transitive bicirculants }
\begin{tabular}{|c|c|c|c|c|}
\hline
 $\Gamma$ &  $\Gamma_{N}$ & $\Aut(\Gamma)$ &  $N$     \\
\hline
$\K_6$ & $\K_6$ &  $S_6$   &  $1$      \\
\hline
 $\K_{6,6}-6\K_2$ &   $\K_{6,6}-6\K_2$ &  $S_6\times S_2$   & $1$      \\
 \hline
$B(\PG(2,4))$ & $B(\PG(2,4))$  & $P\Gamma L(3,2):S_2$    & $1$     \\
\hline
Clebsch graph &  Clebsch graph & $\mathbb{Z}_2^4:S_5$ &  $1$          \\
 \hline
$BC_6[\{\pm 1\},\{0,1,5\},\{\pm 2\}]$ & $\K_6$  & $PSL(2,5)\times S_2$   &  $S_2$     \\
 \hline
$BC_{12}[\varnothing,\{0,1,2,4,9\},\varnothing]$ & $\K_{6,6}-6\K_2$  &  $|\Aut(\Gamma)|=480$  &  $\mathbb{Z}_2$     \\
 \hline
$BC_{24}[\varnothing,\{0,1,3,11,20\},\varnothing]$  &  $\K_{6,6}-6\K_2$  &  $|\Aut(\Gamma)|=960$  &  $\mathbb{Z}_4$     \\
\hline
$\Cay(D_{2n},\{b,ba,ba^{r+1},ba^{r^2+r+1},ba^{r^3+r^2+r+1}\})$, & $G(2p,5)$,   &   & $\mathbb{Z}_{n/p}$          \\
 as in Theorem \ref{atbic-val5-1}& prime $p|n$, $5|p-1$ &   &          \\
\hline
\end{tabular}
\label{table-val5}
\end{table}

\begin{prop}\label{bicirculant-val5}
Let $\Gamma$ be a finite connected arc-transitive  bicirculant of valency $5$.
Then there exists a normal subgroup $N$ of $\Aut(\Gamma)$ such that $\Gamma_N$ is a graph in Theorem \ref{bicirculant-reduction-th2}.
Moreover,  $\Gamma$, $\Gamma_N$, $\Aut(\Gamma)$ (if known), and $N$
are as in Table \ref{table-val5}.

\end{prop}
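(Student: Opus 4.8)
The plan is to argue by a case analysis over the classification of pentavalent arc-transitive bicirculants given in Theorem \ref{atbic-val5-1}, treating each of the families (a)--(h) in turn and exhibiting, in each case, the normal subgroup $N\lhd\Aut(\Gamma)$ and the quotient $\Gamma_N$ recorded in Table \ref{table-val5}. Since Theorem \ref{atbic-val5-1} guarantees that every such $\Gamma$ is one of these graphs, it suffices to verify one row of the table for each family.

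First I would dispatch the four graphs in cases (a)--(d), namely $\K_6$, $\K_{6,6}-6\K_2$, $B(\PG(2,4))$ and the Clebsch graph. Each already occurs in the list of Theorem \ref{bicirculant-reduction-th2}, so one simply takes $N=1$ and $\Gamma_N=\Gamma$, with automorphism groups $S_6$, $S_6\times S_2$, $\PGammaL(3,2){:}S_2$ and $\mathbb{Z}_2^4{:}S_5$ as stated. Next I would treat the three sporadic graphs in cases (e)--(g). For $BC_6[\{\pm 1\},\{0,1,5\},\{\pm 2\}]$, the computation recorded before the statement gives $\Aut(\Gamma)\cong\PSL(2,5)\times S_2$; taking $N$ to be the $S_2$ direct factor, the quotient has $6$ vertices of valency $5$ on which $\PSL(2,5)$ acts $2$-transitively, forcing $\Gamma_N\cong\K_6$. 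For $BC_{12}[\varnothing,\{0,1,2,4,9\},\varnothing]$ and $BC_{24}[\varnothing,\{0,1,3,11,20\},\varnothing]$, the discussion preceding the statement, via \cite[Theorem 3.11]{AHK-2015}, supplies normal subgroups $N\cong\mathbb{Z}_2$ and $N\cong\mathbb{Z}_4$ respectively, each with $\Gamma_N\cong\K_{6,6}-6\K_2$, the orders $|\Aut(\Gamma)|=480$ and $960$ being confirmed by Magma \cite{Magma-1997}.

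Finally, for the dihedral family in case (h) I would split off the degenerate value $n=5$, where the graph is $\K_{5,5}=\Cay(D_{10},\{b,ba,ba^2,ba^3,ba^4\})$, which lies in Theorem \ref{bicirculant-reduction-th2}(a), so we take $N=1$. For $n\geqslant 11$ I would invoke Lemma \ref{dihval35-1} with $k=5$: that lemma furnishes a prime divisor $p$ of $n$ with $5\mid(p-1)$, shows that $N=\langle a^p\rangle\cong\mathbb{Z}_{n/p}$ is normal in $\Aut(\Gamma)$, and identifies $\Gamma_N\cong G(2p,5)$, a graph appearing in Theorem \ref{bicirculant-reduction-th2}(c). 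Assembling these cases establishes every row of Table \ref{table-val5}.

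The genuinely non-routine part is the identification of the quotients in the three sporadic cases (e)--(g): here one must confirm both the isomorphism type of $\Gamma_N$ and the normality of $N$ in the \emph{full} automorphism group, rather than merely in the arc-transitive group under consideration. These facts rest on the explicit automorphism-group computations and on \cite[Theorem 3.11]{AHK-2015}, so the crux of the proof is marshalling those external inputs correctly; the remaining families reduce either to membership in Theorem \ref{bicirculant-reduction-th2} or to a direct application of Lemma \ref{dihval35-1}.
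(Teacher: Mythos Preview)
Your proposal is correct and follows essentially the same approach as the paper: the paper's argument is precisely the discussion preceding the proposition, which (i) observes that the graphs in (a)--(d) already lie in Theorem~\ref{bicirculant-reduction-th2} so one may take $N=1$, (ii) handles $BC_{12}$ and $BC_{24}$ via \cite[Theorem~3.11]{AHK-2015}, (iii) computes $\Aut(BC_6[\{\pm1\},\{0,1,5\},\{\pm2\}])\cong\PSL(2,5)\times S_2$ with Magma, and (iv) invokes Lemma~\ref{dihval35-1} for the dihedral family. Your explicit separation of the degenerate case $n=5$ (yielding $\K_{5,5}$, which is covered by Theorem~\ref{bicirculant-reduction-th2}(a) with $N=1$ rather than by Lemma~\ref{dihval35-1}) is a useful clarification that the paper leaves implicit in the table.
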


\subsection{Arc-transitive bicirculants of valency 4}
We conclude with a brief discussion of the valency four case.  By Theorem \ref{bicirculant-reduction-th2}, such a graph is either a cover of one of the graphs of valency four listed or a 2-cover of one of the graphs of valency two listed. Note that a connected graph of valency two is a cycle and appears in Theorem \ref{bicirculant-reduction-th2} as $\Cay(p,2)$ for $p$ odd, or as $\K_{2,2}$.  The graphs of valency four listed in Theorem \ref{bicirculant-reduction-th2}  are
\begin{enumerate}[(a)]
\item $\K_{4,4}$;
\item $\K_5$;
\item $\K_{5,5}-5\K_2$;
\item $\K_{3[2]}$;
\item $G(2p,4)$ with $p\equiv 1\pmod 4$;
\item $B(\PG(2,3))$;
\item $B'(\PG(2,2)$;
\item $\Cay(p,4)$ with $p\equiv 1\pmod 4$.
\end{enumerate}

All finite arc-transitive bicirculants of valency 4 were classified in \cite{KKMW-2012}.

\end{document}